\newcommand{\lsc}{\text{LSC}}
\newcommand{\usc}{\text{USC}}
\renewcommand{\bar}[1]{{\overline{#1}}}
\newcommand{\F}{{\mathcal F}}
\newcommand{\R}{\mathbb{R}}
\newcommand{\Z}{\mathbb{Z}}
\newcommand{\vb}[1]{\mathbf{#1}}
\newcommand{\eps}{\varepsilon}
\renewcommand{\tilde}[1]{\widetilde{#1}}
\renewcommand{\phi}{\varphi}
\renewcommand{\t}[1]{\times 10^{-#1}}
\renewcommand{\subset}{\subseteq}
\theoremstyle{plain}
\newtheorem{theorem}{Theorem}
\newtheorem*{theorem*}{Theorem}
\newtheorem{corollary}[theorem]{Corollary}
\newtheorem{lemma}[theorem]{Lemma}
\newtheorem{proposition}[theorem]{Proposition}
\theoremstyle{definition}
\newtheorem{definition}[theorem]{Definition}
\theoremstyle{remark}
\newtheorem{remark}[theorem]{Remark}
\numberwithin{equation}{section}
\numberwithin{theorem}{section}
\begin{document}

\title{Numerical schemes and rates of convergence for the Hamilton-Jacobi equation continuum limit of nondominated sorting\thanks{The research described in this paper was partially supported by NSF grants DMS-1500829 and DMS-0914567. Part of this work was completed while the author was supported by a Rackham Predoctoral Fellowship.}}

\author{Jeff Calder\thanks{Department of Mathematics, University of California, Berkeley. ({\tt jcalder@berkeley.edu})}}
\maketitle
\begin{abstract}
Non-dominated sorting arranges a set of points in $n$-dimensional Euclidean space into layers by repeatedly removing the coordinatewise minimal elements. It was recently shown that nondominated sorting of random points has a Hamilton-Jacobi equation continuum limit. The obvious numerical scheme for this PDE has a slow convergence rate of $O(h^\frac{1}{n})$. In this paper, we introduce two new numerical schemes that have formal rates of $O(h)$ and we prove the usual $O(\sqrt{h})$ theoretical rates. We also present the results of numerical simulations illustrating the difference between the formal and theoretical rates. 
\end{abstract}

\section{Introduction}
\label{sec:intro}

In this paper, we introduce new finite difference schemes for the Hamilton-Jacobi equation
\begin{equation}\label{eq:Pintro}
\left.
\begin{aligned}
u_{x_1} \cdots u_{x_n} &= f& &\text{in } \R^n_+\\
u &=0& &\text{on } \partial\R^n_+, 
\end{aligned}\right\}
\end{equation}
and prove rates of convergence. 

The Hamilton-Jacobi equation \eqref{eq:Pintro} appeared recently as the continuum limit of nondominated sorting, which is widely used in scientific and engineering contexts~\cite{calder2014}. Let us briefly describe the connection. Let $X_1,\dots,X_N$ be \emph{i.i.d.}~random variables on $\R^n_+$ with continuous density $f$. Let $\F_1$ denote the elements in $S:=\{X_1,\dots,X_N\}$ that are coordinatewise minimal. The set $\F_1$ is called the \emph{first Pareto front} of $S$, and the elements of $\F_1$ are called \emph{Pareto optimal} or \emph{nondominated}. The \emph{second Pareto front}, denoted  $\F_2$, is the set of  minimal elements from $S \setminus \F_1$, and the $k^{\rm th}$ \emph{Pareto front} is defined as
\[\F_k = \text{Minimal elements of } S \setminus \bigcup_{i < k} \F_i.\]
The process of sorting the set $S$ into Pareto fronts, or nondominated layers, is called \emph{nondominated sorting}, and is widely used in multi-objective optimization (see \cite{deb2002,calder2014} and references therein), with recent applications to machine learning~\cite{hsiao2015,hsiao2015b,hsiao2012}. It turns out that nondominated sorting is equivalent to finding the longest chain in a partially ordered set, which has a long history in probability and combinatorics~\cite{ulam1961,hammersley1972,viennot1984,bollobas1988,deuschel1995,felsner1999,prahofer2000}. It was shown in \cite{calder2014} that the Pareto fronts converge almost surely in the limit as $N \to \infty$ to the level sets of the unique nondecreasing\footnote{We say that $u:\Omega\subset \R^n \to \R$ is \emph{nondecreasing} if $x_i \mapsto u(x)$ is nondecreasing for all $i$.}  viscosity solution of \eqref{eq:Pintro}. Figure \ref{fig:demo} gives an illustration of this continuum limit.

\begin{figure}
\centering
\subfigure[\emph{i.i.d.}~samples]{\includegraphics[width=0.32\textwidth,clip=true,trim = 55 30 50 27]{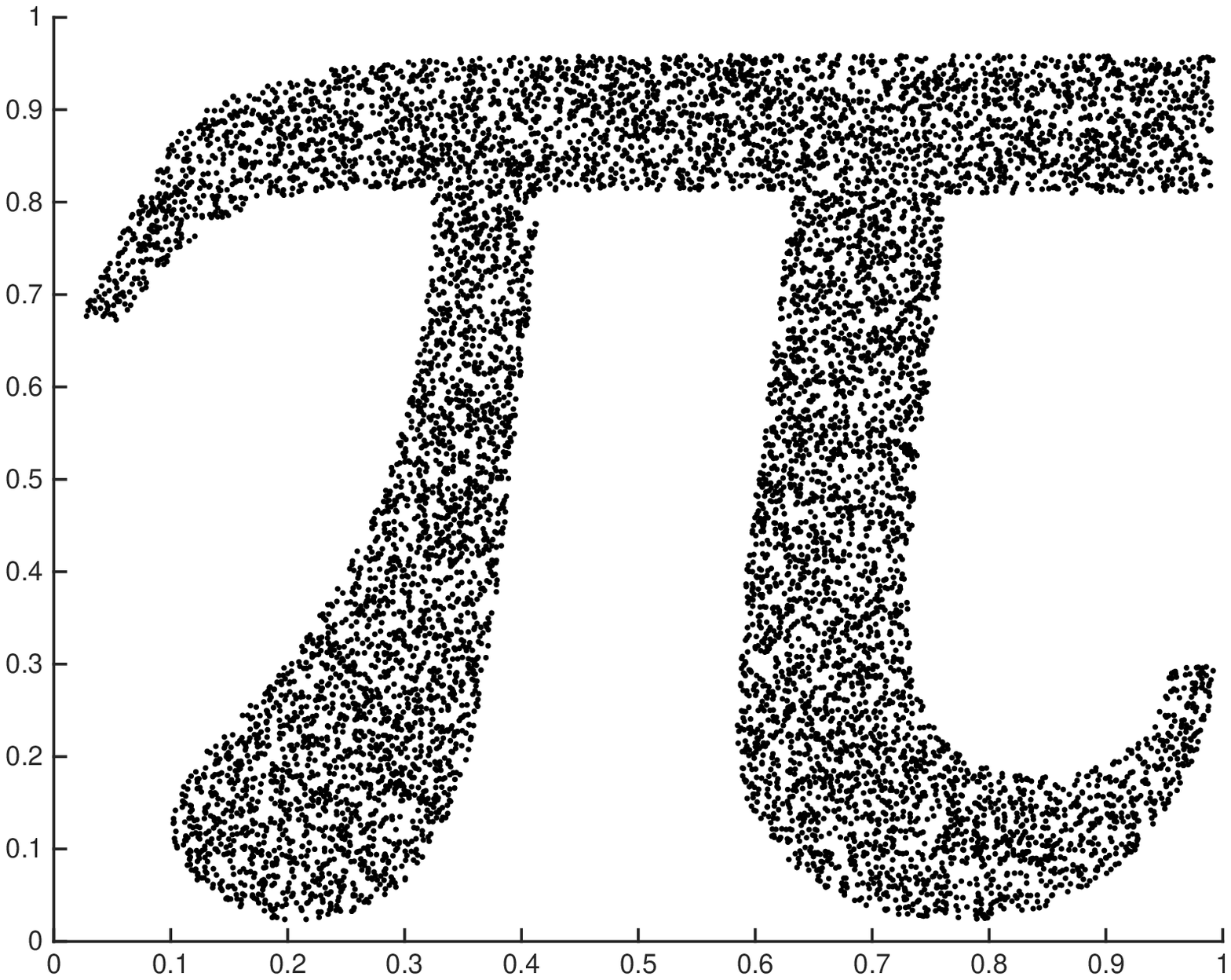}}
\subfigure[$N=10^4$ samples]{\includegraphics[width=0.32\textwidth,clip=true,trim = 55 30 50 27]{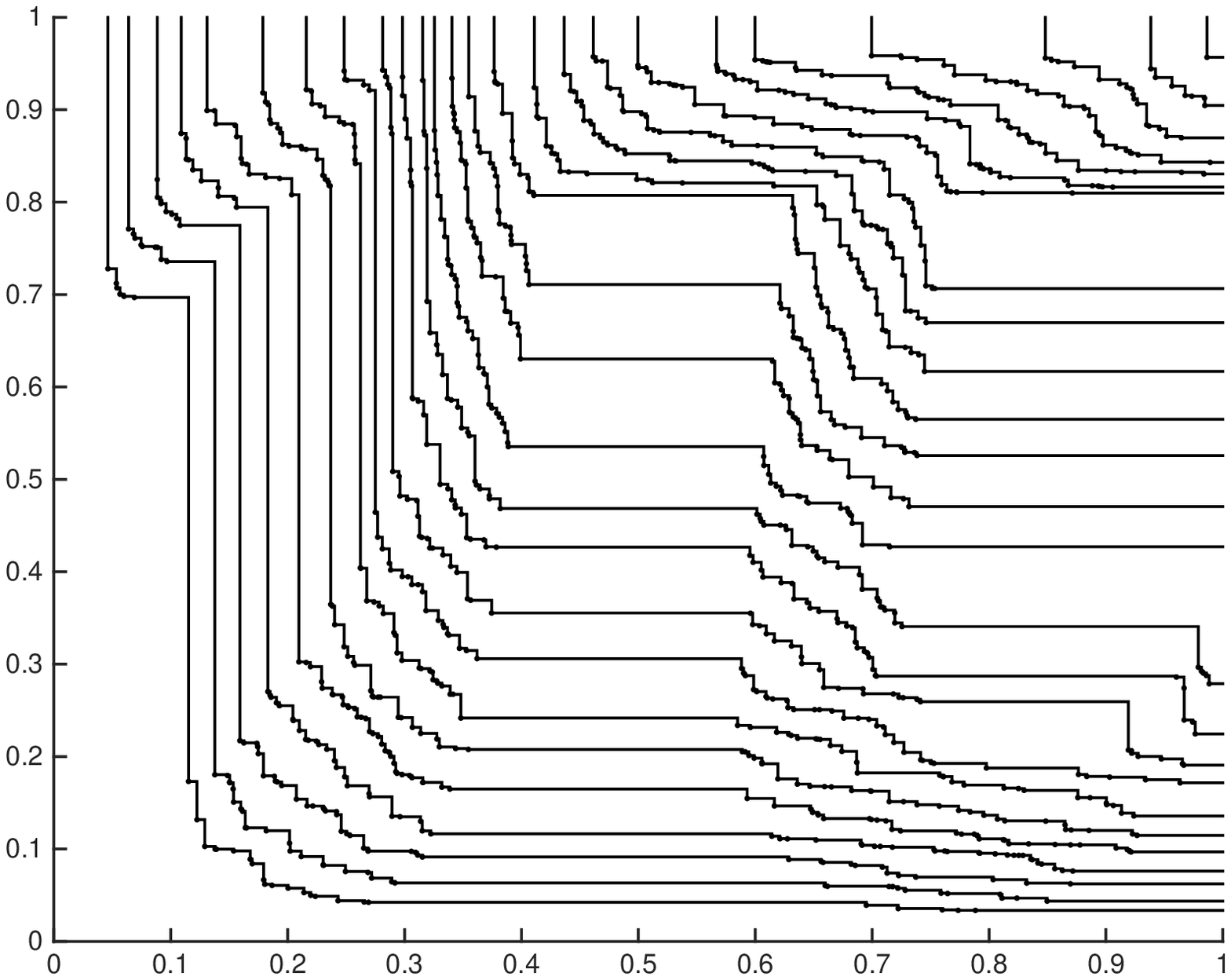}}
\subfigure[$N=10^6$ samples]{\includegraphics[width=0.32\textwidth,clip=true,trim = 55 30 50 27]{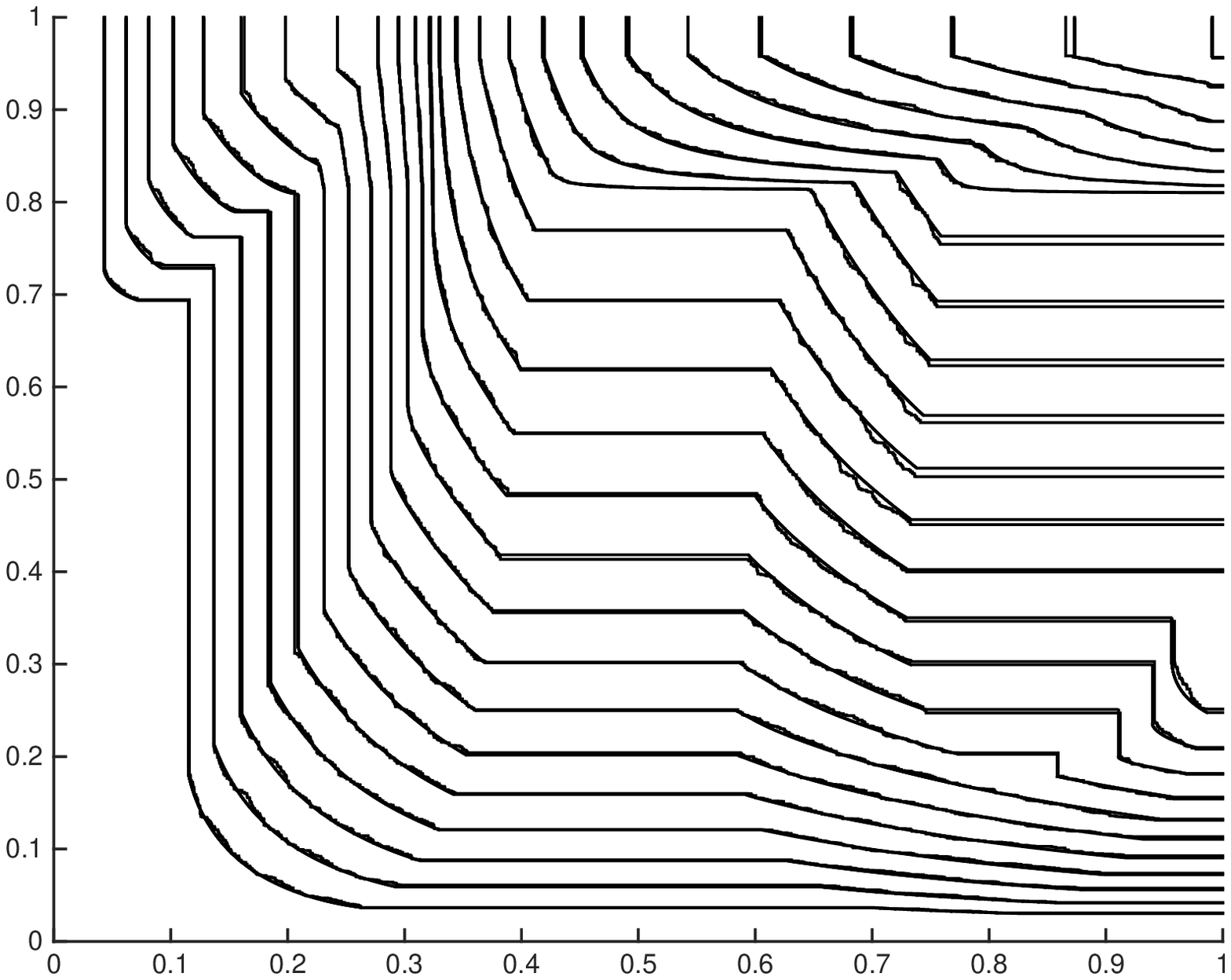}}
\caption{A simulation illustrating that \eqref{eq:Pintro} is the continuum limit of nondominated sorting. The distribution of $X_1,\dots,X_N$ is depicted in (a). In (b) we show 25 of the nondominated layers obtained by sorting $N=10^4$ \emph{i.i.d.}~samples, and in (c) we compare the layers for $N=10^6$ samples against the level sets of the viscosity solution of \eqref{eq:Pintro}.}
\label{fig:demo}
\end{figure}

In \cite{calder2015pde}, a fast algorithm called \emph{PDE-based ranking} was proposed for approximate nondominated sorting of large datasets. The basic idea is to estimate the density $f$ from a (relatively small) subset of $X_1,\dots,X_N$, and then use the numerical solution of the Hamilton-Jacobi equation \eqref{eq:Pintro} as an approximation of nondominated sorting. It was shown in \cite{calder2015pde} that PDE-based ranking is substantially faster than nondominated sorting in relatively low dimensions, while maintaining high levels of sorting accuracy. The numerical scheme for \eqref{eq:Pintro} used in \cite{calder2015pde} is based on backward finite differences and can be solved very efficiently in a single pass. Due to the fact that information flows along coordinate axes in the definition of nondominated sorting, this scheme is upwind (or monotone), and convergence of the scheme was established in \cite{calder2015pde}. 

Although the scheme used in \cite{calder2015pde} is simple and efficient, it suffers from poor accuracy, with formal rates of convergence on the order of $O(h^\frac{1}{n})$ for a grid with spacing $h>0$ in dimension $n$. In this paper, we propose two new and highly efficient finite difference schemes for solving \eqref{eq:Pintro}. Both schemes have a formal accuracy of $O(h)$ when the solution is smooth, and we prove the usual $O(\sqrt{h})$ rates in the context of non-smooth viscosity solutions. These schemes can be used to increase the accuracy of PDE-based ranking~\cite{calder2015pde} \emph{without} increasing computational complexity. Alternatively, with these highly accurate schemes we can afford to use a coarser grid resolution, and thus we can reduce the computational complexity of PDE-based ranking while maintaining high accuracy. This is particularly important in applications of nondominated sorting~\cite{hsiao2015,hsiao2015b,deb2002}, which will benefit from highly accurate and efficient algorithms for sorting massive datasets. We detail the new schemes and our main results in the next section.

\subsection{Main results}
\label{sec:main}

We pose the Hamilton-Jacobi equation \eqref{eq:Pintro} on a compact domain as follows:
\begin{equation}\tag{P1}\label{P1}
\left.
\begin{aligned}
(u_{x_1})_+ \cdots (u_{x_n})_+ &= f& &\text{in } (0,1]^n\\
u &=0& &\text{on } \Gamma, 
\end{aligned}\right\}
\end{equation}
where $\Gamma := \partial [0,1]^n \setminus (0,1]^n$ and $a_+ := \max(a,0)$. Notice we have modified \eqref{eq:Pintro} by taking the positive parts of the partial derivatives. This is necessary to obtain existence of a viscosity solution of \eqref{P1}, and is a well-known issue with viscosity solutions on boundaries of domains (see \cite{crandall1992}). We elaborate on this briefly in Section \ref{sec:comparison}. We should mention that there is no loss of generality in considering the domain $[0,1]^n$ in \eqref{P1}. Indeed, we can make a simple scaling argument to transform the domain of \eqref{P1} into $\prod_{i=1}^n [0,x_i]$ for any $x \in \R^n_+$. 

Let $h>0$, $\Z_h = \{hk \, : \, k \in \Z\}$, and $\Z^n_h = (\Z_h)^n$. For $\Omega \subset \R^n$ set $\Omega_h = \Omega \cap \Z_h^n$. We recall the numerical scheme for \eqref{P1} from \cite{calder2015pde}: 
\begin{equation}\tag{S1}\label{S1}
\left.
\begin{aligned}
(D^-_1u_h(x))_+ \cdots (D^-_nu_h(x))_+ &= f(x)& &\text{if } x \in (0,1]^n_h\\
u_h(x) &=0& &\text{if } x \in \Gamma_h,
\end{aligned}\right\}
\end{equation}
where $u_h:[0,1]^n_h \to \R$ is the numerical solution and 
\[D^\pm_iu_h(x) = \pm\frac{u_h(x\pm he_i)-u_h(x)}{h}.\]
The solution $u_h$ of \eqref{S1} can be solved efficiently in a single pass, which is reminiscent of the fast marching \cite{sethian1996fast} and fast sweeping \cite{zhao2005fast} algorithms. In dimension $n=2$, the scheme is quadratic and can be solved in closed form
\begin{equation}\label{eq:S1closed}
u_h(x) =\frac{u_h(x-he_1) + u_h(x-he_2)}{2} + \frac{1}{2}\sqrt{(u_h(x-he_1) - u_h(x-he_2))^2 + 4h^2f(x)^2 }. 
\end{equation}
In dimensions $n\geq 3$, the scheme can be solved via any iteration method, such as a bisection search.
Convergence of \eqref{S1} to the viscosity solution of \eqref{P1} was established in \cite{calder2015pde}.

While \eqref{S1} is optimal in terms of computational complexity on a fixed grid, its accuracy is at best $O(h^\frac{1}{n})$. To see this, consider the special case of $f\equiv 1$ and $u(x) = n(x_1\cdots x_n)^\frac{1}{n}$. The solution $u$ is smooth on $(0,1]^n$, and has a gradient singularity on the boundary $\Gamma$. We can use the comparison principle \cite{calder2015pde} for \eqref{P1} to show that in general
\[n(x_1\cdots x_n)^\frac{1}{n} \inf_{[0,1]^n} f^\frac{1}{n} \leq u(x) \leq n(x_1\cdots x_n)^\frac{1}{n} \sup_{[0,1]^n} f^\frac{1}{n}.\]
Therefore, the gradient singularity on $\Gamma$ is typical for solutions of \eqref{P1} whenever $\inf_{[0,1]^n} f > 0$. Let $\phi(x) = Cn(x_1\cdots x_n)^\frac{1}{n}$. By the concavity of $\phi$
\[D^-_i\phi(x) \geq \phi_{x_i}(x) = C(x_1\cdots x_n)^\frac{1}{n} x_i^{-1}.\]
On the other hand, if $x_i=h$ then 
\[D^-_i\phi(x) = \frac{\phi(x)}{h} = Cn(x_1\cdots x_n)^\frac{1}{n} x_i^{-1}.\]
Therefore, for any $x \in (0,1]^n_h$ such that $x_i=h$ for some $i$ we have
\[D^-_1\phi(x) \cdots D^-_n\phi(x) \geq nC^n.\]
Setting $C=n^{-\frac{1}{n}}$ and invoking the comparison principle for \eqref{S1} yields
\[u_h(x) \leq \phi(x) =  n^{1-\frac{1}{n}}(x_1\cdots x_n)^\frac{1}{n} \ \ \text{whenever } x_i = h \text{ for some } i.\]
Letting $x=(h,1,\dots,1)$ we find that
\[u_h(x) \leq n^{1-\frac{1}{n}}h^\frac{1}{n} = u(x) - n(1-n^{-\frac{1}{n}}) h^\frac{1}{n}.\]
Therefore, the scheme \eqref{S1} makes an error on the order of $O(h^\frac{1}{n})$ in the immediate vicinity of the boundary $\Gamma$.  Since $u$ is generally not smooth, the best theoretical rate that one can prove in the context of viscosity solutions is typically strictly worse than the formal rate (see \cite{crandall1984two,souganidis1985approximation, deckelnick2004}). In Section \ref{sec:numerics}, we show numerical results indicating that the $\ell^\infty$ convergence rate of $O(h^\frac{1}{n})$ is typically observed in practice.

Since the slow convergence rate is caused by a singularity in the gradient of $u$ on $\Gamma$, it is natural to look for a transformation of $u$ that removes this singularity. With this in mind, we set $v = u^n/n^n$ where $u$ is the nondecreasing viscosity solution of \eqref{P1}. When $f\equiv1$ we have $v(x) = x_1\cdots x_n$, which is Lipschitz continuous (in fact smooth) on $[0,1]^n$. In general, we prove in Lemma \ref{lem:vexun} that $v \in C^{0,1}([0,1]^n)$ whenever $f^\frac{1}{n} \in C^{0,1}([0,1]^n)$. Furthermore, $v$ is a viscosity solution of 
\begin{equation}\tag{P2}\label{P2}
\left.
\begin{aligned}
(v_{x_1})_+ \cdots (v_{x_n})_+ &= v^{n-1}f& &\text{in } (0,1]^n\\
v &=0& &\text{on } \Gamma.
\end{aligned}\right\}
\end{equation}
Since $v$ is Lipschitz continuous, it is reasonable to suspect that a numerical scheme for \eqref{P2} would have a better convergence rate than \eqref{S1}. We therefore propose the following finite difference scheme for \eqref{P2}:
\begin{equation}\tag{S2}\label{S2}
\left.
\begin{aligned}
(D^-_1v_h(x))_+ \cdots (D^-_nv_h(x))_+ &= v_h(x)^{n-1} f(x)& &\text{if } x \in (0,1]^n_h\\
v_h(x) &=0& &\text{if } x \in \Gamma_h,
\end{aligned}\right\}
\end{equation}
where $v_h:[0,1]^n_h \to \R$. We take $v_h(x)$ to be the largest solution of \eqref{S2} at each $x \in (0,1]^n_h$. It is interesting to note that when $f$ is constant, the solution of the scheme \eqref{S2} is $v_h(x) = cx_1\cdots x_n$, which is the \emph{exact} solution of \eqref{P2}. 

The scheme can be solved efficiently in a single pass, similar to \eqref{S1}, and in dimension $n=2$ we have the closed form expression
\begin{equation}\label{eq:S2closed}
v_h(x) =\frac{A + h^2f(x)}{2} + \frac{1}{2}\sqrt{B^2 + 2h^2f(x)A +  h^4f(x)^2 },
\end{equation}
where
\[A = v_h(x-he_1) + v_h(x-he_2) \ \ \text{ and } \ \ B = v_h(x-he_1) - v_h(x-he_2).\]
In Theorem \ref{thm:convPv}, we prove convergence of \eqref{S2} when $f$ is continuous and nonnegative.  Our main result is the following convergence rate.
\begin{theorem}\label{thm:ratev}
Suppose $f \in C^{0,1}([0,1]^n)$ and $f > 0$. Let $v_h$ be the solution of \eqref{S2} and let $u$ be the nondecreasing viscosity solution of \eqref{P1}. Then 
\begin{equation}\label{eq:ratev}
|n^nv_h(x) -u(x)^n| \leq C\sqrt{h} \ \ \text{ for all } x \in [0,1]^n_h,
\end{equation}
and 
\begin{equation}\label{eq:ratev2}
|nv_h(x)^\frac{1}{n} -u(x)| \leq C\delta^{1-n}\sqrt{h} \ \ \text{ for all } x \in [\delta,1]^n_h,
\end{equation}
where $\delta>0$ and $C = C\big(n,[f]_{1;[0,1]^n},\inf_{[0,1]^n}f\big)$.
\end{theorem}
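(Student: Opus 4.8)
The plan is to prove the rate for the transformed problem \eqref{P2} first, i.e.\ to establish \eqref{eq:ratev} directly, and then derive \eqref{eq:ratev2} by a routine argument using the relation $v=u^n/n^n$ together with the lower bound $u(x)\ge n(x_1\cdots x_n)^{1/n}\inf f^{1/n}$ and the Lipschitz bound on $t\mapsto t^{1/n}$ away from zero (which is where the $\delta^{1-n}$ factor enters, since $(t^{1/n})' = \frac1n t^{1/n-1}$ and $v(x)\gtrsim \delta^n$ on $[\delta,1]^n$). So the heart of the matter is \eqref{eq:ratev}: $|n^n v_h(x) - u(x)^n|\le C\sqrt h$, where we compare $v_h$ (the largest solution of \eqref{S2}) against the viscosity solution $v:=u^n/n^n$ of \eqref{P2}, which by Lemma~\ref{lem:vexun} lies in $C^{0,1}([0,1]^n)$ precisely because $f^{1/n}\in C^{0,1}$ (guaranteed here since $f\in C^{0,1}$ and $f>0$).

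The standard route is a Crandall--Lions-type doubling-of-variables argument adapted to the monotone scheme. First I would record the two one-sided bounds. For the upper bound $n^n v_h \le v + C\sqrt h$: take $\phi$ a smooth test function touching $v$ from above at an interior point, mollify and use consistency of the backward differences $D_i^-$ together with the Lipschitz regularity of $v$ to show that $\phi$ is a supersolution of the scheme \eqref{S2} up to an $O(h)$ error in the smooth region and an $O(\sqrt h)$ error near $\Gamma$ (this is the usual place where the boundary singularity of the original problem has been traded for a mere loss of a half-power rather than the catastrophic $h^{1/n}$); then invoke the comparison principle for the scheme \eqref{S2}. The lower bound $n^n v_h \ge v - C\sqrt h$ is symmetric, using subsolution test functions and the fact that $v_h$ is taken to be the \emph{largest} scheme solution. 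To make the doubling work on the discrete level one introduces the auxiliary function $(x,y)\mapsto n^n v_h(x) - v(y) - \frac{|x-y|^2}{2\eps} - \text{(boundary penalization)}$, maximizes over $[0,1]^n_h\times[0,1]^n$, and at the maximizer uses: (i) that $v_h$ satisfies the scheme, so the discrete product of positive parts equals $v_h^{n-1}f$; (ii) that $v$ is a viscosity subsolution, tested against the quadratic; (iii) the Lipschitz bound $[v]_1$ to control $|x-y|\lesssim\eps$ and hence the consistency error; (iv) the Lipschitz bound on $f$ and a lower bound $v\ge c(x_1\cdots x_n)$ (to keep the right-hand side $v^{n-1}f$ from vanishing degenerately and to bound the difference $v_h^{n-1}f(x)-v(y)^{n-1}f(y)$). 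Optimizing $\eps\sim\sqrt h$ yields the $O(\sqrt h)$ rate.

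Two technical points deserve care. The first is the boundary: $v$ vanishes on $\Gamma$ with a nondegenerate normal derivative (order $x_i$ near the face $x_i=0$), so when the doubling maximizer lands within $O(h)$ of $\Gamma$ one must show the penalty terms still give an $O(\sqrt h)$ contribution; here I would exploit the explicit barrier $\phi(x)=Cn(x_1\cdots x_n)^{1/n}$ type bounds already used in the $O(h^{1/n})$ discussion, now applied to $v$ rather than $u$, where the singularity has been removed — concretely, $v_h$ and $v$ are both bounded between $c\,x_1\cdots x_n$ and $C\,x_1\cdots x_n$, so near $\Gamma$ both are $O(h)$ and \eqref{eq:ratev} holds trivially there. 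The second is the nonlinearity of $z\mapsto z^{n-1}$ in the right-hand side: since $v_h$ and $v$ are $a\ priori$ bounded and bounded below by a known positive quantity on $[\delta,1]^n$, the map is Lipschitz there and the term is absorbed into the comparison, but one must track that the relevant Lipschitz constant depends only on $n$, $[f]_1$, and $\inf f$ — which is exactly the claimed dependence of $C$.

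The main obstacle I anticipate is making the discrete doubling argument interface cleanly with the one-sided (backward-difference, upwind) structure of \eqref{S2}: unlike a symmetric monotone scheme, here the consistency inequality only controls $D_i^- \phi(x)$ from one side, so the choice of test function and the sign bookkeeping in the product $(D_1^-\phi)_+\cdots(D_n^-\phi)_+$ must be handled so that the error is genuinely one-signed and of size $O(h)$ away from $\Gamma$; I expect this to be the step where the paper invests the most work, and where the half-power (rather than full power) loss is unavoidable because $v$ is only Lipschitz, not $C^{1,1}$.
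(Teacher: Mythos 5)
Your reduction of \eqref{eq:ratev2} to \eqref{eq:ratev} is fine and matches the paper, but the core of your plan --- a direct doubling-of-variables comparison between \eqref{S2} and \eqref{P2} --- breaks at the zeroth-order term, and this is not a technicality that can be ``absorbed.'' Writing \eqref{P2} as $G(x,v,Dv):=(v_{x_1})_+\cdots(v_{x_n})_+-v^{n-1}f=0$, the map $z\mapsto G(x,z,p)$ is \emph{decreasing}, i.e.\ the Hamiltonian is not proper. At the maximum of $\Phi(x,y)=v_h(x)-v(y)-\tfrac{\alpha}{2}|x-y|^2$ the sub- and supersolution inequalities subtract to
\[
v(y_\alpha)^{n-1}f(y_\alpha)-v_h(x_\alpha)^{n-1}f(x_\alpha)\;\leq\; C\Bigl(\alpha h+\tfrac1\alpha\Bigr),
\]
and in precisely the case you are trying to rule out ($v_h(x_\alpha)>v(y_\alpha)$) the left-hand side is already nonpositive, so the inequality yields no bound on $v_h(x_\alpha)-v(y_\alpha)$. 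This cannot be repaired by Lipschitz estimates on $z\mapsto z^{n-1}f$: Lemma \ref{lem:nonuniq} shows \eqref{P2} has infinitely many nondecreasing viscosity solutions, so no two-sided comparison principle (and hence no standard Crandall--Lions rate proof) can hold for \eqref{P2}; only the one-sided maximality of Lemma \ref{lem:vmaximal} and the conditional discrete comparison of Theorem \ref{thm:Svcomp} survive. The paper flags exactly this obstruction in the discussion following the statement of Theorem \ref{thm:ratev}.

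The missing idea is to route the argument through the auxiliary problem \eqref{P3}/\eqref{S3}, whose Hamiltonian $H(x,z,p)=\prod_i(z+nx_ip_i)_+$ is nondecreasing in $z$ with the quantitative lower bound $\partial_z H\geq nH^{\frac{n-1}{n}}\geq n\,(\inf f)^{\frac{n-1}{n}}>0$ (Proposition \ref{prop:H1}); there the doubling argument you sketch does go through essentially as you describe, yielding Theorem \ref{thm:rate1}. The rate is then transferred back to \eqref{S2} via Lemma \ref{lem:changeofvar}, $|x_1\cdots x_n w_h(x)^n-v_h(x)|\leq Ch$, whose proof uses the multiplicative perturbations $(1\pm\alpha)v_h$: these exploit the homogeneity of \eqref{S2} to convert the wrong-sign zeroth-order term into a harmless factor $(1\pm\alpha)$ multiplying $f$, and they are the correct substitute for the additive penalizations you propose. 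Without this detour (or an equivalent device for neutralizing the $v^{n-1}$ term), your proof of \eqref{eq:ratev} does not close.
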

Notice that \eqref{P2} has a zeroth order term with the wrong sign for comparison principle arguments to hold. We can see this by observing that the method of vanishing viscosity takes the form
\[v_{x_1} \cdots v_{x_n} - \eps\Delta v = v^{n-1} f.\]
Since the standard proof of convergence rates for numerical approximations to viscosity solutions is based on the proof of the comparison principle~\cite{crandall1984two,souganidis1985approximation,deckelnick2004}, we cannot directly apply these techniques to \eqref{S2}.

Our proof of Theorem \ref{thm:ratev} passes through an auxiliary problem, which actually suggests another numerical scheme for solving \eqref{P1}. Based on our observation that $u(x) = n(x_1\cdots x_n)^\frac{1}{n}$ is the viscosity solution of \eqref{P1} corresponding to $f\equiv 1$, it is natural in more general settings to make the ansatz 
\begin{equation}\label{eq:uansatz}
u(x) = n(x_1\cdots x_n)^\frac{1}{n} w(x),
\end{equation}
for some function $w:[0,1]^n \to [0,\infty)$. If this ansatz is correct, then $w$ would be a viscosity solution of
\begin{equation}\tag{P3}\label{P3}
\prod_{i=1}^n (w + nx_i w_{x_i})_+ = f\ \ \text{ on } (0,1]^n.
\end{equation}
It turns out that \eqref{P3} is well-posed within the class of bounded viscosity solutions \emph{without} imposing a boundary condition. The boundary condition is actually encoded into the PDE due to the fact that the term $nx_i w_{x_i}$ vanishes on $\Gamma\cap \{x_i=0\}$. This suggests, for example, that we should expect $w(0) = f(0)^\frac{1}{n}$. Due to the degeneracy of the terms $nx_iw_{x_i}$, there are in general infinitely many unbounded viscosity solutions of \eqref{P3}. We characterize $w$ from \eqref{eq:uansatz} as the maximal bounded viscosity solution of \eqref{P3}, and we show in Lemma \ref{lem:wlip} that $w \in C^{0,1}([0,1]^n)$ whenever $f^\frac{1}{n} \in C^{0,1}([0,1]^n)$. 

We propose the following numerical scheme for \eqref{P3}:
\begin{equation}\tag{S3}\label{S3} 
\prod_{i=1}^n \left(w_h(x) + nx_i D^-_iw_h(x)\right)_+= f(x) \ \ \text{for all } x \in [0,1]^n_h,
\end{equation}
where $w_h:[0,1]^n_h \to [0,\infty)$. Here, we take $w_h(x)$ to be the largest solution of \eqref{S3} at each $x \in [0,1]^n_h$. We note that for $x \in [0,1]^n_h$ such that $x_i=0$, the quantity $D^-_iw_h(x)$ is undefined. Since this term appears in the form $nx_iD^-_iw_h(x)$, its value is not used in the scheme \eqref{S3}. It is interesting to note that when $f$ is constant, the scheme \eqref{S3} gives the \emph{exact} solution of \eqref{P3}.

This scheme can be solved efficiently in a single pass, and in dimension $n=2$ the scheme can be solved in closed form
\begin{equation}\label{eq:S3closed}
w_h(x) = C + \sqrt{D^2 + (2x_1 + h)(2x_2+h)h^2f(x)},
\end{equation}
where
\[C = x_1(2x_2 + h)w_h(x-he_1) + x_2(2x_1 + h)w_h(x-he_2),\]
and
\[D = x_1(2x_2 + h)w_h(x-he_1) - x_2(2x_1 + h)w_h(x-he_2).\]

Since the zeroth order term in \eqref{P3} has the correct sign, we can use a modification of the standard convergence proof to establish the following convergence rate.
\begin{theorem}\label{thm:rate1}
Suppose that $f \in C^{0,1}([0,1]^n)$ and $f>0$. Let $w_h$ be the solution of \eqref{S3}, and let $w$ be the maximal bounded viscosity solution of \eqref{P3}. Then 
\begin{equation}\label{eq:rate1}
|w(x) - w_h(x)| \leq C\sqrt{h}\ \ \text{ for all } x \in [0,1]^n_h,
\end{equation}
where $C = C\big(n,[f]_{1;[0,1]^n}, \inf_{[0,1]^n}f\big)$.
\end{theorem}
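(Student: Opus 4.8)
The plan is to follow the classical Crandall–Lions / Souganidis doubling-of-variables argument for rates of convergence of monotone schemes, adapted to \eqref{P3}. The scheme \eqref{S3} is monotone and consistent: writing it in the form $F_h(x,w_h(x),w_h(x-he_1),\dots,w_h(x-he_n))=0$, the left-hand side is nondecreasing in each neighboring value $w_h(x-he_i)$ (after selecting the largest root) and nonincreasing in $w_h(x)$ in the relevant regime, which is exactly the structure needed for a discrete comparison principle. Crucially, since the zeroth-order term $w_h(x)$ in \eqref{P3} enters with the correct (dissipative) sign — unlike in \eqref{P2} — the comparison-principle-based estimate goes through. The first step is therefore to record a discrete comparison principle for \eqref{S3}, and the known comparison principle for \eqref{P3} characterizing $w$ as the maximal bounded viscosity solution. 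I would also use that $w\in C^{0,1}([0,1]^n)$ by Lemma \ref{lem:wlip}, so $w$ has a genuine gradient a.e.\ and linear modulus of continuity.

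Next I would introduce the usual auxiliary function, roughly
\[
\Phi(x,y) = w(x) - w_h(y) - \frac{|x-y|^2}{2\eps} - \gamma\,\theta(x),
\]
where $\theta$ is a small penalization handling the boundary/degeneracy, and $\eps,\gamma$ are parameters to be optimized. One maximizes $\Phi$ over $x\in[0,1]^n$, $y\in[0,1]^n_h$; call the maximizer $(\bar x,\bar y)$. Standard estimates give $|\bar x-\bar y| = O(\eps)$ using the Lipschitz bound on $w$. Then one uses the viscosity subsolution property of $w$ at $\bar x$ with test function $y\mapsto w_h(\bar y) + |x-\bar y|^2/2\eps + \gamma\theta(x)$ and the fact that $w_h$ satisfies the scheme at $\bar y$ (together with a discrete consistency estimate: plugging the smooth test function into \eqref{S3} produces the PDE operator up to an $O(h/\eps^2)$ truncation error, since second differences of the quadratic penalization are $O(1/\eps^2)$). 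Combining the two inequalities and using the positivity $f>0$ together with the correct sign of the zeroth-order term yields a bound of the form $\sup(w - w_h) \le C(\eps + h/\eps + \gamma)$; optimizing $\eps\sim\sqrt h$ and sending $\gamma\to0$ gives $w - w_h \le C\sqrt h$. The reverse inequality $w_h - w \le C\sqrt h$ is obtained symmetrically, swapping the roles of the two functions and using the discrete subsolution/continuous supersolution properties, which is where the \emph{maximality} of $w$ and the single-pass "largest solution" choice for $w_h$ must be matched carefully.

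The main obstacle is the degeneracy of the terms $nx_i w_{x_i}$ on $\Gamma\cap\{x_i=0\}$ and the absence of an explicit boundary condition: the usual argument controls behavior up to the boundary via a penalization $\theta$, but here one must ensure that near $\{x_i=0\}$ the doubling argument does not lose the factor $x_i$ (otherwise the consistency error $O(h/\eps^2)$ could blow up, as happened with \eqref{S1}). Concretely, one needs the truncation error of the scheme \eqref{S3}, when applied to a smooth test function, to be $O(h/\eps^2)$ \emph{uniformly} including near the coordinate hyperplanes — this should work precisely because the operator in \eqref{P3} degenerates there in a way that kills the derivative terms, mirroring the heuristic that the boundary condition is "encoded in the PDE." A secondary technical point is handling the max-with-zero $(\,\cdot\,)_+$ nonlinearities and verifying that the selected branch (largest root) of \eqref{S3} is the one that is consistent with the maximal viscosity solution $w$; this is where I expect to spend the most care, and it likely requires a short barrier/ordering lemma showing $w_h \le C$ uniformly and $w_h \ge (\inf f)^{1/n}>0$ on the grid, so that all the $(\,\cdot\,)_+$ terms are genuinely positive and the operator is smooth along the relevant trajectory. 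Once these localization and branch-selection issues are dispatched, the rest is the routine optimization $\eps=\sqrt h$ described above, with the final constant depending on $n$, $[f]_{1;[0,1]^n}$, and $\inf_{[0,1]^n} f$ exactly as stated.
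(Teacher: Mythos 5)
Your overall skeleton (doubling of variables with quadratic penalization, monotonicity of the scheme, the Lipschitz bound on $w$ from Lemma \ref{lem:wlip} to control $|\bar x-\bar y|$, and the optimization $\eps\sim\sqrt h$) is the same as the paper's, and the final bookkeeping $C(\eps+h/\eps)$ is right (your intermediate claim of an $O(h/\eps^2)$ truncation error is a slip --- the backward difference of the quadratic penalization is exact up to $O(h/\eps)$, since its Hessian is $O(1/\eps)$, and that is what your final bound actually uses). However, there is a genuine gap at precisely the step you yourself flag as ``the main obstacle'': the behavior of the argument at the degenerate faces $\{x_i=0\}$. You propose an unspecified penalization $\gamma\theta(x)$ to keep the maximizer away from the boundary, but this is the device for Dirichlet problems; here there is no boundary condition, the equation \eqref{P3} is only posed on $(0,1]^n$, and the maximizer $\bar y$ of the doubled functional can perfectly well land on $\Gamma$, where the viscosity supersolution inequality for $w$ is simply not available. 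You do not say what $\theta$ is, why the maximizer avoids $\Gamma$, or why the extra error $\gamma\theta$ can be removed, so the proof does not close.

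The paper resolves this with a structural observation rather than a penalization: both the discrete solution and the continuum solution extend by coordinatewise projection, $\bar w(x)=w(x_+)$, to solutions of $H(x_+,\cdot,\cdot)=f(x_+)$ on all of $(-\infty,1]^n$ (Propositions \ref{prop:extension} and \ref{prop:extension2}); this works exactly because the degenerate factors $nx_iD^-_i$ are multiplied by $(x_i)_+$, so the extended functions satisfy the equation across $\{x_i=0\}$. After this extension the doubled maximum is automatically attained at points that project into $[0,1]^n$, and both the scheme inequality and the supersolution inequality are available there with no penalization and no lost factor of $x_i$. The second ingredient you leave implicit is how ``the correct sign of the zeroth-order term together with $f>0$'' actually produces the estimate: one needs the quantitative strict monotonicity $\partial H/\partial z\geq nH^{(n-1)/n}\geq n f(\bar y)^{(n-1)/n}>0$ (Proposition \ref{prop:H1}, an arithmetic--geometric mean inequality), which is what converts the difference of the two Hamiltonian inequalities into a bound on $w_h(\bar x)-w(\bar y)$ with a constant depending on $\inf f$. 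That piece is short, but without it and without a concrete replacement for the projection-extension argument, the proposal is a correct outline of the routine part of the proof and omits the two steps that are actually specific to \eqref{P3}.
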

Our proof of Theorem \ref{thm:ratev} proceeds by first showing (in Lemma \ref{lem:changeofvar}) that 
\[|x_1\dots x_n w_h(x)^n - v_h(x)| \leq Ch,\]
and then invoking Theorem \ref{thm:rate1}.

Although both \eqref{S2} and \eqref{S3} have the same provable convergence rates, our numerical results presented in Section \ref{sec:numerics} suggest that in general \eqref{S2} has a better experimental convergence rate than \eqref{S3}. This can be explained by observing that $u$ can only have gradient singularities when transitioning from zero to a positive value. The transformation $v=u^n/n^n$ regularizes these gradient singularities \emph{anywhere} in the domain $[0,1]^n$, and not just on the boundary $\Gamma$. On the other hand, the transformation $w(x) = n^{-1}(x_1\cdots x_n)^{-\frac{1}{n}}u(x)$ is designed only to capture singularities on the boundary $\Gamma$. In Section \ref{sec:numerics}, we give an example of a discontinuous function $f$ for which \eqref{S2} exhibits a better convergence rate than \eqref{S3} for the reason outlined above.

This paper is organized as follows. In Section \ref{sec:comparison}, we prove comparison principles for viscosity solutions of \eqref{P2} and \eqref{P3}. In Section \ref{sec:conv}, we use the Barles-Souganidis framework~\cite{barles1991} to prove convergence of the schemes \eqref{S2} and \eqref{S3} under the assumption that $f$ is continuous and nonnegative. In Section \ref{sec:rates} we prove Theorems \ref{thm:ratev} and \ref{thm:rate1} establishing rates of convergence for \eqref{S2} and \eqref{S3} when $f$ is positive and Lipschitz. The proofs of the convergence rates require Lipschitz estimates for the viscosity solutions of \eqref{P2} and \eqref{P3}. These are obtained in Section \ref{sec:S3lip}. In Section \ref{sec:numerics}, we show the results of numerical simulations comparing all three schemes.

\section{Some comparison principles}
\label{sec:comparison}

We first prove comparison principles for \eqref{P1}--\eqref{P3} that will be utilized later in the convergence proofs. Let us first briefly comment on the differences between \eqref{P1} and \eqref{eq:Pintro}. As we mentioned in Section \ref{sec:main}, it is necessary to modify \eqref{eq:Pintro} by taking the positive parts of $u_{x_1},\dots,u_{x_n}$ when posing the PDE on compact domains. To see why this is necessary, consider \eqref{P1} with $f\equiv 1$ in dimension $n=2$ without this modification:
\begin{equation}\label{eq:P1n2}
\left.
\begin{aligned}
u_{x_1}u_{x_2} &= 1& &\text{in } (0,1]^2\\
u &=0& &\text{on } \Gamma.
\end{aligned}\right\}
\end{equation}
This Hamilton-Jacobi equation of course has a classical solution $u(x) = 2\sqrt{x_1x_2}$ that is smooth on $(0,1]^2$. However, $u$ is not a viscosity solution of \eqref{eq:P1n2}. To see this, let $\phi(x) = -t(x_1+x_2)$. Then $u - \phi$ has a local maximum at $x=(1,1)$ relative to $(0,1]^2$ for every $t>0$. Since 
\[\phi_{x_1}(1,1)\phi_{x_2}(1,1) = t^2 > f(1,1) \]
for $t>\sqrt{f(1,1)}$, the viscosity subsolution property fails to hold at $x=(1,1)$. This is a well-known issue with viscosity solutions on boundaries of domains (see \cite{crandall1992}). Notice, however, that
\[(\phi_{x_1}(1,1))_+(\phi_{x_2}(1,1))_+ = 0 \leq f(1,1) \ \ \text{ for all } t>0. \]

Taking the positive parts of $u_{x_1},\dots,u_{x_n}$ in \eqref{eq:Pintro} gives the PDE a useful monotonicity property that we will exploit in this paper. Since it is useful to abstract this property, we make the following definition.
\begin{definition}\label{def:directed}
Let $s < 1$. We say $H:(s,1]^n\times \R \times \R^n \to \R$ is \emph{directed} if for all $(x,z) \in (s,1]^n\times\R$
\begin{equation}\label{eq:directed}
p \mapsto H(x,z,p)  \text{ is nondecreasing}.
\end{equation}
\end{definition}

If $H$ is directed, it is simple to construct a monotone (or upwind) numerical scheme using backward difference quotients. Indeed, let us consider the scheme
\begin{equation}\label{eq:S}
H(x,u_h(x),D^-u_h(x)) = 0 \ \ \ \text{in } (s,1]^n_h,
\end{equation}
where $u_h:[s,1]^n_h \to \R$ and 
\[D^-u_h(x) = (D^-_1u_h(x),\dots,D^-_nu_h(x)).\]
To see that \eqref{eq:S} is monotone, suppose that $u(x_0) = v(x_0)$ and $u(x) \geq v(x)$ for $x \in [s,1]^n_h$. Then $D^-_iu(x_0) \leq D^-_iv(x_0)$ for all $i \in \{1,\dots,n\}$, and since $H$ is directed 
\[H(x,u(x),D^-u(x)) \leq H(x,v(x),D^-v(x)).\]
The following Theorem is a direct consequence of this monotonicity and \cite[Theorem 2.1]{barles1991}.
\begin{theorem}\label{thm:bs-conv}
Suppose that $H$ is continuous and directed. Let $\{u_{h}\}_{h>0}$ be solutions of \eqref{eq:S} satisfying
\[\sup_{\substack{h>0\\x \in [s,1]^n_h}} |u_h(x)| < \infty.\] 
Then 
\[\bar{u}(x) = \limsup_{\substack{h\to 0\\y \to x}} u_h(y) \ \ (\text{resp. }\underline{u}(x) = \liminf_{\substack{h\to 0\\y \to x}} u_h(y))\]
is a viscosity subsolution (resp.~supersolution) of $H = 0$ in $(s,1]^n$.
\end{theorem}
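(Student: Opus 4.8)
The plan is to verify that the scheme \eqref{eq:S} falls into the Barles--Souganidis abstract convergence framework \cite{barles1991}, whose conclusion is exactly the assertion of the theorem; concretely, I would check the three structural hypotheses of \cite[Theorem 2.1]{barles1991}: \emph{monotonicity}, \emph{stability}, and \emph{consistency}. The first two are essentially already in hand, and only the third needs a short argument.

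\emph{Monotonicity} is precisely the computation given just above the statement. If $u,v:[s,1]^n_h\to\R$ satisfy $u\ge v$ on $[s,1]^n_h$ with $u(x_0)=v(x_0)$ for some $x_0\in(s,1]^n_h$, then for each $i$
\[
D^-_iu(x_0)=\frac{u(x_0)-u(x_0-he_i)}{h}\le\frac{v(x_0)-v(x_0-he_i)}{h}=D^-_iv(x_0),
\]
and since $H$ is directed, $H(x_0,u(x_0),D^-u(x_0))\le H(x_0,v(x_0),D^-v(x_0))$, which is the monotonicity required in \cite{barles1991}. \emph{Stability} is the hypothesis of the theorem: the uniform bound $\sup_{h>0,\,x\in[s,1]^n_h}|u_h(x)|<\infty$ both ensures the relaxed half-limits $\bar u$ and $\underline u$ are well-defined finite functions and provides the stability needed by the framework.

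\emph{Consistency} is the only point requiring a brief verification. Fix $x_0\in(s,1]^n$ and a test function $\phi\in C^\infty$. For $h>0$ small the backward stencil $\{y-he_i\}_{i=1}^n$ lies in $[s,1]^n_h$ whenever $y$ is close to $x_0$ (this uses $x_0\in(s,1]^n$, so $x_0$ is interior in each backward coordinate direction), and a first-order Taylor expansion gives $D^-_i\phi(y)=\phi_{x_i}(y)+O(h)$, uniformly for $y$ near $x_0$; hence $D^-\phi(y)\to\nabla\phi(x_0)$ as $h\to0$, $y\to x_0$. Combining this with the continuity of $H$ yields
\[
\lim_{\substack{h\to0,\ y\to x_0\\ \xi\to0}}H\big(y,\phi(y)+\xi,D^-\phi(y)\big)=H\big(x_0,\phi(x_0),\nabla\phi(x_0)\big),
\]
which is the consistency relation of \cite[Theorem 2.1]{barles1991}. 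With monotonicity, stability, and consistency in place, \cite[Theorem 2.1]{barles1991} directly yields that $\bar u$ is a viscosity subsolution and $\underline u$ a viscosity supersolution of $H=0$ in $(s,1]^n$.

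I do not expect any genuine obstacle here; the proof is a bookkeeping exercise. The one place that calls for mild care is the half-open geometry of the domain: one should note that at the ``upper'' faces $\{x_i=1\}$ the backward difference $D^-_i$ is still available, so the consistency check goes through on all of $(s,1]^n$, and that since the theorem only asserts the sub/supersolution property on $(s,1]^n$, no condition is imposed at the faces $\{x_i=s\}$.
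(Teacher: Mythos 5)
Your proposal is correct and follows the same route as the paper, which simply observes the monotonicity of the backward-difference scheme for directed $H$ and then invokes \cite[Theorem 2.1]{barles1991}; your explicit verification of consistency and stability fills in exactly the routine checks the paper leaves implicit. No gaps.
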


When $f \in C([0,\infty)^n)$, there is a unique nondecreasing viscosity solution $u$ of \eqref{eq:Pintro}~\cite{calder2015directed}. We now show that the restriction of $u$ to $[0,1]^n$ is the unique nondecreasing viscosity solution of \eqref{P1}. This establishes the equivalence of \eqref{eq:Pintro} and \eqref{P1}. 
\begin{lemma}\label{lem:comp}
Suppose $f \in C([0,1]^n)$ is nonnegative. Let $u\in \usc([0,1]^n)$ be a viscosity subsolution of \eqref{P1}, and let $v \in \lsc([0,1]^n)$ be a nondecreasing viscosity supersolution of \eqref{P1}.  Then $u\leq v$ on $[0,1]^n$.
\end{lemma}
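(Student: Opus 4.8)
The plan is to prove the comparison principle for \eqref{P1} by the standard doubling-of-variables technique, but with modifications tailored to the peculiar geometry: the equation is posed on the half-open cube $(0,1]^n$ with the boundary $\Gamma$ sitting on the coordinate hyperplanes through the origin, and the monotonicity (directedness) of the Hamiltonian plays the role usually played by coercivity. First I would reduce to a strict subsolution: replace $u$ by $u_\lambda(x) = u(x) - \lambda(n - x_1 - \cdots - x_n)$ for small $\lambda > 0$, or alternatively $u_\lambda = (1-\lambda)u$ combined with a small additive perturbation, chosen so that $u_\lambda$ is a subsolution of $(u_{x_1})_+ \cdots (u_{x_n})_+ = f - \eta$ for some $\eta > 0$ on $(0,1]^n$, still vanishes (or is $\le 0$) on $\Gamma$, and satisfies $u_\lambda \to u$ uniformly as $\lambda \to 0$. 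The point of subtracting a multiple of $(n - \sum x_i)$ is that it decreases each $u_{x_i}$, which by directedness only helps the subsolution inequality, while making it strict; then it suffices to show $u_\lambda \le v$ and let $\lambda \to 0$.

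Next I would run the doubling argument: suppose for contradiction that $M := \sup_{[0,1]^n}(u_\lambda - v) > 0$, and for $\eps > 0$ consider
\[
\Phi_\eps(x,y) = u_\lambda(x) - v(y) - \frac{1}{2\eps}|x - y|^2,
\]
maximized over $[0,1]^n \times [0,1]^n$ at some $(x_\eps, y_\eps)$. The usual estimates give $|x_\eps - y_\eps|^2/\eps \to 0$ and $u_\lambda(x_\eps) - v(y_\eps) \to M > 0$ as $\eps \to 0$, with $x_\eps, y_\eps \to \bar x$ for some common limit point $\bar x$. Since $u_\lambda \le 0 \le v$ on $\Gamma$ (using that $v$ is a supersolution vanishing on $\Gamma$, so $v \ge 0$ there — or this follows from $v$ nondecreasing together with $v = 0$ on $\Gamma$ in the viscosity sense), the positivity $M > 0$ forces $\bar x \in (0,1]^n$, hence for $\eps$ small both $x_\eps$ and $y_\eps$ lie in the open-on-the-$\Gamma$-side region where the PDE holds. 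Here I must be slightly careful: points can still sit on the faces $\{x_i = 1\}$, but those are included in $(0,1]^n$ and the equation is imposed there, so the standard interior-type test-function argument applies at $x_\eps$ (as a subsolution point, using that $x \mapsto \Phi_\eps(x, y_\eps)$ has a max) and at $y_\eps$ (as a supersolution point). Writing $p_\eps = (x_\eps - y_\eps)/\eps$, the subsolution inequality at $x_\eps$ with test gradient $p_\eps$ and the supersolution inequality at $y_\eps$ with the same $p_\eps$ give
\[
\prod_{i=1}^n (p_{\eps,i})_+ \le f(x_\eps) - \eta
\quad\text{and}\quad
\prod_{i=1}^n (p_{\eps,i})_+ \ge f(y_\eps),
\]
and subtracting yields $0 \le f(x_\eps) - f(y_\eps) - \eta$, which contradicts the continuity of $f$ once $|x_\eps - y_\eps|$ is small enough that $|f(x_\eps) - f(y_\eps)| < \eta$. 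This contradiction shows $M \le 0$, i.e.\ $u_\lambda \le v$, and letting $\lambda \to 0$ finishes the proof.

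The main obstacle I anticipate is the boundary behavior, specifically confirming that the maximum in the doubling argument cannot escape to $\Gamma$ and that the test-function inequalities are legitimately available at $x_\eps$ and $y_\eps$ when these points lie on the ``closed'' faces $\{x_i = 1\}$ — one must check that a max of $u_\lambda(\cdot) - v(y_\eps) - \tfrac{1}{2\eps}|\cdot - y_\eps|^2$ over $[0,1]^n$ attained on such a face still produces the subsolution inequality, which is exactly why the PDE in \eqref{P1} is imposed on the half-open $(0,1]^n$ rather than the open cube, and why directedness matters: a constraint on $(u_{x_i})_+$ from above is the ``easy'' direction at a face where the outward normal points in the $+e_i$ direction. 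I would also need to handle the precise sense in which $v = 0$ on $\Gamma$ holds for a nondecreasing supersolution — likely invoking that nondecreasing plus the boundary condition gives $v \ge 0$ everywhere, so that $u_\lambda \le 0 \le v$ on $\Gamma$ is genuine. The perturbation $u_\lambda$ and the reduction to a strict inequality, together with directedness, are exactly the tools that let the otherwise-delicate degenerate Hamiltonian $\prod (p_i)_+$ behave well under the comparison; everything else is the textbook argument.
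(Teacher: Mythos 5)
There is a genuine gap, and it sits exactly at your reduction to a strict subsolution. First, a sign error: with $u_\lambda(x) = u(x) - \lambda(n - x_1 - \cdots - x_n)$ you get $\partial_{x_i}u_\lambda = u_{x_i} + \lambda$, so each partial derivative \emph{increases} and the product $\prod_i (u_{\lambda,x_i})_+$ can only go up --- the wrong direction for a subsolution. More seriously, even after fixing the sign (say $u_\lambda = u - \lambda\sum_i x_i$, or $u_\lambda = (1-\lambda)u$), you cannot achieve $\prod_i (u_{\lambda,x_i})_+ \le f - \eta$ with a uniform $\eta>0$ under the stated hypothesis that $f$ is merely \emph{nonnegative}: wherever $f$ vanishes (or wherever some factor $(u_{x_i})_+$ vanishes, so the product is already $0$), the left side is $\ge 0$ and no strict gap is possible. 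The lemma is applied in the paper with $f\ge 0$, so you cannot quietly assume $\inf f>0$. Relatedly, your argument uses the hypothesis that $v$ is nondecreasing only to control the boundary values, but that hypothesis is provably essential in the interior: for $n=1$, $f\equiv 0$, the functions $u\equiv 0$ (subsolution) and $v(x)=-x$ (supersolution, since $(v')_+=0\ge 0$, with $v(0)=0$) violate the conclusion. Any correct proof must inject the monotonicity of $v$ into the PDE inequality, and your scheme never does.

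The paper's proof perturbs the \emph{supersolution} instead: set $v_\lambda = v + \lambda(x_1+\cdots+x_n)$. Because $v$ is nondecreasing, $v_{x_i}\ge 0$ in the viscosity sense, so $(v_{\lambda,x_i})_+ = v_{x_i}+\lambda \ge \lambda$ and
\begin{equation*}
\prod_{i=1}^n (v_{\lambda,x_i})_+ \;\ge\; \prod_{i=1}^n (v_{x_i})_+ + \lambda^n \;\ge\; f + \lambda^n ,
\end{equation*}
which yields a strict supersolution with a uniform gap $\lambda^n>0$ regardless of whether $f$ vanishes. With that replacement, your doubling-of-variables argument (including the discussion of the faces $\{x_i=1\}$ versus $\Gamma$, which is correct and is indeed why the equation is posed on $(0,1]^n$) goes through verbatim: the contradiction becomes $f(y_\eps) + \lambda^n \le f(x_\eps)$, impossible for small $\eps$ by continuity of $f$; then send $\lambda\to 0$.
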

\begin{proof}
Let $\lambda>0$ and set $v_\lambda = v + \lambda(x_1 + \cdots+ x_n)$. 
Since $v$ is nondecreasing, $v_\lambda$ is a viscosity solution of 
\[(v_{\lambda,x_1})_+ \cdots (v_{\lambda,x_n})_+ \geq  f + \lambda^n \ \ \text{ on } (0,1]^n.\] 
The standard comparison argument based on doubling the variables (see \cite{crandall1992,bardi1997}) shows that  $u \leq v_\lambda$ on $[0,1]^n$. Sending $\lambda \to 0$ completes the proof.
\end{proof}
\begin{lemma}\label{lem:equivalence}
Let $f \in C([0,\infty)^n)$ be nonnegative and let $u$ be the nondecreasing viscosity solution of \eqref{eq:Pintro}. Then the restriction of $u$ to $[0,1]^n$ is the nondecreasing viscosity solution of \eqref{P1}.
\end{lemma}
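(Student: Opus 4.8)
\textbf{Proof proposal for Lemma \ref{lem:equivalence}.}

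The plan is to show the restriction $u|_{[0,1]^n}$ satisfies both the subsolution and supersolution properties for \eqref{P1}, and then invoke the comparison principle of Lemma \ref{lem:comp} to get uniqueness within the nondecreasing class. The subsolution direction is essentially free: since $u$ solves \eqref{eq:Pintro} on all of $\R^n_+$, in particular $u$ is a viscosity subsolution of $(u_{x_1})_+\cdots(u_{x_n})_+ = f$ on the open set $(0,1)^n$ (the positive-part PDE is weaker than the plain product equation, and any test function touching from above on the interior works for $\R^n_+$ too); and on the boundary piece $\partial[0,1]^n\cap \R^n_+$ (i.e.\ where some $x_i=1$), the subsolution test at a boundary maximum is handled exactly as in the motivating example in Section \ref{sec:comparison}: if $u-\phi$ has a local max at such a point $x_0$ relative to $(0,1]^n$, then since $u$ is nondecreasing one gets $\phi_{x_i}(x_0)\ge 0$ only in the directions pointing into the domain, but in the direction $e_i$ with $x_{0,i}=1$ the maximum constraint only controls the derivative from one side, so $(\phi_{x_i}(x_0))_+$ can be taken to be $0$ and the subsolution inequality $0\le f(x_0)$ holds trivially. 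The boundary condition $u=0$ on $\Gamma$ holds because $u$ solves \eqref{eq:Pintro} with the Dirichlet condition $u=0$ on $\partial\R^n_+$, and $\Gamma\subset\partial\R^n_+$.

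For the supersolution property, I would check the interior equation on $(0,1]^n$ and the boundary condition on $\Gamma$. On $(0,1)^n$ the supersolution inequality for \eqref{P1} follows from the one for \eqref{eq:Pintro}: if $u-\phi$ has a local min at an interior point $x_0$, then $u_{x_i}(x_0)\ge \phi_{x_i}(x_0)$ in the viscosity sense; since $u$ is nondecreasing, $\phi_{x_i}(x_0)\ge 0$ (a standard argument: perturb $\phi$ downward in direction $-e_i$), so $(\phi_{x_i}(x_0))_+ = \phi_{x_i}(x_0)$ and the plain product inequality $\phi_{x_1}(x_0)\cdots\phi_{x_n}(x_0)\ge f(x_0)$ from \eqref{eq:Pintro} transfers verbatim. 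The only genuinely new point is the boundary piece where some $x_i=1$: here I need the supersolution inequality to continue to hold, but since $u$ on $[0,2]^n$ (say) solves \eqref{eq:Pintro}, the point with $x_i=1$ is an interior point of the larger domain, so the supersolution test there is inherited directly. The boundary values $u=0$ on $\Gamma$ again come from the Dirichlet data of \eqref{eq:Pintro}.

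Having verified that $u|_{[0,1]^n}$ is a nondecreasing viscosity solution of \eqref{P1}, I would finish by uniqueness: if $\tilde u$ is any other nondecreasing viscosity solution of \eqref{P1}, apply Lemma \ref{lem:comp} once with $(u,v)=(u|_{[0,1]^n},\tilde u)$ and once with $(u,v)=(\tilde u, u|_{[0,1]^n})$ — both are legitimate since each is simultaneously a nondecreasing sub- and supersolution — to conclude $\tilde u = u|_{[0,1]^n}$. The main obstacle, and the step deserving the most care, is the boundary behavior where some coordinate equals $1$: one must argue cleanly that the positive-part modification exactly neutralizes the failure of the subsolution test there (as in the explicit $n=2$ example) while the supersolution test is unaffected because it is inherited from the larger domain on which $u$ genuinely solves \eqref{eq:Pintro}. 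A minor secondary point is making the "nondecreasing implies $\phi_{x_i}\ge 0$ at interior test points" step rigorous, but this is routine.
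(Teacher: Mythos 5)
Your high-level plan (verify that $u|_{[0,1]^n}$ is a nondecreasing sub- and supersolution of \eqref{P1}, then apply Lemma \ref{lem:comp} twice) is a legitimate strategy, and you correctly identify the upper boundary $\{x\in(0,1]^n : x_i=1 \text{ for some } i\}$ as the crux. But both of your arguments at that boundary are wrong as stated, and this is a genuine gap. For the subsolution test: the quantity $(\phi_{x_i}(x_0))_+$ cannot "be taken to be $0$" --- it is determined by the test function, and a $\phi$ touching $u$ from above at $x_0$ relative to $(0,1]^n$ can perfectly well have all partial derivatives strictly positive (e.g.\ $\phi=u$ itself when $u$ is smooth there, or $\phi$ with slightly smaller slopes). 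The one-sided touching only gives $\phi_{x_i}(x_0)\le$ (a lower left Dini derivative of $u$), and turning that into $\prod_i(\phi_{x_i}(x_0))_+\le f(x_0)$ is essentially the statement you are trying to prove. For the supersolution test your reasoning is backwards: a local minimum of $u-\phi$ \emph{relative to $(0,1]^n$} is a weaker requirement on $\phi$ than a local minimum relative to a full neighborhood, so the class of admissible test functions for \eqref{P1} at such a point is \emph{larger} than the class for \eqref{eq:Pintro}, and the supersolution property is therefore \emph{not} "inherited directly" from the larger domain --- it is an additional condition. The paper's own $n=2$ example ($u=2\sqrt{x_1x_2}$ failing the plain subsolution test at $(1,1)$) is precisely a demonstration that restriction to a closed subdomain does not preserve viscosity properties at the new boundary.

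The conclusion can be rescued, but it requires a real argument: given $\phi$ touching at $x_0$ with $x_{0,i}=1$ for $i\in I$, one penalizes with $\pm\frac{1}{\eps}\sum_{i\in I}(x_i-1)_+^2 \pm |x-x_0|^2$ to produce interior extrema $x_\eps\to x_0$ in $(0,\infty)^n$, applies the interior viscosity inequality for \eqref{eq:Pintro} there, and then uses the fact that the Hamiltonian $p\mapsto \prod_i (p_i)_+$ is \emph{directed} (Definition \ref{def:directed}) to discard the penalty's contribution to the gradient, which has a definite sign. None of this appears in your write-up. You should also be aware that the paper avoids the issue entirely by a completely different route: it shows that the numerical solutions $u_h$ of \eqref{S1} are uniformly H\"older (a lemma from \cite{calder2015pde}), so by Theorem \ref{thm:bs-conv} and Lemma \ref{lem:comp} they converge uniformly to the nondecreasing viscosity solution of \eqref{P1}, while the convergence theorem of \cite{calder2015pde} says they also converge to $u$; the two limits must coincide. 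That argument gets the correct behavior at the upper boundary for free, because the upwind scheme is well defined and monotone at grid points with $x_i=1$.
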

\begin{proof}
By Lemma \cite[Lemma 3.3]{calder2015pde}, the numerical solutions $u_h$ of \eqref{S1} satisfy the estimate
\[|u_h(x) - u_h(y)| \leq C(|x-y|^\frac{1}{n} + h^\frac{1}{n})\]
for all $x,y \in [0,1]^n_h$, where $C = C(n,\sup_{[0,1]^n}f)$. Combined with Theorem \ref{thm:bs-conv}, this shows that $u_h$ converges uniformly on $[0,1]^n$ to the unique nondecreasing viscosity solution of \eqref{P1}. By \cite[Theorem 3.4]{calder2015pde}, we also have $u_h \to u$ uniformly on $[0,1]^n$, which completes the proof.
\end{proof}

\subsection{The HJ-equation \eqref{P2}}
\label{sec:P2}

In this section, we establish a comparison principle for \eqref{P2}. When $f \in C([0,1]^n)$, the function $v = u^n/n^n$ is a nondecreasing viscosity solution of \eqref{P2} (see Lemma \ref{lem:nonuniq}). We will call $v=u^n/n^n$ the \emph{maximal} viscosity solution of \eqref{P2} (see Lemma \ref{lem:vmaximal}).

Since \eqref{P2} has a zeroth order term of the wrong sign for comparison to hold directly, we find that \eqref{P2} actually has infinitely many nondecreasing viscosity solutions.
\begin{lemma}\label{lem:nonuniq}
Let $f \in C([0,1]^n)$ be nonnegative, let $y \in [0,1]^n$, and let $u$ be the nondecreasing viscosity solution of 
\begin{equation}\label{eq:Pshift}
\left.
\begin{aligned}
(u_{x_1})_+ \cdots (u_{x_n})_+ &= f& &\text{in } \prod_{i=1}^n (y_i,1]\\
u &=0& &\text{on } \prod_{i=1}^n [y_i,1]\setminus \prod_{i=1}^n (y_i,1],
\end{aligned}\right\}
\end{equation}
and extend $u$ to $[0,1]^n$ by setting $u(x) = 0$ for $x\in [0,1]^n \setminus\prod_{i=1}^n [y_i,1]$. Then $v = u^n/n^n$ is a nondecreasing viscosity solution of \eqref{P2}.
\end{lemma}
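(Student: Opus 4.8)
The plan is to verify directly that $v = u^n/n^n$ satisfies the viscosity sub- and supersolution properties for \eqref{P2} on $(0,1]^n$, together with the boundary condition, using the fact that $u$ is the nondecreasing viscosity solution of the shifted problem \eqref{eq:Pshift}. The monotonicity of $u$ (each partial is $\geq 0$ in the viscosity sense) is crucial here: it lets us replace $(u_{x_i})_+$ by $u_{x_i}$ on the open domain $\prod_i (y_i,1]$, and more importantly it guarantees that $v = u^n/n^n$ is also nondecreasing, so that $(v_{x_i})_+ = v_{x_i}$ there as well. First I would record the pointwise chain-rule identity: if $\phi$ is a smooth test function touching $v$ from above (resp.\ below) at a point $x_0$ in the interior region where $u(x_0) > 0$, then $\psi := n(\phi n^{n-1})^{1/n}$—more carefully, the function whose $n$-th power over $n^n$ equals $\phi$ near $x_0$—is a smooth test function touching $u$ from above (resp.\ below) at $x_0$, because $t \mapsto t^n/n^n$ is smooth and strictly increasing on a neighborhood of $u(x_0) > 0$. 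Differentiating $\phi = \psi^n/n^n$ gives $\phi_{x_i} = \psi^{n-1} n^{-n} \psi_{x_i} \cdot n = \psi^{n-1}n^{1-n}\psi_{x_i}$; hmm, let me just say $\phi_{x_i} = (\psi^{n-1}/n^{n-1})\,\psi_{x_i}$, so $\prod_i \phi_{x_i} = (\psi^{n(n-1)}/n^{n(n-1)}) \prod_i \psi_{x_i} = v^{n-1}\prod_i \psi_{x_i}$ evaluated at $x_0$. Feeding the viscosity inequality $\prod_i \psi_{x_i}(x_0) \lessgtr f(x_0)$ for $u$ through this identity yields exactly $\prod_i (\phi_{x_i}(x_0))_+ \lessgtr v(x_0)^{n-1} f(x_0)$, which is the required inequality for \eqref{P2}.

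The main obstacle is the set where $u(x_0) = 0$, since there $t \mapsto t^n/n^n$ has vanishing derivative (for $n \geq 2$) and the change of test function breaks down. I would handle this in two sub-cases. For the supersolution inequality at a point with $u(x_0) = v(x_0) = 0$: a test function $\phi$ touching $v$ from below at $x_0$ with $v \geq 0$ and $v(x_0) = 0$ necessarily has $\nabla\phi(x_0)$ with each component... actually since $x_0$ is interior (in the domain $(0,1]^n$) and $\phi \leq v$ with equality at $x_0$, $x_0$ is an interior min of $\phi - v$... the cleanest route is to note $v \equiv 0$ is itself a subsolution-friendly barrier: at such $x_0$, $(\phi_{x_i}(x_0))_+$ — I claim the product is $0 \leq v(x_0)^{n-1} f(x_0) = 0$, so the supersolution inequality is automatic unless all $(\phi_{x_i}(x_0))_+ > 0$; but if $v$ attains its minimum value $0$ at an interior point of the region where $u>0$ nearby... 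This needs the structure: the zero set of $u$ is exactly $[0,1]^n \setminus \prod_i[y_i,1]$ plus possibly a piece of $\prod_i[y_i,1]$ where $f$-mass is zero, and on the interior of $\{u = 0\}$ any test function from below has a critical point, forcing some $\phi_{x_i}(x_0) \leq 0$ hence the product of positive parts is $0$. For the subsolution inequality where $v(x_0) = 0$: a test function from above at an interior-min-of-$\phi - v$ with $v\geq 0$, $v(x_0)=0$ forces $\nabla \phi(x_0) = 0$ if $x_0$ is in the open set $\{v = 0\}$, giving product $0 \leq 0$; and if $x_0$ is on the boundary of $\{v>0\}$ one argues by the corresponding property of $u$ (since $u = 0$ there too and $u$ is a subsolution, with $\psi = \phi$-type test functions now permissible because near a zero of $u$ from above we can take $\psi = (n^n\phi)_+^{1/n}$... actually $\psi = n(\phi/1)^{1/n}$ when $\phi \geq 0$). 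I would also need to separately treat the gluing across $\Gamma_y := \prod_i[y_i,1]\setminus\prod_i(y_i,1]$, where $u$ jumps from its extension-by-zero to its positive interior values: there the extended $u$ is lower semicontinuous and equals $0$, and the subsolution property of $v$ on $(0,1]^n$ near such points uses that no smooth function can touch $v$ from above there without having nonpositive derivative in the relevant direction (because $v = 0$ on one side).

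Finally I would check the boundary condition $v = 0$ on $\Gamma$: on $\Gamma \cap \prod_i[y_i,1]$ this is inherited from $u = 0$ on that part of $\partial(\prod_i(y_i,1])$, and on the rest of $\Gamma$ it holds because $u$ was extended by $0$. The monotonicity of $v$ is immediate from that of $u$ and the monotonicity of $t\mapsto t^n/n^n$ on $[0,\infty)$. I expect the interior chain-rule computation to be routine once the test-function substitution is set up, so the real work — and where I would spend the most care — is the boundary-of-the-zero-set analysis described above; a clean way to package it may be to observe that $v$ is nondecreasing and nonnegative, so at any interior local max of $\phi - v$ with $v(x_0) = 0$ one has $\phi_{x_i}(x_0) \leq 0$ for the indices $i$ with $x_0$ not on the lower face, making $\prod_i(\phi_{x_i}(x_0))_+ = 0$ outright, which disposes of the subsolution inequality wherever $v = 0$ without any reference to $u$ at all.
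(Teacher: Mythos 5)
Your proposal matches the paper's proof: where $v(x_0)>0$ you substitute $\psi = n\phi^{1/n}$ and transfer the viscosity inequalities from $u$ through the chain rule, and where $v(x_0)=0$ you use that $v$ is nonnegative and nondecreasing to force $\phi_{x_i}(x_0)\leq 0$ for every $i$ in the subsolution case (and to note the supersolution inequality is trivial since its right-hand side vanishes) --- exactly the paper's two-case argument. The only blemishes are expository: in your final packaging ``local max of $\phi - v$'' should read ``local max of $v-\phi$'' (as written, the monotonicity argument would give $\phi_{x_i}(x_0)\geq 0$ instead), and the separate worry about gluing across $\prod_{i}[y_i,1]\setminus\prod_{i}(y_i,1]$ is unnecessary, since $v=0$ at such points and they are already absorbed into the monotonicity case.
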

\begin{proof}
Let $x_0 \in (0,1]^n$ and let $\phi \in C^1([0,1]^n)$ such that $v - \phi$ has a local maximum at $x_0$. We also assume that $\phi(x_0)=v(x_0)$. If $v(x_0) = 0$ then since $u$ is nondecreasing, we see that $v(x) = 0$ for all $x$ that are coordinatewise less than $x_0$. It follows that $\phi_{x_i}(x_0) \leq 0$ for all $i$, and the subsolution property is trivially satisfied. If $v(x_0)>0$, then $u(x_0)>0$ and therefore $x_0 \in \prod_{i=1}^n (y_i,1]$. Setting $\psi(x) = n\phi(x)^\frac{1}{n}$ we find that $u - \psi$ has a local maximum at $x_0$.
Therefore
\[(\psi_{x_1}(x_0))_+ \cdots (\psi_{x_n}(x_0))_+ \leq f(x_0).\]
Since $\phi(x_0)=v(x_0)>0$, this becomes
\[(\phi_{x_1}(x_0))_+ \cdots (\phi_{x_n}(x_0))_+ \leq v(x_0)^{n-1}f(x_0),\]
which verifies the subsolution property. The proof of the supersolution property is similar.
\end{proof}

The lack of uniqueness of nondecreasing viscosity solutions of \eqref{P2} indicates that we cannot expect a comparison principle to hold for arbitrary sub- and supersolutions of \eqref{P2}. However, we show in the following lemma that every subsolution is bounded above by $v=u^n/n^n$. This turns out to be sufficient to prove convergence of \eqref{S2}.
\begin{lemma}\label{lem:vmaximal}
Let $f \in C([0,1]^n)$ be nonnegative and let $v \in \usc([0,1]^n)$ be a nonnegative viscosity subsolution of \eqref{P2}. Then $v \leq u^n/n^n$ on $[0,1]^n$, where $u$ is the nondecreasing viscosity solution of \eqref{P1}.
\end{lemma}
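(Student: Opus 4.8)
The plan is to reduce the claim to the comparison principle of Lemma~\ref{lem:comp} via the change of variables $\tilde u := n v^{1/n}$. Since $v\in\usc([0,1]^n)$ is nonnegative and $t\mapsto n t^{1/n}$ is continuous and nondecreasing on $[0,\infty)$, the function $\tilde u$ is again nonnegative and upper semicontinuous, and because the subsolution boundary condition for \eqref{P2} together with $v\ge 0$ forces $v=0$ on $\Gamma$, we get $\tilde u=0$ on $\Gamma$. Thus $\tilde u$ is an admissible candidate subsolution for \eqref{P1}. If I can show that $\tilde u$ is in fact a viscosity subsolution of \eqref{P1}, then Lemma~\ref{lem:comp}, applied with the subsolution $\tilde u$ and the nondecreasing supersolution $u$, gives $\tilde u\le u$ on $[0,1]^n$, which is precisely $v\le u^n/n^n$ (and forces $u\ge 0$ automatically).

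To check the subsolution property, fix $x_0\in(0,1]^n$ and $\phi\in C^1([0,1]^n)$ such that $\tilde u-\phi$ attains a local maximum at $x_0$, normalized so that $\phi(x_0)=\tilde u(x_0)$. Suppose first $v(x_0)>0$, so $\phi(x_0)=\tilde u(x_0)>0$ and hence $\phi>0$ on a neighborhood $U$ of $x_0$; there the function $\psi:=(\phi/n)^n$ is $C^1$. Since $t\mapsto(t/n)^n$ is nondecreasing on $[0,\infty)$ and $\tilde u\le\phi$ on $U$ with equality at $x_0$, it follows that $v=(\tilde u/n)^n\le(\phi/n)^n=\psi$ on $U$ with equality at $x_0$, i.e.\ $v-\psi$ has a local maximum at $x_0$. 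Applying the subsolution property of $v$ for \eqref{P2}, and using $\psi_{x_i}(x_0)=(\phi(x_0)/n)^{n-1}\phi_{x_i}(x_0)$ together with the positivity of $(\phi(x_0)/n)^{n-1}$, the inequality $\prod_{i=1}^n(\psi_{x_i}(x_0))_+\le v(x_0)^{n-1}f(x_0)$ becomes $(\phi(x_0)/n)^{n(n-1)}\prod_{i=1}^n(\phi_{x_i}(x_0))_+\le(\phi(x_0)/n)^{n(n-1)}f(x_0)$, which upon dividing by $(\phi(x_0)/n)^{n(n-1)}>0$ yields $\prod_{i=1}^n(\phi_{x_i}(x_0))_+\le f(x_0)$, as needed. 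If instead $v(x_0)=0$, then $\phi(x_0)=0$, and $\tilde u\le\phi$ near $x_0$ together with $\tilde u\ge 0$ forces $\phi\ge 0=\phi(x_0)$ near $x_0$; hence $x_0$ is a local minimum of $\phi$ relative to $(0,1]^n$, which gives $\phi_{x_i}(x_0)=0$ whenever $x_{0,i}<1$ and $\phi_{x_i}(x_0)\le 0$ whenever $x_{0,i}=1$. In either case $(\phi_{x_i}(x_0))_+=0$, so $\prod_{i=1}^n(\phi_{x_i}(x_0))_+=0\le f(x_0)$ and the subsolution inequality holds trivially.

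The only genuinely delicate point is the change of variables in the case $v(x_0)>0$: one must verify that $\psi=(\phi/n)^n$ is a legitimate $C^1$ test function near $x_0$, which is exactly why the normalization $\phi(x_0)=\tilde u(x_0)>0$ together with continuity of $\phi$ is invoked to guarantee $\phi>0$ on a neighborhood. The degenerate level set $\{v=0\}$, where $t\mapsto t^{1/n}$ fails to be differentiable, is disposed of separately by the elementary local-minimum argument above, after which the conclusion is immediate from Lemma~\ref{lem:comp}.
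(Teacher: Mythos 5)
Your proof is correct and follows essentially the same route as the paper: both define $\tilde u = n v^{1/n}$, verify that it is a viscosity subsolution of \eqref{P1} by testing $v$ with $\psi=(\phi/n)^n$, cancelling the factor $v(x_0)^{n-1}=(\phi(x_0)/n)^{n(n-1)/n\cdot n/(n)}$ when $v(x_0)>0$ and treating $v(x_0)=0$ separately, and then conclude via Lemma~\ref{lem:comp}. Your handling of the degenerate case $v(x_0)=0$ (observing that $x_0$ is a local minimum of $\phi$ and dealing with the directions where $x_{0,i}=1$) is in fact slightly more careful than the paper's one-line dismissal of that case.
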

\begin{proof}
We define $u_1 = nv^\frac{1}{n}$. Let $x_0 \in (0,1]^n$ and $\phi \in C^1([0,1]^n)$ such that $u_1 - \phi$ has a local maximum at $x_0$ and $\phi(x_0) = u_1(x_0)$. Letting $\psi(x) = \phi(x)^n/n^n$, we see that $v - \psi$ has a local maximum at $x_0$ and therefore
\[(\psi_{x_1}(x_0))_+ \cdots (\psi_{x_n}(x_0))_+ \leq v(x_0)^{n-1} f(x_0).\]
Since $D\psi(x_0) = v(x_0)^\frac{n-1}{n}D\phi(x_0)$, we have
\[v(x_0)^{n-1}(\phi_{x_1}(x_0))_+ \cdots (\phi_{x_n}(x_0))_+ \leq v(x_0)^{n-1}f(x_0).\]
If $v(x_0)>0$ then 
\begin{equation}\label{eq:suba}
(\phi_{x_1}(x_0))_+ \cdots (\phi_{x_n}(x_0))_+ \leq f(x_0).
\end{equation}
If $v(x_0) = 0$, then since $v$ is nonnegative, $\phi_{x_i}(x_0)\leq 0$ for all $i$, which verifies \eqref{eq:suba}. By Lemma \ref{lem:comp}, $u_1 \leq u$, where $u$ is the unique nondecreasing viscosity solution of \eqref{P1}.
\end{proof}

\subsection{The HJ-equation \eqref{P3}}
\label{sec:P3}

Before establishing a comparison principle for \eqref{P3}, let us comment on the properties of solutions of \eqref{P3}. Let $f \in C([0,1]^n)$ be nonnegative and let $u$ be the nondecreasing viscosity solution of \eqref{P1}. By Lemma \ref{lem:comp} we have
\[0 \leq u(x) \leq n(x_1\cdots x_n)^\frac{1}{n} \sup_{[0,1]^n}f^\frac{1}{n} \ \ \text{ for all } x \in (0,1]^n.\]
Setting $w(x) = n^{-1}(x_1\cdots x_n)^{-\frac{1}{n}}u(x) \in C((0,1]^n)$ we have
\[0 \leq w(x) \leq \sup_{[0,1]^n}f^\frac{1}{n} \ \ \text{ for all } x \in (0,1]^n.\]
We also have that $w$ is a viscosity solution of \eqref{P3} that satisfies 
\begin{equation}\label{eq:wcond}
w + nx_i w_{x_i} \geq 0 \ \ \text{ on } (0,1]^n
\end{equation}
in the viscosity sense for all $i$. To see this, let $x_0 \in (0,1]^n$ and $\phi\in C^1((0,1]^n)$ such that $w-\phi$ has a local minimum at $x_0$. We can also assume $\phi(x_0)=w(x_0)$, so that $w \geq \phi$ in a neighborhood of $x_0$. Setting $\psi(x) = n(x_1\cdots x_n)^\frac{1}{n}\phi(x)$ it follows that $u \geq \psi$ in a neighborhood of $x_0$ and $u(x_0)=\psi(x_0)$. Therefore $u-\psi$ has a local minimum at $x_0$. Since $u$ is nondecreasing, $\phi_{x_i}(x_0)\geq 0$ for all $i$ and 
\[\psi_{x_1}(x_0) \cdots \psi_{x_n}(x_0) \geq f(x_0).\]
A simple computation shows that 
\[\prod_{i=1}^n (w(x_0) + nx_i \phi_{x_i}(x_0)) \geq f(x_0),\]
and 
\[w(x_0) + nx_i \phi_{x_i}(x_0) \geq 0  \ \ \ \text{ for all } i.\]
The subsolution property is verified similarly. We will call $w$ the \emph{maximal bounded} viscosity solution of \eqref{P3}.

Notice the boundary condition $u=0$ on $\Gamma$ is only used to show that $w$ is bounded. Indeed, it is clear that the argument above holds when $u$ is \emph{any} viscosity solution of 
\[(u_{x_1})_+ \cdots (u_{x_n})_+ = f \ \ \ \text{ in } (0,1]^n.\]
This yields an infinite number of unbounded viscosity solutions of \eqref{P3}. For instance, when $f\equiv 1$ the function
\[w(x) = \prod_{i=1}^n (1 + Cx_i^{-1})^\frac{1}{n}\]
for any $C\geq0$ is a viscosity solution of \eqref{P3}. Taking $C=0$ gives the bounded viscosity solution of interest from \eqref{eq:uansatz}. The following theorem characterizes this solution as the unique bounded viscosity solution of \eqref{P3} satisfying \eqref{eq:wcond}.
\begin{theorem}\label{thm:comparison-w}
Assume that $f \in C([0,1]^n)$ is nonnegative. Let $w_1 \in \usc((0,1]^n)$ and $w_2 \in \lsc((0,1]^n)$ be bounded viscosity sub- and supersolutions of \eqref{P3}, respectively, and suppose that $w_2$ satisfies \eqref{eq:wcond} in the viscosity sense for all $i$. Then $w_1\leq w_2$ on $(0,1]^n$.
\end{theorem}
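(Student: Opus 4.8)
The plan is to reduce the statement to the comparison principle already established in Lemma~\ref{lem:comp} by reversing the substitution \eqref{eq:uansatz}. Given the bounded sub- and supersolutions $w_1,w_2$ of \eqref{P3}, with $w_2$ also satisfying \eqref{eq:wcond}, I would set
\[
u_j(x) = n(x_1\cdots x_n)^\frac{1}{n}\, w_j(x) \ \text{ for } x \in (0,1]^n, \qquad u_j(x) = 0 \ \text{ for } x \in \Gamma \qquad (j=1,2).
\]
Since $w_j$ is bounded and $(x_1\cdots x_n)^\frac{1}{n}\to 0$ as $x\to\Gamma$, each $u_j$ satisfies $u_j\to 0$ on $\Gamma$; together with the fact that multiplying by the strictly positive continuous factor $n(x_1\cdots x_n)^\frac{1}{n}$ preserves upper (resp.\ lower) semicontinuity on $(0,1]^n$, this gives $u_1\in\usc([0,1]^n)$ and $u_2\in\lsc([0,1]^n)$. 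The goal is then to verify that $u_1$ is a viscosity subsolution of \eqref{P1} and that $u_2$ is a \emph{nondecreasing} viscosity supersolution of \eqref{P1}; Lemma~\ref{lem:comp} then gives $u_1\le u_2$ on $[0,1]^n$, and dividing by the positive factor yields $w_1\le w_2$ on $(0,1]^n$.

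The core of the matter is the transfer of test functions, which is just the computation in the discussion preceding this theorem run backwards. Suppose $\phi\in C^1$ touches $u_1$ from above at $x_0\in(0,1]^n$, normalized so that $\phi(x_0)=u_1(x_0)$. Since $(x_1\cdots x_n)^{-\frac{1}{n}}$ is smooth near $x_0$, the function $\psi=n^{-1}(x_1\cdots x_n)^{-\frac{1}{n}}\phi$ is $C^1$ near $x_0$, touches $w_1$ from above there, and satisfies $\psi(x_0)=w_1(x_0)$. A product-rule computation gives, at $x_0$,
\[
\phi_{x_i}(x_0) = (x_{0,1}\cdots x_{0,n})^\frac{1}{n}\, x_{0,i}^{-1}\bigl(w_1(x_0)+nx_{0,i}\psi_{x_i}(x_0)\bigr),
\]
so that $(\phi_{x_i}(x_0))_+ = (x_{0,1}\cdots x_{0,n})^\frac{1}{n} x_{0,i}^{-1}(w_1(x_0)+nx_{0,i}\psi_{x_i}(x_0))_+$, and multiplying over $i$ collapses the prefactors to $1$:
\[
\prod_{i=1}^n(\phi_{x_i}(x_0))_+ = \prod_{i=1}^n\bigl(w_1(x_0)+nx_{0,i}\psi_{x_i}(x_0)\bigr)_+ \le f(x_0),
\]
the final inequality being the subsolution property of $w_1$ for \eqref{P3}. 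Thus $u_1$ satisfies the subsolution inequality of \eqref{P1} in $(0,1]^n$, and since $u_1=0$ on $\Gamma$ it is a viscosity subsolution of \eqref{P1}. The symmetric argument at a point where a test function touches $u_2$ (equivalently $w_2$) from below, using that $w_2$ is a supersolution of \eqref{P3}, gives $\prod_{i=1}^n(\phi_{x_i}(x_0))_+\ge f(x_0)$. This is also precisely where hypothesis \eqref{eq:wcond} is needed: it asserts $w_2(x_0)+nx_{0,i}\psi_{x_i}(x_0)\ge0$ at such points, and the displayed identity (with $w_2$ in place of $w_1$) then forces $\phi_{x_i}(x_0)\ge0$ for every $i$, i.e.\ $u_2$ is nondecreasing. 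One can view \eqref{eq:wcond} as the infinitesimal form of the assertion that $x\mapsto(x_1\cdots x_n)^\frac{1}{n}w_2(x)$ is nondecreasing, since $\partial_{x_i}\bigl(x_i^\frac{1}{n}w_2\bigr)=\tfrac{1}{n}x_i^{\frac{1}{n}-1}(w_2+nx_iw_{2,x_i})$.

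The main obstacle is bookkeeping rather than a genuine new difficulty. One must keep the positive parts intact throughout, since $w_1$ is not assumed nonnegative; one must reinstate the ``encoded'' boundary of \eqref{P3} as the explicit Dirichlet boundary of \eqref{P1}, which is exactly where boundedness of $w_1,w_2$ gets used; and ``nondecreasing'' for $u_2$ is only obtained in the viscosity sense, so one should either note that the proof of Lemma~\ref{lem:comp} only invokes the viscosity version, or upgrade it by a standard regularization (an $\lsc$ function that is nondecreasing in the viscosity sense is nondecreasing). A self-contained alternative would be to run the usual doubling-of-variables comparison directly on \eqref{P3}: the same transfer identity reappears, but one then has to contend with the possibility that the doubled maximum points escape to the degenerate faces $\{x_i=0\}$, where \eqref{P3} is not posed, so that \eqref{eq:wcond} must be spent on a barrier keeping them inside $(0,1]^n$ --- work that Lemma~\ref{lem:comp} has already absorbed, which is why the reduction is the cleaner route.
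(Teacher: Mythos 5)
Your proposal is correct and follows essentially the same route as the paper: transform $w_j$ into $u_j = n(x_1\cdots x_n)^{1/n}w_j$ extended by zero on $\Gamma$, transfer test functions via $\psi = n^{-1}(x_1\cdots x_n)^{-1/n}\phi$, use \eqref{eq:wcond} to conclude that $u_2$ is a nondecreasing supersolution of \eqref{P1} (the paper likewise invokes the viscosity-sense-to-pointwise upgrade, citing Lemma 5.17 of Bardi--Capuzzo-Dolcetta), and finish with Lemma \ref{lem:comp}. The gradient identity you compute is exactly the one in the paper's proof, written in the inverse direction.
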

\begin{proof}
For $i=1,2$, we define 
\[u_i(x) = \begin{cases}
 n(x_1\cdots x_n)^\frac{1}{n} w_i(x),& \text{if } x \in (0,1]^n\\
 0,& \text{if } x \in \Gamma.\end{cases}\]
Since $w_1 \in \usc((0,1]^n)$ is bounded, $u_1 \in \usc([0,1]^n)$. Similarly, $u_2 \in \lsc([0,1]^n)$.

We first show that $u_2$ is a nondecreasing viscosity supersolution of \eqref{P1}. Let $y \in (0,1]^n$ and $\phi \in C^1((0,1]^n)$ such that $u_2 - \phi$ has a local minimum at $y$ and $u_2(y) = \phi(y)$.  We define
\[\psi(x) = \frac{\phi(x)}{n(x_1\cdots x_n)^\frac{1}{n}} \ \ \ \text{ for } x \in (0,1]^n.\]
Then it follows that $w_2 - \psi$ has a local minimum at $y$. Since $w_2$ is a viscosity supersolution of \eqref{P3} we have
\begin{equation}\label{eq:super}
\prod_{i=1}^n (w_2(y) + ny_i \psi_{x_i}(y))_+ \geq f(y).
\end{equation}
Since
\[\psi_{x_i}(y) = \frac{\phi_{x_i}(y)}{n(y_1\cdots y_n)^\frac{1}{n}} - \frac{w_2(y)}{ny_i},\]
and $w_2$ satisfies \eqref{eq:wcond} we have
\[0 \leq w_2(y) + ny_i \psi_{x_i}(y) = \frac{y_i \phi_{x_i}(y)}{(y_1\cdots y_n)^\frac{1}{n}}.\]
Therefore $\phi_{x_i}(y) \geq 0$ for all $i$ and 
\[\phi_{x_1}(y) \cdots \phi_{x_n}(y) \geq f(y).\]
This establishes that $u_2$ is a nondecreasing (e.g., see \cite[Lemma 5.17]{bardi1997}) viscosity supersolution of \eqref{P1}.

We can similarly show that $u_1$ is a viscosity subsolution of \eqref{P1}. The proof is completed by invoking Lemma \ref{lem:comp}. 
\end{proof}

\section{Convergence results for continuous $f$}
\label{sec:conv}

In this section we prove convergence of the schemes \eqref{S2} and \eqref{S3} under the assumption that $f$ is continuous and nonnegative. 
 
\subsection{The scheme (S2)}
\label{sec:v}

Let $h>0$ and $x \in (0,1]^n_h$. Given values for  $v_h(x-he_1),\cdots,v_h(x-he_n)$, we define $v_h(x)$ to be the largest solution of \eqref{S2}. If we let $a_i = v(x-he_i)$ for $i=1,\dots,n$ and $b = h^nf(x)$, then this is equivalent to finding the largest solution $t$ of 
\begin{equation}\label{eq:tau}
F(a_1,\dots,a_n,b,t) := \prod_{i=1}^n(t - a_i)_+ - bt^{n-1}=0.
\end{equation} 
 We define
  \[S(a_1,\dots,a_n,b) = \sup\Big\{t \in \R \, : \, F(a_1,\dots,a_n,b,t)=0\Big\}.\]
  Since $t=0$ is always a solution of \eqref{eq:tau}, it is easy to see that $S(x)$ is a nonnegative real number for all $x \in [0,\infty)^{n+1}$. With these definitions, the solution $v_h$ of \eqref{S2} satisfies $v_h(x) = 0$ for $x \in \Gamma$ and 
\begin{equation}\label{eq:maxSv}
v_h(x) = S(v_h(x-he_1),\dots,v_h(x-he_n),h^nf(x)) \ \ \text{ for all } x \in (0,1]^n_h.
\end{equation}
We shall refer to $v_h$ as the \emph{maximal} solution of \eqref{S2}. 

We now establish some important properties of $S$.
\begin{lemma}\label{lem:scheme}
Let $x \in [0,\infty)^{n+1}$.  Then
\begin{enumerate}[label=\emph{(\roman*)}]
\item $S(x) \geq \max\{x_1,\dots,x_n\}$ and $S(x) = \max\{x_1,\dots,x_n\}$ if and only if $x_{n+1}=0$,
\item $F(x,t) > 0$ for all $t > S(x)$, 
\item If $x_{n+1}>0$ then $F(x,t) < 0$ whenever $0 < t < S(x)$, and
\item $S:[0,\infty)^n \to [0,\infty)$ is nondecreasing in all variables.
\end{enumerate}
\end{lemma}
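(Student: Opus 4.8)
The plan is to fix $x \in [0,\infty)^{n+1}$, set $M := \max\{x_1,\dots,x_n\}$, and reduce all four claims to the shape of the one-variable slice $t \mapsto F(x,t)$ on $[0,\infty)$, together with its rescaling $G(t) := F(x,t)/t^{n-1}$ for $t > 0$. The key structural observation is that the positive parts only ``switch on'' at $t = M$: for $0 \le t \le M$ the factor $(t - x_j)_+$ with $x_j = M$ vanishes, so $F(x,t) = -x_{n+1}t^{n-1}$; for $t > M$ all positive parts are active, so $F(x,t) = \prod_{i=1}^n (t-x_i) - x_{n+1}t^{n-1}$, a monic polynomial of degree $n$. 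In particular $F(x,t) \to +\infty$ as $t \to \infty$, so the zero set of $F(x,\cdot)$ is nonempty (it contains $0$), closed, and bounded above; hence $S(x)$ is a finite nonnegative number and the supremum is attained. Claim (ii) is then immediate: on $(S(x),\infty)$ the function $F(x,\cdot)$ is continuous, nowhere zero, and tends to $+\infty$, hence positive.

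The crux is a strict-monotonicity fact: on $(M,\infty)$, the function $q(t) := \prod_{i=1}^n (t-x_i)/t^{n-1} = t\prod_{i=1}^n (1 - x_i/t)$ is \emph{strictly} increasing. This is a one-line check, e.g.\ from $q'(t)/q(t) = \tfrac{1}{t} + \sum_{i=1}^n \tfrac{x_i}{t(t-x_i)} > 0$ with $q > 0$, or by writing $q$ as a product of positive increasing functions. Since $G(t) = -x_{n+1}$ on $(0,M]$ and $G(t) = q(t) - x_{n+1}$ on $[M,\infty)$, and $q$ extends continuously to the left endpoint with value $0$, the function $G$ is continuous on $(0,\infty)$, equals $-x_{n+1}$ on $(0,M]$, and increases strictly from $-x_{n+1}$ to $+\infty$ on $[M,\infty)$. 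If $x_{n+1} = 0$ then $G \equiv 0$ on $(0,M]$ and $G > 0$ on $(M,\infty)$, so the largest zero of $F(x,\cdot)$ is $M$ and $S(x) = M$. If $x_{n+1} > 0$ then $G < 0$ on $(0,M]$ and $G$ has a unique zero $t^* \in (M,\infty)$, so $t^*$ is the only positive zero of $F(x,\cdot)$ and $S(x) = t^* > M = \max_i x_i$. These two cases together give (i). Moreover, when $x_{n+1} > 0$ we have $G < 0$ on $(0,t^*) = (0,S(x))$, hence $F(x,t) = t^{n-1} G(t) < 0$ there, which is (iii).

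For (iv), the plan is to use that for each fixed $t \ge 0$ the map $(x_1,\dots,x_{n+1}) \mapsto F(x,t) = \prod_{i=1}^n (t-x_i)_+ - x_{n+1}t^{n-1}$ is nonincreasing in every coordinate, since $a \mapsto (t-a)_+$ is nonincreasing and the remaining factors are nonnegative, while $-x_{n+1}t^{n-1}$ is nonincreasing in $x_{n+1}$. Hence if $x \le y$ coordinatewise then $F(y,S(x)) \le F(x,S(x)) = 0$; since $F(y,\cdot)$ is continuous and tends to $+\infty$, the intermediate value theorem yields a zero of $F(y,\cdot)$ in $[S(x),\infty)$, so $S(y) \ge S(x)$.

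The only genuine computation is the strict monotonicity of $q$ on $(M,\infty)$; equivalently, the statement that once $x_{n+1} > 0$ the slice $F(x,\cdot)$ has no zeros on $(0,M)$ and exactly one on $(M,\infty)$, which is really the heart of both (i) and (iii). The rest is bookkeeping about where the positive parts activate plus a sign chase. The one place to be mildly careful is the degenerate case $M = 0$ (i.e.\ $x_1 = \dots = x_n = 0$), where $(0,M]$ is empty and $G(t) = t - x_{n+1}$ outright; this is the simplest case rather than a genuine obstacle.
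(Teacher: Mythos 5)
Your proposal is correct, and it reaches the four claims by a genuinely different route than the paper. The paper works directly with $F$: for (ii) and (iii) it computes $F_t(x,t)$ at an arbitrary zero $t>\max_i x_i$ and shows $F_t>0$ there via an arithmetic–geometric mean estimate, and for (iv) it applies the implicit function theorem on the open set where $S(x)>\max_i x_i$, logarithmically differentiates the relation $F(x,S(x))=0$, and checks $S_{x_i}>0$, handling the degenerate cases by continuity. You instead divide by $t^{n-1}$ and observe that $G(t)=F(x,t)/t^{n-1}$ equals the constant $-x_{n+1}$ up to $M=\max_i x_i$ and then equals $q(t)-x_{n+1}$ with $q(t)=t\prod_i(1-x_i/t)$ strictly increasing to $+\infty$; this single monotonicity fact delivers (i) and (iii) at once and makes the structure of the zero set transparent (a unique positive zero when $x_{n+1}>0$). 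Your argument for (iv) — that $F(\cdot,t)$ is nonincreasing in $x$ for fixed $t$, so $F(y,S(x))\leq F(x,S(x))=0$ and the intermediate value theorem forces a zero of $F(y,\cdot)$ in $[S(x),\infty)$ — is markedly more elementary than the paper's and avoids smoothness of $S$ entirely; your (ii) likewise needs nothing beyond continuity and $F\to+\infty$. The one thing the paper's harder computation buys that your soft argument does not is the quantitative bound $S_{x_i}\leq 1$, which the paper records in Remark \ref{rem:lipS} to obtain $S(x)\leq\sum_i x_i$ and later uses to choose the initial interval for the bisection search; if you only need the lemma as stated, your proof is complete and arguably cleaner.
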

\begin{proof}
By symmetry, we may assume that $x_1\leq x_2\leq \cdots \leq x_n$.

For (i), we simply note that $F(x,x_n) \leq 0$ and $\lim_{t\to \infty} F(x,t) = \infty$. Therefore there exists $t \geq x_n$ such that $F(x,t)=0$. It follows that $S(x)\geq t\geq x_n$. If $x_{n+1} = 0$, then clearly $S(x) = x_n$. Conversely, suppose that $x_{n+1}>0$. If $x_n=0$ then $S(x) = x_{n+1}>0=x_n$. If $x_n > 0$, then $F(x,x_n)<0$ and hence $S(x)>x_n$.

For (ii) If $x_{n+1}=0$, then $S(x) = x_n$, and it is clear that $F(x,t)>0$ for all $t>S(x)$. If $x_{n+1}>0$, then by (i), $S(x)>x_n$ and $F(x,S(x))=0$.  For any $t>x_n$ such that $F(x,t)=0$ we have
\begin{align}\label{eq:Ftpos}
F_t(x,t) &= \sum_{j=1}^n \prod_{i\neq j} (t-x_i) - (n-1)x_{n+1}t^{n-2}\notag\\
&= x_{n+1}t^{n-1}\sum_{j=1}^n \frac{1}{t-x_j} - (n-1)x_{n+1}t^{n-2}\notag\\
&\geq n x_{n+1}t^{n-2} - (n-1)x_{n+1}t^{n-2}= x_{n+1}t^{n-2}>0.
\end{align}
It follows that $F(x,t) > 0$ for all $t>S(x)$.  This establishes (ii).

For (iii), suppose first that $x_n=0$. Then $F(x,t) = t^n - x_{n+1}t^{n-1}$ and $S(x) = x_{n+1}$, from which (iii) immediately follows. If $x_n>0$, then since $x_{n+1}>0$ we have $F(x,t)<0$ for all $0 < t \leq x_n$. Therefore there exists $\eps>0$ such that $F(x,t)<0$ for $0 < t < x_n+ \eps$. Define
\[\tau = \sup \Big\{ t \in \R \, : \, F(x,s) < 0 \text{ for all } s \in (0,t)\Big\}.\]
Clearly $F(x,\tau)=0$ and $\tau > x_n+\eps$. For any $t\geq\tau$ satisfying $F(x,t)=0$, we have by \eqref{eq:Ftpos} that $F_t(x,t)>0$. It follows that $\tau = S(x)$, which establishes (iii).

For (iv) we set
\[U = \big\{x \in [0,\infty)^{n+1} \, : \, \max\{x_1,\dots,x_n\}>0 \ \text{ and } \ x_{n+1}>0\big\}.\] 
By (i), $S(x)>x_n$ for every $x \in U$. Therefore $F_t(x,S(x))>0$ for all $x \in U$, and it follows from the implicit function theorem that the restriction of $S$ to $U$ is smooth. 

Since $F(x,S(x))=0$ we have
\begin{equation}\label{eq:defS}
\sum_{j=1}^n\log(S(x) - x_j) =\log(x_{n+1}) + (n-1)\log(S(x)).
\end{equation}
Differentiating \eqref{eq:defS} in $x_i$ for $i\in \{1,\dots,n\}$ we obtain
\[\left(\sum_{j=1}^n \frac{1}{S(x) - x_j} - \frac{n-1}{S(x)}\right)S_{x_i}(x) = \frac{1}{S(x)-x_i}.\]
Since $S(x)>x_j$ for all $j\in \{1,\dots,n\}$, we have
\[\sum_{j=1}^n \frac{1}{S(x) - x_j} \geq \sum_{j=1}^n \frac{1}{S(x)}= \frac{n}{S(x)}.\]
It follows that $S_{x_i}(x)>0$. Differentiating \eqref{eq:defS} in $x_{n+1}$ we have
\[\left(\sum_{j=1}^n \frac{1}{S(x) - x_j} - \frac{n-1}{S(x)}\right)S_{x_{n+1}}(x) = \frac{1}{x_{n+1}}.\]
As before, it follows that $S_{x_{n+1}}(x)>0$. Therefore $S$ is strictly increasing on $U$. 

If $x_{n+1}=0$, then $S(x) = \max\{x_1,\dots,x_n\}$ is nondecreasing. If $x_n = 0$, then $S(x) = x_{n+1}$ is again nondecreasing. The continuity of $S$ establishes (iv).
\end{proof}
\begin{remark}\label{rem:lipS}
In the proof of Lemma \ref{lem:scheme} (iv), we can use the inequality 
\[\sum_{j=1}^n \frac{1}{S(x) - x_j} \geq \frac{1}{S(x) - x_i} + \sum_{j\neq i} \frac{1}{S(x)}= \frac{1}{S(x) - x_i}+  \frac{n-1}{S(x)}\]
to find that $S_{x_i}(x) \leq 1$ for all $x \in U$ and $i\in \{1,\dots,n\}$. Since $S(0,\dots,0,x_{n+1}) = x_{n+1}$, we have the bound
\begin{equation}\label{eq:boundS}
S(x) \leq \sum_{i=1}^{n+1} x_i.
\end{equation}
\end{remark}
Using the properties of $S$ from Lemma \ref{lem:scheme} we can establish a comparison principle for the scheme \eqref{S2}.
\begin{theorem}\label{thm:Svcomp}
Let $h>0$ and suppose $f\geq 0$. Let $v_1$ be a subsolution of \eqref{S2} and let $v_2$ be a supersolution of \eqref{S2} satisfying 
\begin{equation}\label{eq:Ssuper}
v_2(x) \geq S(v_2(x-he_1),\dots,v_2(x-he_n),h^nf(x)) \ \ \text{ for all } x \in (0,1]^n_h.
\end{equation}
Then $v_1 \leq v_2$ on $[0,1]^n_h$.
\end{theorem}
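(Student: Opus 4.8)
My plan is to prove $v_1\le v_2$ pointwise on $[0,1]^n_h$ by induction on the \emph{level} $m(x):=(x_1+\cdots+x_n)/h$, which is the natural ordering for \eqref{S2} because of its one-pass (upwind) structure: for $x\in(0,1]^n_h$ the scheme relates the value at $x$ only to $f(x)$ and to the values at the neighbours $x-he_1,\dots,x-he_n$, and for such $x$ every coordinate satisfies $x_i\ge h$, so each neighbour again lies in $[0,1]^n_h$ and has level $m(x)-1$. Thus $m$ is well-founded on the finite set $[0,1]^n_h$, and information propagates strictly from $\Gamma_h$ inward.

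The first step is to restate both hypotheses through the root-selection map $S$ of Lemma \ref{lem:scheme}. For the supersolution this is exactly the assumed inequality \eqref{eq:Ssuper}, together with $v_2\ge 0$ on $\Gamma_h$. For the subsolution, the scheme inequality $\prod_i(D^-_iv_1(x))_+\le v_1(x)^{n-1}f(x)$ reads, in the notation of \eqref{eq:tau}, as $F\big(v_1(x-he_1),\dots,v_1(x-he_n),h^nf(x),v_1(x)\big)\le 0$; since Lemma \ref{lem:scheme}(ii) says $F(\,\cdot\,,t)>0$ for every $t$ larger than the largest root of $F$, this forces $v_1(x)\le S\big(v_1(x-he_1),\dots,v_1(x-he_n),h^nf(x)\big)$ on $(0,1]^n_h$, together with $v_1\le 0$ on $\Gamma_h$.

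The induction itself is then short. In the base case $m=0$ the only grid point is $x=0\in\Gamma_h$, where $v_1(0)\le 0\le v_2(0)$. For the inductive step, assume $v_1\le v_2$ at every point of level $\le m-1$ and take $x$ with $m(x)=m$. If $x\in\Gamma_h$, the boundary conditions give $v_1(x)\le 0\le v_2(x)$ directly. If $x\in(0,1]^n_h$, then each neighbour $x-he_i$ has level $m-1$, so the inductive hypothesis gives $v_1(x-he_i)\le v_2(x-he_i)$; chaining the subsolution bound, the monotonicity of $S$ in all $n+1$ arguments (Lemma \ref{lem:scheme}(iv)), and the supersolution bound \eqref{eq:Ssuper} gives
\[
v_1(x)\le S\big(v_1(x-he_1),\dots,v_1(x-he_n),h^nf(x)\big)\le S\big(v_2(x-he_1),\dots,v_2(x-he_n),h^nf(x)\big)\le v_2(x),
\]
which closes the induction.

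The genuine content here is all in Lemma \ref{lem:scheme} — the description of $S$ as the largest root of $F$ and, crucially, its monotonicity in every argument — so once that is in hand the comparison principle is essentially bookkeeping along the upwind ordering, and I do not anticipate a real obstacle. The one point worth a careful sentence is that Lemma \ref{lem:scheme}(iv) is stated on $[0,\infty)^{n+1}$, so one must check that every tuple fed into $S$ during the induction has nonnegative entries: nonnegativity of $v_2$ propagates upward from $\Gamma_h$ since $S$ takes nonnegative values, and the neighbour values of $v_1$ may if necessary be replaced by their positive parts without weakening the subsolution bound, because raising an argument $a_i$ to $\max(a_i,0)$ only lowers $F(\,\cdot\,,t)$ on $t\ge 0$ and hence cannot lower its largest root.
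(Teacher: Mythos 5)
Your proof is correct and follows essentially the same route as the paper: induction along the upwind ordering, recasting the subsolution inequality as $v_1(x)\le S(\cdots)$ via Lemma \ref{lem:scheme} (ii), and then chaining this with the monotonicity of $S$ from Lemma \ref{lem:scheme} (iv) and the hypothesis \eqref{eq:Ssuper}. The only difference is cosmetic: you make the well-founded ordering explicit and add a careful remark about nonnegativity of the arguments fed to $S$, a point the paper's proof leaves implicit.
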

\begin{proof}
We will prove the result by induction. We have $v_1(x) \leq v_2(x)$ for $x \in \Gamma_h$ by definition. Now let $x \in (0,1]^n_h$ and suppose that
\begin{equation}\label{eq:induct}
v_1(x-he_i) \leq v_2(x-he_i) \ \ \ \text{ for } i=1,\dots,n.
\end{equation}

Since $v_1$ is a subsolution of \eqref{S2} we have
\[F(v_1(x-he_1),\dots,v_1(x-he_n),h^nf(x),v_1(x))\leq 0.\]
It follows from Lemma \ref{lem:scheme} (ii) that 
\begin{equation}\label{eq:subsol}
v_1(x) \leq S(v_1(x-he_1),\dots,v_1(x-he_n),h^nf(x)).
\end{equation}
Recalling \eqref{eq:Ssuper} and Lemma \ref{lem:scheme} (iv)  we have
\begin{align*}
v_1(x) &\leq S(v_1(x-he_1),\dots,v_1(x-he_n), h^nf(x)) \\
&\leq S(v_2(x-he_1),\dots,v_2(x-he_n), h^nf(x)) \leq v_2(x).
\end{align*}
The proof is completed by induction.
\end{proof}
\begin{remark}\label{rem:poscomp}
If $f$ is positive and $v$ is a supersolution of \eqref{S2} that is positive on $(0,1]^n_h$, then it follows from Lemma \ref{lem:scheme} (iii) that 
\begin{equation}
v(x) \geq S(v(x-he_1),\dots,v(x-he_n),h^nf(x)) \ \ \text{ for all } x \in (0,1]^n_h.
\end{equation}
\end{remark}
\begin{remark}\label{rem:S2exact}
Notice that 
\[\bar{v}(x) = (x_1\cdots x_n) \sup_{[0,1]^n}f \ \ \text{ and } \ \  \underline{v}(x) =(x_1\cdots x_n) \inf_{[0,1]^n}f\] 
are super- and subsolutions of \eqref{S2}, respectively. By Theorem \ref{thm:Svcomp} and Remark \ref{rem:poscomp}
\begin{equation}\label{S2bound}
(x_1\cdots x_n) \inf_{[0,1]^n} f \leq v_h(x) \leq (x_1\cdots x_n) \sup_{[0,1]^n}f \ \ \text{ for all } x\in [0,1]_h^n.
\end{equation}
As a consequence, if $f\equiv C\geq 0$ then $v_h(x) = Cx_1\cdots x_n$, which is \emph{exactly} equal to the maximal viscosity solution of \eqref{P2}.
\end{remark}
We can now establish convergence of the scheme \eqref{S2}.
\begin{theorem}\label{thm:convPv}
Let $f \in C([0,1]^n)$ be nonnegative, and for each $h>0$ let $v_h$ denote the maximal solution of \eqref{S2}. Then $v_h \to v$ uniformly on $[0,1]^n$ as $h \to 0$, where $v$ is the maximal viscosity solution of \eqref{P2}.
\end{theorem}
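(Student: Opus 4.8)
The plan is to run the Barles--Souganidis machinery through Theorem~\ref{thm:bs-conv}, but because \eqref{P2} admits no comparison principle we bound the upper and lower half-relaxed limits of $v_h$ by two asymmetric arguments. Stability is immediate from Remark~\ref{rem:S2exact}: by \eqref{S2bound}, $0\le v_h(x)\le (x_1\cdots x_n)\sup_{[0,1]^n}f\le \sup_{[0,1]^n}f$ for every $h>0$ and $x\in[0,1]^n_h$. Set
\[\bar v(x)=\limsup_{\substack{h\to 0\\ y\to x}}v_h(y),\qquad \underline v(x)=\liminf_{\substack{h\to 0\\ y\to x}}v_h(y),\qquad x\in[0,1]^n.\]
Since \eqref{S2} has the form $H(x,v_h(x),D^-v_h(x))=0$ with the continuous, directed Hamiltonian $H(x,z,p)=\prod_{i=1}^n(p_i)_+-z^{n-1}f(x)$, Theorem~\ref{thm:bs-conv} gives that $\bar v$ is a viscosity subsolution and $\underline v$ a viscosity supersolution of \eqref{P2} in $(0,1]^n$; both are nonnegative, and since $0\le v_h(y)\le(y_1\cdots y_n)\sup f\to 0$ as $y\to x\in\Gamma$, we get $\bar v=\underline v=0$ on $\Gamma$.

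The upper bound is then immediate: $\bar v\in\usc([0,1]^n)$ is a nonnegative viscosity subsolution of \eqref{P2}, so Lemma~\ref{lem:vmaximal} gives $\bar v\le u^n/n^n=v$ on $[0,1]^n$, where $u$ is the nondecreasing viscosity solution of \eqref{P1} and $v$ the maximal viscosity solution of \eqref{P2}.

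The crux is the matching lower bound $\underline v\ge v$, for which I would compare $v_h$ against a discrete subsolution of \eqref{S2} built from the already-understood scheme \eqref{S1}. Let $u_h$ be the solution of \eqref{S1}; it is nondecreasing (by the analogue of Lemma~\ref{lem:scheme}(i)), so $D^-_iu_h\ge 0$, and writing $a=u_h(x)$, $b_i=u_h(x-he_i)$ we have $\prod_{i=1}^n(a-b_i)=h^n\prod_{i=1}^nD^-_iu_h(x)=h^nf(x)$ on $(0,1]^n_h$. Put $\tilde w_h=u_h^n/n^n$. Using $0\le a^n-b_i^n=(a-b_i)\sum_{k=0}^{n-1}a^{n-1-k}b_i^k\le na^{n-1}(a-b_i)$ for $0\le b_i\le a$,
\[\prod_{i=1}^nD^-_i\tilde w_h(x)=\frac{\prod_{i=1}^n(a^n-b_i^n)}{n^{n^2}h^n}\le\frac{n^na^{n(n-1)}\prod_{i=1}^n(a-b_i)}{n^{n^2}h^n}=\frac{a^{n(n-1)}}{n^{n(n-1)}}\,f(x)=\tilde w_h(x)^{n-1}f(x),\]
so $\tilde w_h$ is a subsolution of \eqref{S2} (the degenerate case $u_h(x)=0$ being trivial). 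Since $\tilde w_h=v_h=0$ on $\Gamma_h$ and $v_h$ is the maximal solution of \eqref{S2} (hence satisfies \eqref{eq:Ssuper} with equality), Theorem~\ref{thm:Svcomp} gives $\tilde w_h\le v_h$ on $[0,1]^n_h$. By Lemma~\ref{lem:equivalence}, $u_h\to u$ uniformly on $[0,1]^n$, so $\tilde w_h=u_h^n/n^n\to u^n/n^n=v$ uniformly, and therefore $\underline v(x)\ge\liminf_{h\to 0,\,y\to x}\tilde w_h(y)=v(x)$ for all $x$, using that $v$ is continuous (a power of the Hölder-continuous $u$).

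Combining the two bounds gives $v\le\underline v\le\bar v\le v$, hence $\underline v=\bar v=v$ on $[0,1]^n$; since $v$ is continuous and agrees with both half-relaxed limits, the usual compactness argument (extract $h_k\to 0$ and $x_k\to x^*$ with $|v_{h_k}(x_k)-v(x_k)|\ge\delta$ and contradict $\bar v(x^*)=\underline v(x^*)=v(x^*)$) yields $v_h\to v$ uniformly on $[0,1]^n$. I expect the lower bound to be the main obstacle: with a zeroth-order term of the wrong sign in \eqref{P2} there is no comparison principle (indeed $v\equiv 0$ is a supersolution, so a naive ``supersolution dominates the minimal solution'' step fails), and the argument hinges on the non-obvious fact that the $n$th power of the \eqref{S1}-solution, suitably scaled, is a discrete subsolution of \eqref{S2} and is therefore dominated by the maximal solution $v_h$.
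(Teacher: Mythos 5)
Your proposal is correct and follows essentially the same route as the paper: half-relaxed limits plus Theorem \ref{thm:bs-conv} and Lemma \ref{lem:vmaximal} for the upper bound, and the discrete subsolution $u_h^n/n^n$ built from \eqref{S1} together with the comparison principle of Theorem \ref{thm:Svcomp} for the lower bound. Your algebraic inequality $a^n-b_i^n\le na^{n-1}(a-b_i)$ is exactly the convexity estimate the paper uses.
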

\begin{proof}
Let 
\[\bar{v}(x) = \limsup_{\substack{h\to 0 \\ y \to x}} v_h(y) \ \ \text{ and } \ \ \underline{v}(x) = \liminf_{\substack{h\to 0 \\ y \to x}} v_h(y).\]
By Theorem \ref{thm:bs-conv} and Remark \ref{rem:S2exact}, $\bar{v} \in \usc([0,1]^n)$ is a viscosity subsolution of \eqref{P2}. By Lemma \ref{lem:vmaximal}, $\bar{v} \leq v$.

Let $u_h$ be the solution of \eqref{S1}, and let $\psi_h(x) = u_h(x)^n/n^n$. Since $t \mapsto t^n$ is convex for $t>0$ and $u_h$ is nondecreasing, we have
\[D^-_i\psi_h(x) = \frac{u_h(x)^n - u_h(x-he_i)^n}{n^nh}\leq \frac{u_h(x)^{n-1}}{n^{n-1}} D^-_iu_h(x).\]
Therefore
\[D^-_1\psi_h(x) \cdots D^-_n\psi_h(x) \leq \psi_h(x)^{n-1} f(x) \ \ \text{for all } x \in (0,1]^n_h.\]
By Theorem \ref{thm:Svcomp}, $\psi_h \leq v_h$ on $[0,1]^n_h$. As in the proof of Lemma \ref{lem:equivalence}, we have that $\psi_h \to v$ uniformly on $[0,1]^n$ as $h\to 0$. It follows that $\underline{v} \geq v$, which completes the proof.
\end{proof}
\begin{remark}\label{rem:onesided}
Notice in the proof of Theorem \ref{thm:convPv} we showed that $u_h^n \leq n^n v_h$, where $u_h$ is the numerical solution of \eqref{S1}, and $v_h$ is the solution of \eqref{S2}. Under the assumptions of Theorem \ref{thm:ratev}, this gives a one-sided convergence rate for \eqref{S1} of the form
\begin{equation}\label{eq:onesideed}
u_h^n - u^n \leq C\sqrt{h},
\end{equation}
where $u$ is the nondecreasing viscosity solution of \eqref{P1}. When $u$ is concave, we can actually prove that $u_h \leq u$ (see \cite[Lemma 3.5]{calder2015pde}).
\end{remark}

\subsection{The scheme (S3)}
\label{sec:second}

Recall that we defined the solution $w_h:[0,1]^n_h\to \R$ of \eqref{S3} inductively by taking $w_h(x)$ to be the largest solution of the polynomial equation defining \eqref{S3} at each $x$. It is easy to see that 
\begin{equation}\label{eq:key}
w_h(x) + nx_iD^-_iw_h(x) \geq 0 \ \ \text{ for all } x \in [0,1]^n_h \  \text{ and } \ i \in \{1,\dots,n\}.
\end{equation}
The following lemma shows that \eqref{S3} admits a comparison principle whenever the supersolution satisfies \eqref{eq:key}.
\begin{lemma}\label{lem:S3comp}
Suppose $f$ is  nonnegative. Let $w_1$ and $w_2$ be sub- and supersolutions of \eqref{S3}, respectively, and suppose $w_2$ satisfies \eqref{eq:key}. Then $w_1 \leq w_2$ on $[0,1]^n_h$.
\end{lemma}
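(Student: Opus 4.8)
The plan is to prove the lemma by induction on the grid points of $[0,1]^n_h$ ordered by the integer $k(x):=h^{-1}(x_1+\cdots+x_n)\in\No$, exploiting the fact that the stencil of \eqref{S3} at $x$ involves $w_h$ only at $x$ and at those neighbors $x-he_i$ with $x_i\ge h$: when $x_i=0$ the term $nx_iD^-_iw_h(x)$ vanishes and the neighbor is irrelevant. Each relevant neighbor has $k(x-he_i)=k(x)-1$, so the ordering is well-founded.

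First I would package one step of the scheme as an update operator, the analogue of the function $S$ from Lemma~\ref{lem:scheme}. Fix $x\in[0,1]^n_h$, write $\beta_i=nx_i/h\ge 0$, and for $a=(a_1,\dots,a_n)\in\R^n$ set $g_{x,a}(t)=\prod_{i=1}^n\bigl((1+\beta_i)t-\beta_i a_i\bigr)_+$, so that \eqref{S3} at $x$ is the equation $g_{x,a}(w_h(x))=f(x)$ with $a_i=w_h(x-he_i)$. Since $1+\beta_i\ge 1$, one has $g_{x,a}(t)\to\infty$ as $t\to\infty$ while $g_{x,a}(t)=0$ for $t$ sufficiently negative; by continuity the solution set of $g_{x,a}(t)=f(x)$ is nonempty and bounded above whenever $f(x)\ge0$, so $\Phi_x(a):=\sup\{t:g_{x,a}(t)=f(x)\}$ is well-defined and finite. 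I would then establish three properties. \textbf{(i) Monotonicity:} $\Phi_x$ is nondecreasing in each $a_i$ — increasing $a_i$ lowers $g_{x,a}(\cdot)$ pointwise, and combined with $g_{x,a}(t)\to\infty$ and the intermediate value theorem this pushes the largest root to the right. \textbf{(ii) Subsolutions:} if $w_1$ is a subsolution of \eqref{S3} then $w_1(x)\le\Phi_x(w_1(x-he_1),\dots,w_1(x-he_n))$, because for any $t>\Phi_x(a)$ one has $g_{x,a}(t)>f(x)$ (otherwise $g_{x,a}\to\infty$ plus continuity would produce a root beyond $\Phi_x(a)$), so $g_{x,a}(w_1(x))\le f(x)$ forces $w_1(x)\le\Phi_x(a)$. \textbf{(iii) Supersolutions with \eqref{eq:key}:} if $w_2$ is a supersolution of \eqref{S3} satisfying \eqref{eq:key} then $w_2(x)\ge\Phi_x(w_2(x-he_1),\dots,w_2(x-he_n))$.

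Property (iii) is where condition \eqref{eq:key} is indispensable, and I expect it to be the main obstacle. With $a'_i=w_2(x-he_i)$, let $t^\ast=\max_i\beta_ia'_i/(1+\beta_i)$ be the smallest $t$ at which every linear factor $(1+\beta_i)t-\beta_ia'_i$ is nonnegative. On $(t^\ast,\infty)$ all factors are positive, so $g_{x,a'}(t)=\prod_i\bigl((1+\beta_i)t-\beta_ia'_i\bigr)$ there, which is a product of positive increasing affine functions and hence \emph{strictly increasing}, while $g_{x,a'}\equiv0$ on $(-\infty,t^\ast]$; consequently $\Phi_x(a')\ge t^\ast$, with equality precisely when $f(x)=0$. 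Now \eqref{eq:key} says exactly that every factor is nonnegative at $t=w_2(x)$, i.e. $w_2(x)\ge t^\ast$, and it converts the supersolution inequality into $g_{x,a'}(w_2(x))\ge f(x)$ with $g_{x,a'}$ nondecreasing on $[t^\ast,\infty)$; comparing against $g_{x,a'}(\Phi_x(a'))=f(x)$ and using strict monotonicity on $(t^\ast,\infty)$ yields $w_2(x)\ge\Phi_x(a')$. Without \eqref{eq:key}, $w_2(x)$ could lie below $t^\ast$ in the regime where a factor is negative, where the supersolution inequality is vacuous, and the conclusion would fail — this is the exact analogue of needing Remark~\ref{rem:poscomp} (via \eqref{eq:Ssuper}) in the proof of Theorem~\ref{thm:Svcomp}.

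Finally I would run the induction. For $k(x)=0$, i.e. $x=0$, all $\beta_i=0$ and \eqref{S3} reads $(w_h(0)_+)^n=f(0)$; \eqref{eq:key} forces $w_2(0)\ge0$, so $w_1(0)\le f(0)^{1/n}\le w_2(0)$ (this is also just (ii) and (iii) with an empty neighbor set). For the inductive step at a general $x$, assuming $w_1\le w_2$ at every neighbor $x-he_i$ with $x_i\ge h$ — hence, by (i), at every argument that affects $\Phi_x$ — properties (ii), (i), (iii) chain to give
\[
w_1(x)\le\Phi_x\bigl(w_1(x-he_1),\dots,w_1(x-he_n)\bigr)\le\Phi_x\bigl(w_2(x-he_1),\dots,w_2(x-he_n)\bigr)\le w_2(x),
\]
which closes the induction and proves $w_1\le w_2$ on $[0,1]^n_h$.
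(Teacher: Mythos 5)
Your argument is correct, but it is not the route the paper takes for this lemma. The paper's proof of Lemma \ref{lem:S3comp} is a two-line maximum-principle contradiction: let $x$ be a maximizer of $w_1-w_2$ over the finite grid, suppose $w_1(x)>w_2(x)$, note that then $D^-_iw_1(x)\geq D^-_iw_2(x)$ for every $i$ with $x_i\neq 0$, and conclude from \eqref{eq:key} that every factor $w_1(x)+nx_iD^-_iw_1(x)$ strictly exceeds the corresponding nonnegative factor for $w_2$, giving $f(x)\leq \prod_i(w_2+nx_iD^-_iw_2)<\prod_i(w_1+nx_iD^-_iw_1)\leq f(x)$. What you did instead is transplant the paper's proof of Theorem \ref{thm:Svcomp} (the comparison principle for \eqref{S2}) to \eqref{S3}: you build the explicit single-site update operator $\Phi_x$ (the analogue of $S$ from Lemma \ref{lem:scheme}), prove it is nondecreasing in the neighbor values, sandwich sub- and supersolutions around it, and induct along the level sets of $x_1+\cdots+x_n$. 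Both are sound; your analysis of where \eqref{eq:key} enters is accurate and in fact sharper than the paper's (you correctly isolate that it is only needed at points where $f(x)=0$, where the supersolution inequality is vacuous below the threshold $t^\ast$). The trade-off is length versus byproducts: the paper's argument is shorter and needs no structure theory for the update map, while yours additionally establishes that \eqref{S3} is a monotone single-pass scheme with a well-defined largest-root update --- facts the paper asserts separately when defining $w_h$ and discussing the bisection search --- and would generalize more readily to situations where one wants quantitative control on the update operator.
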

\begin{proof}
Let $x \in [0,1]^n_h$ be a maximum of $w_1 - w_2$, and suppose to the contrary that $w_1(x) > w_2(x)$. Then we have
\[D^-_iw_1(x) \geq D^-_iw_2(x) \ \ \text{for all } i \text{ such that } x_i\neq 0.\]
Recalling \eqref{eq:key} we have
\[f(x) \leq \prod_{i=1}^n (w_2(x) + nx_iD^-_iw_2(x)) <\prod_{i=1}^n (w_1(x) + nx_iD^-_iw_1(x)) \leq f(x),\]
which is a contradiction.
\end{proof}
We now prove convergence of \eqref{S3}.
\begin{theorem}\label{thm:convS3}
Let $f \in C([0,1]^n)$ be nonnegative, and for $h>0$ let $w_h$ be the maximal solution of \eqref{S3}.  Then $w_h \to w$ uniformly on $(0,1]^n$ as $h \to 0$, where $w$ is the maximal bounded viscosity solution of \eqref{P3}.
\end{theorem}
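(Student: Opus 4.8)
The plan is to rerun the proof of Theorem~\ref{thm:convPv}, replacing the one-sided bound of Lemma~\ref{lem:vmaximal} by the genuine comparison principle Theorem~\ref{thm:comparison-w}; the only genuinely new point is verifying the sign condition \eqref{eq:wcond} for the relevant half-relaxed limit. First I would record uniform bounds on $\{w_h\}$. The constant $\bar w\equiv\sup_{[0,1]^n}f^{1/n}$ is a supersolution of \eqref{S3} in the sense of Lemma~\ref{lem:S3comp} (since $\prod_{i=1}^n(\bar w+nx_iD^-_i\bar w)_+=(\sup_{[0,1]^n}f^{1/n})^n\ge f(x)$ and \eqref{eq:key} holds trivially because $\bar w+nx_i\cdot0=\bar w\ge0$), and $\underline w\equiv\inf_{[0,1]^n}f^{1/n}\ge0$ is a subsolution. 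Since $w_h$ is itself a solution of \eqref{S3} satisfying \eqref{eq:key}, two applications of Lemma~\ref{lem:S3comp} give $\inf_{[0,1]^n}f^{1/n}\le w_h(x)\le\sup_{[0,1]^n}f^{1/n}$ on $[0,1]^n_h$, uniformly in $h$. In particular $\{w_h\}$ is uniformly bounded, so the half-relaxed limits $\bar w(x)=\limsup_{h\to0,\,y\to x}w_h(y)$ and $\underline w(x)=\liminf_{h\to0,\,y\to x}w_h(y)$ are finite and bounded, with $\bar w\in\usc((0,1]^n)$, $\underline w\in\lsc((0,1]^n)$, and $\underline w\le\bar w$.

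Next, \eqref{S3} has the form \eqref{eq:S} with $H(x,z,p)=\prod_{i=1}^n(z+nx_ip_i)_+-f(x)$, which is continuous and directed (each factor is nondecreasing in $p$ because $x_i\ge0$, and a product of nonnegative nondecreasing functions is nondecreasing), so Theorem~\ref{thm:bs-conv} shows that $\bar w$ is a bounded viscosity subsolution and $\underline w$ a bounded viscosity supersolution of \eqref{P3} on $(0,1]^n$. To invoke Theorem~\ref{thm:comparison-w} I still need $\underline w$ to satisfy \eqref{eq:wcond}, and this I would obtain from \eqref{eq:key} by the same monotone-scheme argument that underlies Theorem~\ref{thm:bs-conv}: for each $i$ the operator $(x,z,p)\mapsto z+nx_ip_i$ is directed, the discrete inequality $w_h(x)+nx_iD^-_iw_h(x)\ge0$ is a consistent backward-difference discretisation of $w+nx_iw_{x_i}\ge0$, and $\{w_h\}$ is uniformly bounded; testing $\underline w$ against $\phi\in C^1$ at an interior local minimum of $\underline w-\phi$, using $D^-_iw_h\le D^-_i\phi$ at the nearby discrete minima together with $x_i\ge0$, passes the inequality to the limit and yields $\underline w+nx_i(\underline w)_{x_i}\ge0$ in the viscosity sense on $(0,1]^n$.

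Now Theorem~\ref{thm:comparison-w}, applied with $w_1=\bar w$ and $w_2=\underline w$, gives $\bar w\le\underline w$ on $(0,1]^n$, hence $\bar w=\underline w$ there; this common function is continuous on $(0,1]^n$, is a bounded viscosity solution of \eqref{P3} satisfying \eqref{eq:wcond}, and therefore — applying Theorem~\ref{thm:comparison-w} once more in each direction — equals the maximal bounded viscosity solution $w$. Coincidence of the two half-relaxed limits with the continuous function $w$ then yields locally uniform convergence $w_h\to w$ on $(0,1]^n$ by the standard argument (see \cite{barles1991}). The step I expect to be the main obstacle is promoting this to uniform convergence on all of $(0,1]^n$: the only modulus of continuity for $w_h$ that \eqref{eq:key} immediately provides, namely $|w_h(x)-w_h(x+he_i)|\le Ch/(h+nx_i)$, degenerates as $x\to\Gamma$, so the behaviour of $w_h$ near $\Gamma$ must be controlled separately — e.g.\ by comparing $w_h$ with suitable (truncated, hence bounded) barriers of the type $\prod_i(1+Cx_i^{-1})^{1/n}$ in a neighbourhood of $\Gamma$ to produce an $h$-independent modulus of continuity there.
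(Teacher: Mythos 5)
Your proposal follows essentially the same route as the paper: uniform bounds via the discrete comparison principle (Lemma \ref{lem:S3comp}), half-relaxed limits and Theorem \ref{thm:bs-conv} to obtain bounded sub- and supersolutions of \eqref{P3}, passing \eqref{eq:key} to the limit to verify \eqref{eq:wcond} for $\underline{w}$, and then Theorem \ref{thm:comparison-w} to conclude $\bar{w}=\underline{w}=w$. The one point you flag as an obstacle --- uniformity of the convergence up to $\Gamma$, where the modulus coming from \eqref{eq:key} degenerates --- is not addressed in the paper's proof either, which stops at the identification of the half-relaxed limits; your instinct that a barrier near $\Gamma$ (or, when $f$ is Lipschitz, the uniform estimate of Theorem \ref{thm:lip}) is what upgrades locally uniform to uniform convergence is sound and, if anything, more careful than the printed argument.
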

\begin{proof}
Let
\[\bar{w}(x) = \limsup_{\substack{h\to 0 \\ y \to x}} w_h(y) \ \ \text{ and } \ \ \underline{w}(x) = \liminf_{\substack{h\to 0 \\ y \to x}} w_h(y).\]
We can use the comparison principle from Lemma \ref{lem:S3comp} to show that
\[\inf_{[0,1]^n}f^\frac{1}{n} \leq w_h  \leq \sup_{[0,1]^n}f^\frac{1}{n}.\]
Therefore
\[\inf_{[0,1]^n}f^\frac{1}{n} \leq \underline{w} \leq \bar{w} \leq \sup_{[0,1]^n}f^\frac{1}{n}.\]
By Theorem \ref{thm:bs-conv}, $\bar{w}\in \usc((0,1]^n)$ is a bounded viscosity subsolution of \eqref{P3} and $\underline{w}\in \lsc((0,1]^n)$ is a bounded viscosity supersolution of \eqref{P3}. Furthermore, since $w_h$ satisfies \eqref{eq:key} another application of Theorem \ref{thm:bs-conv} shows that  $\underline{w}$ is a viscosity solution of \eqref{eq:wcond} for all $i$. By Theorem \ref{thm:comparison-w}, $\bar{w}=\underline{w}=w$, where $w$ is the maximal bounded viscosity solution of \eqref{P3}. 
\end{proof}

\section{Rates of convergence for positive and Lipschitz $f$}
\label{sec:rates}

In order to obtain a rate of convergence, it is necessary to show that the either the numerical solution or the viscosity solution of the continuum equation is Lipschitz continuous up to the boundary $\Gamma$. Since none of the Hamiltonians considered in this paper are coercive, the standard textbook estimates~\cite{bardi1997} do not apply.

\subsection{A Lipschitz estimate in dimension $n=2$}
\label{sec:lipv}

In dimension $n=2$, we can prove the required Lipschitz estimate for $v$ directly by differentiating \eqref{P2} and using a comparison principle. Let us briefly sketch the argument, which is made rigorous at the level of the numerical scheme \eqref{S2} in Theorem \ref{thm:vlipd2}. 

Formally differentiating \eqref{P2} in the variable $x_1$ we have
\[v_{x_1 x_1}v_{x_2} + v_{x_1}v_{x_1 x_2} = vf_{x_1} + fv_{x_1}.\]
Setting $\psi = v_{x_1}$ and noting that $v(x) \leq \sup_{[0,1]^2} f$ we have
\begin{equation}\label{eq:diffpde}
\psi_{x_1}v_{x_2} + (\psi_{x_2} - f)\psi \leq [f]_{1;[0,1]^2}\sup_{[0,1]^2} f.
\end{equation}
Consider using a comparison function of the form $\bar{\psi}(x) = C(1+x_2)$. Since $\bar{\psi}_{x_1}\equiv 0$, the first term in \eqref{eq:diffpde} can be ignored. Furthermore, for $C>\sup_{[0,1]^2}f$, the sign of the zeroth order term in \eqref{eq:diffpde} is positive, which suggests that a comparison principle should hold, allowing us to show that $v_{x_1}=\psi \leq C(1+x_2)$ for a possibly larger constant $C>0$. 

Of course, $v$ is not in general smooth enough to use this argument directly, so we need to regularize \eqref{P2}. One obvious approach would be the method of vanishing viscosity. However, there are some technical challenges with this approach, due to the sign of the zeroth order term in \eqref{P2} and the (non-obvious) fact that for positive viscosity, $v^\eps$ can fail to be nondecreasing. Our approach is to apply the argument above at the level of the numerical scheme \eqref{S2}. Since we showed convergence of \eqref{S2} in Theorem \ref{thm:convPv}, a Lipschitz estimate on $v_h$ directly carries over to the maximal viscosity solution $v$ of \eqref{P2}. 

Heading in this direction, we now recall some basic properties of finite differences.
\begin{proposition}\label{prop:finite-diff-properties}
For all $k,j \in \{1,\dots,n\}$ we have
\begin{enumerate}[label=\emph{(\roman*)}]
\item $\displaystyle D^\pm_k D^\pm_j u = D^\pm_j D^\pm_k u$ and $\displaystyle D^+_k D^-_j u = D^-_j D^+_k u$,
\item $D_k^\pm(uv)(x) = u(x) D_k^\pm v(x) + v(x\pm he_k)D_k^\pm u(x)$,
\item $D_k^\pm(uv) = u D_k^\pm v + vD_k^\pm u \pm hD_k^\pm u D_k^\pm v$,
\end{enumerate}
\end{proposition}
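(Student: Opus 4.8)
The plan is to verify each identity by direct expansion from the definition $D^\pm_i u(x) = \pm h^{-1}\big(u(x\pm he_i) - u(x)\big)$; none of the PDE machinery developed above is needed, so the proposition reduces to elementary bookkeeping with shifted evaluation points.

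For part (i), I would compute, in the forward case,
\[D^+_k D^+_j u(x) = \frac{1}{h^2}\big(u(x+he_j+he_k) - u(x+he_k) - u(x+he_j) + u(x)\big),\]
and note that the right-hand side is symmetric in $j$ and $k$; replacing each $+he_i$ by $-he_i$ gives the claim for $D^-_k D^-_j$. For the mixed case,
\[D^+_k D^-_j u(x) = \frac{1}{h^2}\big(u(x+he_k) - u(x+he_k-he_j) - u(x) + u(x-he_j)\big),\]
which is manifestly equal to $D^-_j D^+_k u(x)$ by the same computation. I would present one case in full and observe that the others are obtained by the identical rearrangement.

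For parts (ii) and (iii), the single algebraic step is to insert a cross term. Taking the $+$ sign and writing $h\,D^+_k(uv)(x) = u(x+he_k)v(x+he_k) - u(x)v(x)$, I add and subtract $u(x)v(x+he_k)$ to obtain
\[u(x+he_k)v(x+he_k) - u(x)v(x) = v(x+he_k)\big(u(x+he_k)-u(x)\big) + u(x)\big(v(x+he_k)-v(x)\big),\]
and dividing by $h$ yields (ii). Substituting $v(x+he_k) = v(x) + h\,D^+_k v(x)$ into the first term on the right then gives (iii). The backward case is identical after replacing $he_k$ by $-he_k$ and tracking the overall sign, which is precisely the source of the $\pm$ in the stated identities; in particular the shifted factor $v(x\pm he_k)$ in (ii) carries the same direction as the difference quotient $D^\pm_k$ being used.

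There is no genuine obstacle here: the only care required is keeping the evaluation points $x$, $x\pm he_k$, $x\pm he_j$ straight and matching the $\pm$ in (ii)--(iii) with the forward/backward direction of the difference. I would therefore keep the write-up short, spelling out the $+$ cases and leaving the sign changes to the reader.
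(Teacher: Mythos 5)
Your verification is correct, and all the sign bookkeeping checks out: the mixed-difference commutation, the cross-term insertion for (ii), and the substitution $v(x\pm he_k)=v(x)\pm h D^\pm_k v(x)$ to pass from (ii) to (iii) all match the definition $D^\pm_i u(x)=\pm h^{-1}(u(x\pm he_i)-u(x))$. The paper states this proposition without proof, treating it as routine, so your direct expansion is exactly the intended (and only) argument; note that (ii) is also precisely the base case used in the paper's appendix proof of Lemma \ref{lem:prod-rule}, so your write-up is consistent with how the result is used downstream.
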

 We now give a preliminary regularity result for $v_h$.
\begin{theorem}\label{thm:vlipd2}
Let $n=2$, let $f \in C^{0,1}([0,1]^2)$ be nonnegative, and let $v_h$ be any nondecreasing solution of \eqref{S2}.   Then for $k=1,2$ we have
\begin{equation}\label{eq:vlipd2}
0 \leq D^-_kv_h(x) \leq [f]_{1;[0,1]^2} + 3\sup_{[0,1]^2}f \ \ \text{ for all } x \in (0,1]^2_h.
\end{equation}
\end{theorem}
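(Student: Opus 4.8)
The plan is to follow the formal argument sketched before the theorem, but carried out rigorously at the level of the scheme \eqref{S2}, using the discrete product rule from Proposition \ref{prop:finite-diff-properties} and the comparison principle from Theorem \ref{thm:Svcomp}. Write $L := [f]_{1;[0,1]^2}$, $M := \sup_{[0,1]^2} f$, and fix the nondecreasing solution $v_h$. The lower bound $D^-_k v_h \geq 0$ is immediate from the hypothesis that $v_h$ is nondecreasing, so all the work is in the upper bound.

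First I would set $\psi_h := D^-_1 v_h$ (the case $k=2$ is symmetric) and apply $D^-_1$ to the scheme equation $D^-_1 v_h(x)\, D^-_2 v_h(x) = v_h(x) f(x)$, which holds at every $x \in (0,1]^2_h$ with both factors positive when $f>0$ (for $f\geq 0$ one argues by approximation, replacing $f$ by $f+\eps$ and letting $\eps\to 0$, or directly tracking the degenerate cases). Using part (iii) of Proposition \ref{prop:finite-diff-properties} to expand $D^-_1$ of each product, the left side becomes $D^-_1\psi_h \cdot D^-_2 v_h(\cdot - he_1) + \psi_h \cdot D^-_1 D^-_2 v_h + h(\ldots)$, and since $D^-_1 D^-_2 v_h = D^-_2 D^-_1 v_h = D^-_2 \psi_h$ (part (i)), while the right side expands as $v_h\, D^-_1 f + f(\cdot-he_1)\, D^-_1 v_h$ or the symmetric form. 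Rearranging, and bounding $D^-_1 f$ by $L$, $v_h$ by $M x_1 x_2 \leq M$ via \eqref{S2bound}, $D^-_2 v_h \geq 0$, one obtains a discrete differential inequality of the schematic form
\[
D^-_1\psi_h(x)\, \big(D^-_2 v_h(x-he_1) + (\text{nonneg. }h\text{-correction})\big) + \psi_h(x)\big(D^-_2\psi_h(x) - f(x-he_1)\big) \leq LM
\]
(with the $h$-correction and the exact shifts of $f$ determined by which form of part (iii)/(ii) is used). This is the discrete analogue of \eqref{eq:diffpde}.

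Next I would run a discrete comparison argument against the barrier $\bar\psi(x) = C(1+x_2)$ with $C$ chosen larger than $M$ (and, as the theorem statement indicates, of size $L + 3M$ after the constants are tracked through). The point is that $D^-_1 \bar\psi \equiv 0$, so the first term drops out, and $D^-_2 \bar\psi = C > M \geq f(x-he_1)$, so at an interior maximum of $\psi_h - \bar\psi$ — where $D^-_1\psi_h \geq D^-_1\bar\psi = 0$ and $D^-_2\psi_h \leq D^-_2\bar\psi$ — the inequality above would force $\psi_h(x)(C - M) \lesssim LM$ up to the sign subtleties, pinning $\psi_h(x)$ below roughly $L + 2M$ or so; combined with the value of $\bar\psi$ this yields $\psi_h \leq L + 3M$ globally. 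One must also check the boundary/base cases of the induction: on $\Gamma_h$ one has $v_h = 0$, and along the first interior layer $x_1 = h$ the scheme gives $v_h(x) = S(0, v_h(x-he_2), h^2 f(x))$, so $D^-_1 v_h(x) = v_h(x)/h = S(\ldots)/h$, which by \eqref{S2bound} is $\leq x_2 M \leq M$ — so the barrier is comfortably satisfied there.

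The main obstacle will be handling the $h$-order correction terms and the shifted arguments of $f$ carefully enough that the zeroth-order coefficient $D^-_2\psi_h - f$ genuinely has the favorable sign at the extremal point, rather than merely formally: the expansion via part (iii) of Proposition \ref{prop:finite-diff-properties} produces a term $\pm h\, D^-_1 v_h\, D^-_1 D^-_2 v_h$ whose sign is not obvious and which must be absorbed, and one has to be careful that $D^-_2 v_h(x - he_1)$ rather than $D^-_2 v_h(x)$ appears as the coefficient of $D^-_1 \psi_h$ (this is exactly why part (ii), with its asymmetric shift, is stated). A clean way around the sign issue is to choose whichever of the forms (ii) or (iii) makes the correction term nonnegative at a maximum of $\psi_h - \bar\psi$, and to note that at such a point $D^-_1 \psi_h \geq 0$ so any nonnegative multiple of it only helps the inequality. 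Once the discrete inequality is in the form above with a strictly positive zeroth-order coefficient $C - M$, the maximum-principle step is routine, and the estimate transfers to the viscosity solution $v$ of \eqref{P2} by the uniform convergence in Theorem \ref{thm:convPv} (though for this preliminary theorem only the estimate on $v_h$ is claimed).
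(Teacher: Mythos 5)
Your proposal is correct and follows essentially the same route as the paper: apply $D^-_1$ to the scheme equation via the shifted product rule of Proposition \ref{prop:finite-diff-properties}(ii) (which, as you note, is exactly why that form is stated), bound the right-hand side by $[f]_{1;[0,1]^2}\sup f$ using \eqref{S2bound}, and compare $\psi_h=D^-_1v_h$ against the barrier $C(1+x_2)$ at a maximum of the difference, where $D^-_1\bar\psi=0$ and the zeroth-order coefficient $D^-_2\bar\psi-f$ is strictly positive. The only cosmetic differences are that the paper extends $v_h$ by zero to handle the boundary layer rather than checking it separately, drops the positive parts simply because $v_h$ is nondecreasing (no $f>0$ approximation needed), and picks $C=\sqrt{[f]_1\sup f}+\sup f$ before applying Cauchy's inequality to land on $[f]_1+3\sup f$.
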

\begin{proof}
Let us prove the result for $k=1$; the case of $k=2$ follows by symmetry. We extend $v_h$ to a function on $[-h,1]^2_h$ by setting $v_h(x) = 0$ for $x \in [-h,1]^2_h\setminus [0,1]^2_h$. With these definitions, we have $v_h(x) = D^-_iv_h(x)=0$ for $x \in \Gamma_h$. Since $v_h$ is nondecreasing, we therefore have
\[D^-_1v_h(x) D^-_2v_h(x) = v_h(x) f(x) \ \ \text{for all } x \in [0,1]^2_h.\]
We now apply the operator $D^-_1$ to the equation above and use Proposition \ref{prop:finite-diff-properties} (ii) to obtain
\[ D^-_1v_h(x) D^-_1D^-_2v_h(x) + D^-_1D^-_1v_h(x) D^-_2v_h(x-he_1) = v_h(x) D^-_1f(x) + f(x-he_i)D^-_1v_h(x) \]
for all $x \in (0,1]^2_h$. Setting $\psi(x) = D^-_1v_h(x)$ and using Proposition \ref{prop:finite-diff-properties} (i) we have
\[ \psi(x) D^-_2\psi(x) +  D^-_2v_h(x-he_1)D^-_1\psi(x) = v_h(x) D^-_1f(x) + f(x-he_i)\psi(x).\]
Applying Theorem \ref{thm:Svcomp} and Remark \ref{rem:S2exact}
\begin{equation}\label{eq:linearization}
D^-_2v_h(x-he_1)D^-_1\psi(x) + (D^-_2\psi(x)  - f(x-he_i)) \psi(x) \leq [f]_{1;[0,1]^2}\sup_{[0,1]^2}f=:M
\end{equation}
for all $x \in (0,1]^2_h$, and $\psi(x) = 0$ for $x \in\Gamma_h$. We can assume $M>0$, otherwise $v_h(x) = cx_1x_2$ and \eqref{eq:vlipd2} is trivial. 
  
Let $\bar{\psi}(x) = \left(\sqrt{M} + \sup_{[0,1]^2}f\right)(1 + x_2)$ and note that since $v_h$ is nondecreasing we have
\begin{equation}\label{eq:linearization-super}
D^-_2v_h(x-he_1)D^-_1\bar{\psi}(x) + (D^-_2\bar{\psi}(x)  - f(x-he_i)) \bar{\psi}(x) \geq M + \sqrt{M}\sup_{[0,1]^2}f.
\end{equation}
We claim that $\psi \leq \bar{\psi}$. Assume by way of contradiction that $\max_{[0,1]^2_h}(\psi - \bar{\psi}) > 0$. Since $\psi = 0 \leq \bar{\psi}$ on $\Gamma_h$, $\phi-\bar{\phi}$ must attain its positive maximum at some $x \in (0,1]^2_h$.
It follows that $D^-_i\psi(x) \geq D^-_i\bar{\psi}(x)$ for $i=1,2$ and $\psi(x) > \bar{\psi}(x)$. Since $D^-_2\bar{\psi}(x)  - f(x-he_i) > 0$ and $v_h$ is nondecreasing, we can combine \eqref{eq:linearization} and \eqref{eq:linearization-super} to find that 
\[M+ \sqrt{M}\sup_{[0,1]^2}f \leq M,\]
which is a contradiction to the positivity of $M$. By Cauchy's inequality
\[D^-_iv_h(x) = \psi(x)\leq \left(\sqrt{M} + \sup_{[0,1]^2}f\right)(1 + x_2) \leq [f]_{1;[0,1]^2} + 3\sup_{[0,1]^2}f,\]
for all $x \in (0,1]^2_h$.
\end{proof}
Since $v_h \to v$ uniformly (Theorem \ref{thm:convPv}), we can extend the Lipschitz estimate in Theorem \ref{thm:vlipd2} to \eqref{P2}.
\begin{corollary}\label{cor:vlipd2}
Let $n=2$, let $f \in C^{0,1}([0,1]^2)$ be nonnegative, and let $v$ be the maximal viscosity solution of \eqref{P2}. Then there exists $C>0$ such that 
\begin{equation}\label{eq:vlipd2cont}
[v]_{1;[0,1]^2} \leq C \|f\|_{C^{0,1}([0,1]^n)}.
\end{equation}
\end{corollary}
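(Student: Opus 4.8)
The plan is to transfer the uniform discrete Lipschitz bound contained in Theorem \ref{thm:vlipd2} to the limiting viscosity solution, using the uniform convergence of Theorem \ref{thm:convPv}. Write $L:=[f]_{1;[0,1]^2}+3\sup_{[0,1]^2}f$. First I would check that the maximal solution $v_h$ of \eqref{S2} is nondecreasing, so that Theorem \ref{thm:vlipd2} applies to it: by Lemma \ref{lem:scheme} (i), $S(a_1,\dots,a_n,b)\ge\max_i a_i$ whenever the $a_i$ are nonnegative, so the recursion \eqref{eq:maxSv} together with the nonnegativity of $v_h$ in \eqref{S2bound} gives $v_h(x)\ge v_h(x-he_k)$ for each $k$. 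Theorem \ref{thm:vlipd2} then gives $0\le D^-_k v_h\le L$ on $(0,1]^2_h$, and $v_h=0$ on $\Gamma_h$.

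The second step is to convert this pointwise difference-quotient bound into an honest discrete Lipschitz estimate. For $x,y\in[0,1]^2_h$ the lattice meet and join $x\wedge y:=(x_1\wedge y_1,\,x_2\wedge y_2)$ and $x\vee y:=(x_1\vee y_1,\,x_2\vee y_2)$ again lie in $[0,1]^2_h$, and since $v_h$ is nondecreasing,
\[
|v_h(x)-v_h(y)|\le v_h(x\vee y)-v_h(x\wedge y).
\]
Telescoping the right-hand side along a coordinate-monotone lattice path from $x\wedge y$ to $x\vee y$, each increment equals $h\,D^-_k v_h\in[0,Lh]$ at an interior grid point, or vanishes at a step landing in $\Gamma_h$ (where $v_h\equiv 0$); summing along the path gives
\[
v_h(x\vee y)-v_h(x\wedge y)\le L\,\|x\vee y-x\wedge y\|_1=L\,\|x-y\|_1\le\sqrt2\,L\,|x-y|.
\]
Hence $v_h$ is Lipschitz with constant at most $\sqrt2\,L$, uniformly in $h$.

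Finally I would pass to the limit: given $x,y\in[0,1]^2$, choose $x_h,y_h\in[0,1]^2_h$ with $x_h\to x$ and $y_h\to y$, and use Theorem \ref{thm:convPv} to get $v_h(x_h)\to v(x)$ and $v_h(y_h)\to v(y)$, whence $|v(x)-v(y)|\le\sqrt2\,L\,|x-y|$. Since $L\le 3\|f\|_{C^{0,1}([0,1]^2)}$, this yields \eqref{eq:vlipd2cont}. I do not expect any serious obstacle here---the analytic content is already in Theorem \ref{thm:vlipd2}---the only points deserving care being the verification that the maximal $v_h$ is nondecreasing, the treatment of grid points adjacent to $\Gamma_h$ in the telescoping, and the decision to run the limit through grid sequences rather than quoting continuity of $v$, which is not yet available at this stage.
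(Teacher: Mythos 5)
Your proposal is correct and follows exactly the route the paper intends: the paper states the corollary as an immediate consequence of the discrete bound in Theorem \ref{thm:vlipd2} together with the uniform convergence $v_h \to v$ from Theorem \ref{thm:convPv}, and your three steps (maximality of $v_h$ implies it is nondecreasing via Lemma \ref{lem:scheme}(i), telescoping the difference-quotient bound into a uniform discrete Lipschitz estimate, and passing to the limit along grid sequences) are precisely the details the paper leaves implicit. No gaps.
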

The proof of Theorem \ref{thm:vlipd2} does not work in dimensions $n\geq 3$ due to the additional nonlinearity of \eqref{P2}, which increases with $n$. In Section \ref{sec:second} we study the auxiliary problem \eqref{S3}, which allows us to prove Lipschitz regularity of the maximal viscosity solution of \eqref{P2} in arbitrary dimension (see Lemma \ref{lem:vexun}), provided the strictly stronger condition $f^\frac{1}{n} \in C^{0,1}([0,1]^n)$ holds. We expect Theorem \ref{thm:vlipd2} to hold in dimensions $n\geq 3$ under the natural condition $f \in C^{0,1}([0,1]^n)$, but we currently do not know how to prove this. 

\subsection{The scheme (S3)}
\label{sec:ratesS3}

In this section we prove Lipschitz regularity and rates of convergence for the scheme \eqref{S3}. This establishes one of our main results (Theorem \ref{thm:rate1}). These results are used in Section \ref{sec:ratesS2} to prove similar rates of convergence for \eqref{S2}.

Throughout this section we set
\[H(x,z,p) = \prod_{i=1}^n (z + nx_i p_i)_+,\]
for $(x,z,p) \in \R^n \times \R \times \R^n$.

\subsubsection{Lipschitz regularity}
\label{sec:S3lip}

Let us first give a formal argument suggesting a Lipschitz estimate on the maximal bounded viscosity solution $w$ of \eqref{P3}. Assuming $w$ is smooth and $f>0$, we can take the logarithm of \eqref{P3} to obtain
\[\sum_{j=1}^n \log(w + nx_j w_{x_j}) = \log(f)\]
Differentiating both sides in the variable $x_i$ and setting $\phi = w_{x_i}$ we have
\[\sum_{j=1}^n \frac{(1 + \delta_{i,j})\phi + nx_j\phi_{x_j}}{w + nx_j w_{x_j}} = \frac{f_{x_i}}{f},\]
where $\delta_{i,j}=1$ if $i=j$ and $\delta_{i,j} = 0$ if $i\neq j$.
Notice that at a positive maximum of $\phi$ we have $\phi_{x_j}=0$ for all $j$ and hence
\[nf^{-\frac{1}{n}}\phi = n\phi\prod_{j=1}^n(w + nx_j w_{x_j})^{-\frac{1}{n}}\leq \sum_{j=1}^n \frac{(1 + \delta_{i,j})\phi}{w + nx_j w_{x_j}} = \frac{f_{x_i}}{f},\]
where we employed the inequality of arithmetic and geometric means in the first inequality. This suggests that  
\begin{equation}\label{eq:wx}
w_{x_i} \leq \sup_{[0,1]^n} \frac{1}{n} f^{1-\frac{1}{n}}f_{x_i} = \sup_{[0,1]^n} \partial_{x_i}( f^\frac{1}{n}).
\end{equation}

In Theorem \ref{thm:lip} and Lemma \ref{lem:wlip}, we make this argument rigorous by applying it to the numerical scheme \eqref{S3}. One obvious gap in the argument above is the existence of a positive maximum for $w_{x_i}$. Indeed, when $w$ is any of the infinitely many unbounded viscosity solutions of \eqref{P3}, $w_{x_i}$ either has no maximum value, or no minimum value, on $(0,1]^n$. This is a reflection of the fact that \eqref{S3} has no boundary condition. This issue is resolved by showing (in Proposition \ref{prop:extension}) that the numerical solution $w_h$ can be extended by projection to a solution of \eqref{S3} on the domain $(-\infty,1]^n_h$. 

For $x \in \R^n$, let us set 
\[x_+ =  ((x_1)_+,\dots,(x_n)_+).\]
\begin{proposition}\label{prop:extension}
Suppose $f$ is nonnegative, and let $w$ be a sub- (resp.~super-) solution of \eqref{S3}. Then $\bar{w}(x) := w(x_+)$ is a sub- (resp.~super-) solution of
\begin{equation}\label{eq:extension}
H(x_+,\bar{w}(x),D^-\bar{w}(x)) = \bar{f}(x) \ \ \text{for all } x \in (-\infty,1]^n_h,
\end{equation}
where $\bar{f}(x):=f(x_+)$.
\end{proposition}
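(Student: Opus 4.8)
The plan is to reduce the statement to a single pointwise identity on the lattice. Fix $x \in (-\infty,1]^n_h$ and set $y = x_+$, so that $y \in [0,1]^n_h$ and $\bar{w}(x) = w(y)$. The key claim is that for every $i \in \{1,\dots,n\}$,
\[
\bar{w}(x) + n(x_i)_+ D^-_i\bar{w}(x) = w(y) + ny_i D^-_iw(y),
\]
where the right-hand side is interpreted as $w(y)$ when $y_i = 0$ --- the only case in which $D^-_iw(y)$ is undefined, and in which it enters \eqref{S3} solely through the product $ny_iD^-_iw(y)$. Once this identity is in hand, taking positive parts and multiplying over $i$ gives $H(x_+,\bar{w}(x),D^-\bar{w}(x)) = H(y,w(y),D^-w(y))$; applying the sub- (resp.\ super-) solution property of $w$ for \eqref{S3} at the point $y$ and using $f(y) = f(x_+) = \bar{f}(x)$ then yields the desired inequality $H(x_+,\bar{w}(x),D^-\bar{w}(x)) \le \bar{f}(x)$ (resp.\ $\ge$), which is exactly \eqref{eq:extension} for $\bar{w}$.

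To establish the identity I would argue by cases on the sign of $x_i$. If $x_i > 0$, then $x_i \ge h$ because $x \in \Z_h^n$; hence $(x_i)_+ = x_i = y_i$, and since $x_i - h \ge 0$ one has $(x-he_i)_+ = x_+ - he_i = y - he_i$, so that $D^-_i\bar{w}(x) = h^{-1}\big(w(y) - w(y-he_i)\big) = D^-_iw(y)$ and the $i$-th terms coincide. If $x_i \le 0$, then $(x_i)_+ = 0 = y_i$, so both sides of the identity reduce to $w(y) = \bar{w}(x)$; in this case no definedness of $D^-_i\bar{w}(x)$ is even needed, although one may note that $(x-he_i)_+ = x_+$ here, so that $D^-_i\bar{w}(x) = 0$ is in fact well defined. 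The supersolution case is handled identically with every inequality reversed.

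I do not expect a genuine analytic difficulty in this argument --- it is essentially bookkeeping about the projection $x \mapsto x_+$ on $\Z_h^n$. The two points that need care are (i) the convention that the degenerate term $nx_iD^-_iw$ is read as $0$ wherever $x_i = 0$, which is precisely what makes the scheme \eqref{S3} meaningful at boundary nodes, and (ii) the observation that a lattice point with positive $i$-th coordinate must have $x_i \ge h$, so that backward differencing in direction $i$ commutes with the positive-part projection. Structurally, the extension works for the same reason the boundary condition is ``encoded'' in \eqref{P3}: the factor $nx_i$ annihilates all dependence of the scheme on the $i$-th backward difference exactly on the set $\{x_i = 0\}$, so replacing negative coordinates by $0$ cannot turn an inequality into its opposite.
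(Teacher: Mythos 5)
Your proposal is correct and follows essentially the same route as the paper: the same pointwise identity $(x_i)_+D^-_i\bar{w}(x) = (x_i)_+D^-_iw(x_+)$, proved by the same case split on whether $x_i \le 0$ or $x_i \ge h$, followed by taking positive parts, multiplying, and invoking the sub-/supersolution property of $w$ at $x_+$. No gaps.
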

\begin{proof}
We claim that 
\[ (x_i)_+ D^-_i\bar{w}(x) = (x_i)_+ D^-_iw(x_+) \ \ \text{ for all } x \in (-\infty,1]^n_h, \, i \in \{1,\dots,n\}.\]
If $x_i \leq 0$ the result is trivial. Hence we may assume that $x_i \geq h$, so that $(x-he_i)_+ = x_+ - he_i$. Then
\[ D^-_i\bar{w}(x) =  \frac{w(x_+) - w((x-he_i)_+)}{h} = \frac{w(x_+) - w(x_+-he_i)}{h} = D^-_iw(x_+),\]
which establishes the claim.

For $x \in (-\infty,1]^n_h\setminus [0,1]^n_h$,  we have $x_+ \in [0,1]^n_h$ and 
\[ \prod_{i=1}^n \left(\bar{w}(x) + n(x_i)_+ D^-_i\bar{w}(x)\right)_+= \prod_{i=1}^n \left(w(x_+) + n(x_i)_+ D^-_iw(x_+)\right)_+.\]
Since $\bar{f}(x) = f(x_+)$, the result immediately follows.
\end{proof}
We also note that we can extend the product rule for finite differences in Proposition \ref{prop:finite-diff-properties} (iii) by induction as follows:
\begin{lemma}\label{lem:prod-rule}
For all $k \in \{1,\dots,n\}$ and $N\geq 2$ we have
\begin{equation}\label{eq:prod-rule}
D^\pm_k(u_1\cdots u_N) = \sum_{j=1}^N D^\pm_k u_j \prod_{i\neq j} u_i+ \sum_{j=2}^N (\pm h)^{j-1} \sum_{|I|=j} \prod_{i \in I} D_k^\pm u_i \prod_{i\not\in I} u_i, 
\end{equation}
where the final summation is over all $I \subset \{1,\dots,N\}$ with $|I|=j$.
\end{lemma}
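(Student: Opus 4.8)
The plan is to argue by induction on $N$. The base case $N=2$ is exactly Proposition~\ref{prop:finite-diff-properties}~(iii). For the inductive step, assume \eqref{eq:prod-rule} holds for some $N\geq 2$, write $u_1\cdots u_{N+1} = (u_1\cdots u_N)\,u_{N+1}$, and apply the two-factor product rule of Proposition~\ref{prop:finite-diff-properties}~(iii) to the factors $u_1\cdots u_N$ and $u_{N+1}$:
\[
D^\pm_k(u_1\cdots u_{N+1}) = u_{N+1}\,D^\pm_k(u_1\cdots u_N) + (u_1\cdots u_N)\,D^\pm_k u_{N+1} \pm h\,D^\pm_k(u_1\cdots u_N)\,D^\pm_k u_{N+1}.
\]
Into the first and third terms I would substitute the inductive hypothesis for $D^\pm_k(u_1\cdots u_N)$, using that $D^\pm_k$ is $\R$-linear (immediate from its definition) to expand everything into a sum of monomials of the form $\prod_{i\in I}D^\pm_k u_i \prod_{i\notin I}u_i$ indexed by subsets $I\subset\{1,\dots,N+1\}$.

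The remaining work is purely combinatorial: collect, for each $I\subset\{1,\dots,N+1\}$, the coefficient of its monomial and check it equals $1$ when $|I|=1$ and $(\pm h)^{|I|-1}$ when $|I|\geq 2$. I would organize this by whether $N+1\in I$. The singleton $I=\{N+1\}$ comes only from $(u_1\cdots u_N)D^\pm_k u_{N+1}$; the singletons $I=\{j\}$ with $j\leq N$ come only from the first sum of the inductive hypothesis inside $u_{N+1}D^\pm_k(u_1\cdots u_N)$; both have coefficient $1$. If $|I|=j\geq 2$ and $N+1\notin I$, the monomial arises only from the double-summation part of $u_{N+1}D^\pm_k(u_1\cdots u_N)$, with coefficient $(\pm h)^{j-1}$. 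If $|I|=j\geq 2$ and $N+1\in I$, write $I=I'\cup\{N+1\}$ with $I'\subset\{1,\dots,N\}$ and $|I'|=j-1$: when $|I'|=1$ the monomial comes from $\pm h\,D^\pm_k u_{N+1}$ times the first sum of the inductive hypothesis, with coefficient $\pm h=(\pm h)^{j-1}$; when $|I'|\geq 2$ it comes from $\pm h\,D^\pm_k u_{N+1}$ times the double-summation part, with coefficient $(\pm h)\cdot(\pm h)^{|I'|-1}=(\pm h)^{j-1}$. Since these cases partition the monomials and each $I$ is produced exactly once, the coefficients assemble to exactly the right-hand side of \eqref{eq:prod-rule} with $N$ replaced by $N+1$.

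I do not expect any genuine obstacle here; the statement is a bookkeeping identity, and the only mild care needed is to keep the powers of $\pm h$ straight across the reindexing $j\mapsto j+1$ that occurs when the third term $\pm h\,D^\pm_k(u_1\cdots u_N)\,D^\pm_k u_{N+1}$ contributes subsets of size $\geq 3$ containing $N+1$, and to verify that no monomial is double-counted — which is immediate from the partition above.
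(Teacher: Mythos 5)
Your proof is correct and follows essentially the same route as the paper: induction on $N$, applying the two-factor product rule of Proposition~\ref{prop:finite-diff-properties}~(iii) to $(u_1\cdots u_N)\cdot u_{N+1}$, substituting the inductive hypothesis, and collecting monomials according to whether $N+1$ lies in the index set $I$. The paper's proof carries out exactly this bookkeeping (its citation of part (ii) for the base case appears to be a typo for part (iii), which is the identity you correctly invoke).
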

The proof of Lemma \ref{lem:prod-rule} is deferred to the appendix. We now establish a Lipschitz estimate on the solution $w_h$ of \eqref{S2}.
\begin{theorem}\label{thm:lip}
Suppose that $f \in C^{0,1}([0,1]^n)$ is positive on $[0,1]^n$, and let $w_h$ be the maximal solution of \eqref{S3}. Then 
\begin{equation}\label{eq:bound}
\inf_{x \in U_k} \frac{1}{n} f(x + he_k)^{\frac{1}{n}-1} (D^+_kf(x))_- \leq D^+_kw_h(x) \leq \sup_{U_k} \frac{1}{n} f^{\frac{1}{n}-1} (D^+_kf)_+.
\end{equation}
for all $x \in U_k := \{x \in [0,1]^n_h \, : \, x_k \leq 1-h\}$. In particular
\begin{equation}\label{eq:alt_bound}
\sup_{U_k}|D^+_kw_h| \leq \frac{1}{n}[f]_{1;[0,1]^n}\sup_{[0,1]^n}f^{\frac{1}{n}-1}.
\end{equation}
\end{theorem}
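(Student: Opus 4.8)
The strategy is to make rigorous, at the discrete level, the formal differentiation argument sketched before the theorem statement. Fix $k$ and set $\psi(x) = D^+_k w_h(x)$. The plan is to apply the discrete operator $D^+_k$ to the scheme equation \eqref{S3}, use the product rule for finite differences (Lemma \ref{lem:prod-rule}) to obtain a linear-in-$\psi$ discrete equation with a zeroth-order term of favorable sign, and then run a comparison argument against the constant supersolution/subsolution
\[
\bar\psi \equiv \sup_{U_k}\tfrac1n f^{\frac1n-1}(D^+_k f)_+,\qquad
\underline\psi \equiv \inf_{x\in U_k}\tfrac1n f(x+he_k)^{\frac1n-1}(D^+_k f(x))_-.
\]
First, though, I would invoke Proposition \ref{prop:extension}: extend $w_h$ to $(-\infty,1]^n_h$ by projection onto the positive orthant so that $\bar w(x) = w_h(x_+)$ solves \eqref{eq:extension}. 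This removes the artificial boundary issue and guarantees that $\psi = D^+_k w_h$ actually attains a maximum (and minimum) on the extended grid, which was the "obvious gap" flagged in the text — on a genuinely unbounded solution of \eqref{P3} the derivative need not be bounded, but the bounded, projection-extended numerical solution forces the extremum to occur at a true interior point where the discrete equation applies.

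**Key steps, in order.** (1) Work with the extended solution on $(-\infty,1]^n_h$; note that at points with some $x_i\le 0$ the factor $nx_i D^-_i w$ is replaced by $n(x_i)_+D^-_i w$, and for the projected solution the terms with $(x_i)_+=0$ contribute exactly $w(x_+)$, so the scheme reads $\prod_{i=1}^n (\bar w(x) + n(x_i)_+ D^-_i\bar w(x))_+ = \bar f(x)$. (2) Since $f>0$ and the product equals $\bar f>0$, each factor $g_i(x) := \bar w(x) + n(x_i)_+ D^-_i\bar w(x)$ is strictly positive, so we may drop the positive parts. (3) Apply $D^+_k$ to both sides of $\prod_i g_i = \bar f$; by Lemma \ref{lem:prod-rule} this gives $\sum_j D^+_k g_j \prod_{i\ne j}g_i + (\text{higher-order } h\text{-terms}) = D^+_k \bar f$. (4) Compute $D^+_k g_i$ in terms of $\psi = D^+_k\bar w$ and its finite differences, using the product rule Proposition \ref{prop:finite-diff-properties}(ii)–(iii) on the term $n(x_i)_+ D^-_i\bar w$ (note $D^+_k\big((x_i)_+\big)$ is $1$ when $i=k$ and $x_k$ is in range, and $0$ otherwise); this produces a discrete transport-type expression $\sum_i a_i(x) D^-_i\psi(x) + b(x)\psi(x) = D^+_k\bar f(x) + (\text{error})$ where $a_i\ge 0$ (coefficients involve $g_i>0$ and $(x_i)_+\ge0$), and — crucially — after dividing through by $\prod_i g_i = \bar f$ and using the AM–GM inequality exactly as in the formal computation, $b(x) \ge n \bar f^{-1/n} > 0$ at any point that is a maximizer of $\psi$ (where $D^-_i\psi \ge 0$). (5) At a maximizer $x^\star$ of $\psi$ over the (finite) extended grid — which by the projection trick and the boundary behavior lies in $U_k$ — all the transport terms $a_i D^-_i\psi$ have the sign that pushes the inequality the right way, and the higher-order $h$-error terms can be absorbed because they too carry factors of $D^-_i\psi\ge 0$ or vanish at the extremum; one concludes $n\bar f^{-1/n}\psi(x^\star) \le D^+_k\bar f(x^\star)/\bar f(x^\star) \cdot \bar f(x^\star)^{?}$… i.e. precisely $\psi(x^\star) \le \tfrac1n f^{1-1/n}(x^\star)(D^+_k f)_+(x^\star) = \tfrac1n f^{1/n-1}\cdot f \cdot$ — reorganizing, $\psi(x^\star)\le \sup_{U_k}\tfrac1n f^{\frac1n-1}(D^+_kf)_+$. (6) The lower bound is symmetric, looking at a minimizer of $\psi$ and reversing inequalities. (7) Finally \eqref{eq:alt_bound} follows from \eqref{eq:bound} by bounding $(D^+_kf)_\pm \le [f]_{1;[0,1]^n}$ and $f^{\frac1n-1}\le \sup f^{\frac1n-1}$ (valid since $\tfrac1n-1<0$ and $f>0$, so $f^{\frac1n-1}$ is continuous and bounded on the compact domain).

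**Main obstacle.** The delicate point is step (4)–(5): carefully tracking the discrete product rule so that, after applying $D^+_k$, every term that does \emph{not} vanish at the extremum of $\psi$ either (i) has the correct sign or (ii) is an $O(h)$ remainder that still carries a factor $D^-_i\psi$ of the correct sign at the extremum. The appearance of backward differences $D^-_i$ inside the scheme but a forward difference $D^+_k$ in the differentiated quantity means one must use the commutation identities in Proposition \ref{prop:finite-diff-properties}(i) ($D^+_k D^-_j = D^-_j D^+_k$) and handle the mixed term where $k=i$ separately — there the extra $D^+_k\big((x_k)_+ D^-_k\bar w\big)$ expands via the Leibniz rule (iii) into $D^-_k\psi$-type terms plus a term proportional to $\psi$ itself that must be checked to have the right sign (it does, because $(x_k)_+\ge 0$). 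The AM–GM step that converts $\sum_j \psi/g_j$ into $n\psi \prod_j g_j^{-1/n} = n\psi \bar f^{-1/n}$ is exactly the discrete analogue of the formal argument and goes through verbatim once the extremum condition $D^-_i\psi\ge0$ (resp.\ $\le 0$) is in hand. Getting the bookkeeping of the higher-order $h$-terms from Lemma \ref{lem:prod-rule} to be harmless is the only genuinely technical part; everything else is a transcription of the formal computation.
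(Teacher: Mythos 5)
Your proposal is correct and follows essentially the same route as the paper: extend $w_h$ by projection (Proposition \ref{prop:extension}) so that $\psi=D^+_kw_h$ attains its positive maximum (resp.\ negative minimum) at a point of $U_k$, drop the positive parts since $f>0$ forces each factor $a_j=w+n(x_j)_+D^-_jw$ to be positive, apply $D^+_k$ to the product via Lemma \ref{lem:prod-rule}, observe that at the extremum $D^+_ka_j=(1+n\delta_{j,k})\psi+nx_jD^-_j\psi$ has the sign of $\psi$ so the higher-order $h$-terms are harmless, and finish with AM--GM to get $D^+_kf(x_0)\geq nf(x_0)^{\frac{n-1}{n}}\psi(x_0)$. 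The only blemishes are cosmetic (the garbled algebra at the end of your step (5)), not structural.
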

\begin{proof}
For notational convenience, we will write $w$ in place of $w_h$ throughout the proof.

By Proposition \eqref{eq:extension}, we can extend $w$ and $f$ to functions on $(-\infty,1]^n_h$ by setting $w(x) = w(x_+)$ and $f(x) = f(x_+)$, and we have
\begin{equation}\label{eq:what}
H(x_+,w(x),D^-w(x)) = f(x) \ \ \text{for all } x \in (-\infty,1]^n_h.
\end{equation}
Let $\phi(x) = D^+_kw(x)$ for all $x \in (-\infty,1]^n_h$ with $x_k \leq 1-h$. 

The proof is split into two steps.

1. We first show that
\begin{equation}\label{eq:b1}
\sup_{U_k} D^+_kw \leq \sup_{U_k} \frac{1}{n} f^{\frac{1}{n}-1} (D^+_kf)_+.
\end{equation}
We may assume $\phi$ is positive somewhere, otherwise \eqref{eq:b1} trivially holds. For any $x$ such that $x_k \leq -h$, we have $x_+ = (x+he_k)_+$. This implies that $\phi(x) = 0$. Likewise, whenever $x_k \geq 0$, $(x+he_k)_+ = x_+ + he_k$ and we have $\phi(x) = \phi(x_+)$. It follows that $\phi$ attains its positive maximum value at some $x_0\in U_k$. Therefore
\begin{equation}\label{eq:z}
\phi(x_0) > 0 \ \text{ and } \  D^-_j \phi(x_0) \geq 0  \ \text{ for all } j \in \{1,\dots,n\}.
\end{equation}   

Since $f(x) > 0$,
\begin{equation}\label{eq:scheme2}
f(x) = H(x_+,w(x),D^-w(x))= \prod_{j=1}^n a_j(x),
\end{equation}
where
\[a_j(x) = w(x) + n(x_j)_+ D^-_jw(x) > 0.\] 
By Lemma \ref{lem:prod-rule}
\begin{equation}\label{eq:dk}
D^+_kf(x) = \sum_{j=1}^n D^+_k a_j(x)\prod_{i\neq j} a_i(x) + \sum_{j=2}^N h^{j-1} \sum_{|I|=j} \prod_{i \in I} D_k^+ a_i(x) \prod_{i\not\in I} a_i(x).
\end{equation}
Using Proposition \ref{prop:finite-diff-properties} (ii) we have
\[D^+_ka_j(x) = D^+_kw(x) + n\delta_{j,k}D^-_jw(x+he_k) + nx_jD^+_kD^-_jw(x),\]
for $x \in [0,1]^n_h$. Invoking Proposition \ref{prop:finite-diff-properties} (i) and noting that $D_k^- w(x+he_k) = D^+_kw(x)$, we deduce
\begin{equation}\label{eq:dkaj}
D^+_ka_j(x) = (1 + n\delta_{j,k})\phi(x) + nx_jD^-_j\phi(x).
\end{equation}
Inserting \eqref{eq:z} into  \eqref{eq:dkaj} yields $D^+_ka_j(x_0) \geq \phi(x_0) > 0$ for all $j$. Combining this with \eqref{eq:dk} and recalling that $a_i(x_0) > 0$ gives
\begin{align}\label{eq:dk2}
D^+_kf(x_0) &= \sum_{j=1}^n D^+_k a_j(x_0)\prod_{i\neq j} a_i(x_0) + \sum_{j=2}^N h^{j-1} \sum_{|I|=j} \prod_{i \in I} D_k^+ a_i(x_0) \prod_{i\not\in I} a_i(x_0)\notag\\
& \geq \phi(x_0) \sum_{j=1}^n \prod_{i\neq j} a_i(x_0).
\end{align}
Using the inequality of geometric and arithmetic means yields
\begin{equation}\label{eq:dk3}
\sum_{j=1}^n \prod_{i\neq j} a_i(x_0)\geq n\left( \prod_{j=1}^n \prod_{i\neq j}a_i(x_0) \right)^\frac{1}{n} = n\left(\prod_{i=1}^n  a_i(x_0)\right)^\frac{n-1}{n} = nf(x_0)^\frac{n-1}{n}.
\end{equation}
Combining \eqref{eq:dk2} and \eqref{eq:dk3} establishes \eqref{eq:b1}.

2. We now show that
\begin{equation}\label{eq:b2}
\inf_{U_k} D^+_kw \geq \inf_{x \in U_k} \frac{1}{n} f(x+he_k)^{\frac{1}{n}-1} (D^+_kf(x))_-.
\end{equation}
We may assume that $\phi$ assumes negative values, otherwise \eqref{eq:b2} is trivial. As in the first part of the proof, $\phi$ attains its negative minimum at some $x_0 \in U_k$. Therefore
\begin{equation}\label{eq:z2}
\phi(x_0) <0 \ \text{ and } \  D^-_j \phi(x_0) \leq 0  \ \text{ for all } j \in \{1,\dots,n\}.
\end{equation}   
By \eqref{eq:dkaj}, $D^+_ka_j(x_0)\leq \phi(x_0)<0$ for all $j$. Combining this with Lemma \ref{lem:prod-rule} yields
\begin{align*}\label{eq:dk_2}
D^+_kf(x) &= \sum_{j=1}^n D^+_k a_j(x)\prod_{i\neq j} a_i(x+he_k) - \sum_{j=2}^N h^{j-1} \sum_{|I|=j} \prod_{i \in I} (-D_k^+ a_i(x)) \prod_{i\not\in I} a_i(x+he_k)\\
& \leq \phi(x_0) \sum_{j=1}^n \prod_{i\neq j} a_i(x_0 + he_k)\\
&\leq n\phi(x_0)\left(\prod_{j=1}^n\prod_{i\neq j} a_i(x_0 + he_k)\right)^\frac{1}{n} = nf(x_0+he_k)^\frac{n-1}{n}\phi(x_0).
\end{align*}
This completes the proof.
\end{proof}
\begin{remark}\label{rem:wlip}
Since $w_h \to w$ uniformly on $[0,1]^n$, it is an easy consequence of Theorem \ref{thm:lip} that $w \in C^{0,1}([0,1]^n)$ and
\begin{equation}\label{eq:holder}
[w]_{1;[0,1]^n} \leq \frac{1}{\sqrt{n}} [f]_{1;[0,1]^n}\sup_{[0,1]^n} f^{\frac{1-n}{n}},
\end{equation}
whenever $f \in C^{0,1}([0,1]^n)$ is positive. A sharper result is contained in the following lemma.
\end{remark}
\begin{lemma}\label{lem:wlip}
Let $f$ be a nonnegative function for which $f^\frac{1}{n} \in C^{0,1}([0,1]^n)$, and let $w$ be the maximal bounded viscosity solution of \eqref{P3}. Then $w \in C^{0,1}([0,1]^n)$ and 
\begin{equation}\label{eq:wlip}
[w]_{1;[0,1]^n} \leq \sqrt{n}[f^\frac{1}{n}]_{1;[0,1]^n}.
\end{equation}
\end{lemma}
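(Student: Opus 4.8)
The plan is to deduce the sharper estimate \eqref{eq:wlip} from the already-established discrete estimate in Theorem \ref{thm:lip}, by applying it not to $f$ itself but to the perturbed data $f_\eps := (f^{1/n} + \eps)^n$, and then passing to the limit $\eps \to 0$. The point is that Theorem \ref{thm:lip} requires $f$ to be strictly positive and Lipschitz; but we are only given that $g := f^{1/n}$ is Lipschitz (and nonnegative), so $f$ itself may vanish and need not be Lipschitz when $n \geq 2$. Replacing $g$ by $g_\eps := g + \eps$ makes the datum $f_\eps = g_\eps^n$ strictly positive (since $g_\eps \geq \eps > 0$) and Lipschitz (a power of a positive Lipschitz function), so Theorem \ref{thm:lip} applies to the scheme \eqref{S3} with right-hand side $f_\eps$.

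The key computation is the bound \eqref{eq:alt_bound}, or rather its refinement coming from \eqref{eq:bound}: for the datum $f_\eps$ we have $f_\eps^{1/n} = g_\eps$, so
\[
\frac{1}{n} f_\eps^{\frac{1}{n}-1} \partial_{x_k} f_\eps = \frac{1}{n} g_\eps^{1-n}\cdot n g_\eps^{n-1}\partial_{x_k} g_\eps = \partial_{x_k} g_\eps = \partial_{x_k} g,
\]
and at the discrete level the analogous identity shows that the upper and lower bounds in \eqref{eq:bound} for the scheme with datum $f_\eps$ are controlled purely by the one-sided difference quotients $(D^+_k g)_\pm$, hence in absolute value by $[g]_{1;[0,1]^n} = [f^{1/n}]_{1;[0,1]^n}$, \emph{uniformly in} $\eps$ and $h$. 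Therefore the maximal solution $w_h^\eps$ of \eqref{S3} with datum $f_\eps$ satisfies $|D^+_k w_h^\eps| \le [f^{1/n}]_{1;[0,1]^n}$ on $U_k$ for every $k$, and summing over $k$ and taking square roots gives the discrete gradient bound $\sqrt{n}\,[f^{1/n}]_{1;[0,1]^n}$ with the desired constant. Here one must be slightly careful to run the argument of Theorem \ref{thm:lip} keeping track of the sharper estimate \eqref{eq:bound} rather than the cruder \eqref{eq:alt_bound}, so that the constant comes out as $\sqrt n$ and not $\tfrac1{\sqrt n}[f]_{1}\sup f^{(1-n)/n}$; this is where the improvement over Remark \ref{rem:wlip} is located.

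Next I would pass to the limit. By Theorem \ref{thm:convS3}, for fixed $\eps$ the solutions $w_h^\eps$ converge uniformly on $(0,1]^n$ as $h\to 0$ to $w^\eps$, the maximal bounded viscosity solution of \eqref{P3} with datum $f_\eps$; the uniform-in-$h$ Lipschitz bound just obtained therefore passes to $w^\eps$, giving $[w^\eps]_{1;[0,1]^n} \le \sqrt n\,[f^{1/n}]_{1;[0,1]^n}$, hence $w^\eps \in C^{0,1}([0,1]^n)$ with the stated constant, \emph{uniformly in} $\eps$. Since $f_\eps \to f$ uniformly as $\eps \to 0$ and $0 < \eps^n \le f_\eps$, the comparison principle for \eqref{P3} (Theorem \ref{thm:comparison-w})—or alternatively the explicit bounds $\inf f_\eps^{1/n} \le w^\eps \le \sup f_\eps^{1/n}$ together with stability of the maximal bounded viscosity solution under uniform perturbation of the datum—shows $w^\eps \to w$ uniformly on compact subsets of $(0,1]^n$, and the uniform Lipschitz bound lets us extend $w$ to a Lipschitz function on all of $[0,1]^n$ satisfying \eqref{eq:wlip}. (That the limit is indeed the maximal bounded viscosity solution of \eqref{P3} with datum $f$ follows from the Barles–Souganidis stability argument, exactly as in the proof of Theorem \ref{thm:convS3}, applied to the doubly-indexed family $w_h^\eps$ along a suitable diagonal sequence.)

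The main obstacle is the very last step: justifying $w^\eps \to w$ in a way that is compatible with the lack of a boundary condition for \eqref{P3} and with the nonuniqueness of unbounded solutions. One cannot simply invoke a classical comparison principle because \eqref{P3} has no prescribed boundary data and infinitely many unbounded solutions; the argument must go through the characterization of $w$ (and $w^\eps$) as the \emph{maximal bounded} solution, using that both satisfy the sign condition \eqref{eq:wcond}. The cleanest route is probably to work at the level of $u := n(x_1\cdots x_n)^{1/n} w$ as in the proof of Theorem \ref{thm:comparison-w}: the corresponding $u^\eps$ solve \eqref{P1} with data $f_\eps \to f$ uniformly, and uniqueness of the nondecreasing viscosity solution of \eqref{P1} (Lemma \ref{lem:comp}) together with a standard stability estimate gives $u^\eps \to u$ uniformly on $[0,1]^n$, whence $w^\eps \to w$ locally uniformly on $(0,1]^n$. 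Everything else is routine: the finite-difference identities are already recorded in Proposition \ref{prop:finite-diff-properties} and Lemma \ref{lem:prod-rule}, and the AM–GM step is the same one used in Theorem \ref{thm:lip}.
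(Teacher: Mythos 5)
Your proposal follows essentially the same route as the paper: regularize via $f_\eps=(f^{1/n}+\eps)^n$, apply the sharper bound \eqref{eq:bound} of Theorem \ref{thm:lip} to get a per-coordinate discrete Lipschitz estimate with constant $[f^{1/n}]_{1;[0,1]^n}$, and pass $h\to0$ then $\eps\to0$ using Theorem \ref{thm:comparison-w} together with stability. The one caveat is your claim that $|D^+_kw_h^\eps|\le[f^{1/n}]_{1;[0,1]^n}$ holds \emph{uniformly in $\eps$ and $h$}: since $f_\eps$ is only Lipschitz, the discrete analogue of the identity $\tfrac1n f_\eps^{\frac1n-1}\partial_{x_k}f_\eps=\partial_{x_k}(f^{1/n})$ holds only up to a multiplicative factor $1+O(h/\eps)$ (from factoring $g_\eps(x+he_k)^n-g_\eps(x)^n$), which is harmless because you send $h\to0$ first for fixed $\eps$; the paper sidesteps this entirely by additionally mollifying $f^{1/n}$, so that the clean smooth-case computation $D^+_kf\le f_{x_k}+Ch$ applies.
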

\begin{proof}
We first assume that $f \in C^2([0,1]^n)$ and $f>0$. For $h>0$, let $w_h$ be the maximal solution of \eqref{S3}. Since $f \in C^2([0,1]^n)$, there exists a constant $C>0$ such that
\[D^+_if(x) \leq f_{x_i}(x) + Ch,\]
for all $i$ and all  $x \in U_i :=\{x \in [0,1]^n_h \, : \, x_i \leq 1-h\}$. By Theorem \ref{thm:lip}
\begin{align*}
D^+_iw_h(x) &\leq \sup_{x \in U_i} \frac{1}{n} f(x)^{\frac{1}{n}-1}(D^+_if(x))_+\\
&\leq \sup_{x \in U_i} \left( \frac{1}{n}f(x)^{\frac{1}{n}-1}f_{x_i}(x) + Ch\right)_+\\
&=\sup_{U_i} \left(  (f^\frac{1}{n})_{x_i} + Ch\right)_+\\
&\leq [f^\frac{1}{n}]_{1;[0,1]^n} + Ch.
\end{align*}
The opposite inequality is obtained similarly, and we find that
\[|D^+_iw_h(x)| \leq [f^\frac{1}{n}]_{1;[0,1]^n} + Ch,\]
for all $i$ and all $x \in U_i$. The estimate \eqref{eq:wlip} follows from the uniform convergence $w_h \to w$ as $h\to 0$.

We now suppose that $f$ is nonnegative and $f^\frac{1}{n} \in C^{0,1}([0,1]^n)$. Extend $f$ to a function on $\R^n$ by setting $f(x) = f(\pi(x))$ for $x \not\in [0,1]^n$, where $\pi:\R^n \to [0,1]^n$ is the closest point projection. Since $\pi$ is 1-Lipschitz, $[f^\frac{1}{n}]_{1,\R^n} = [f^\frac{1}{n}]_{1,[0,1]^n}$. Let $\eta^\eps$ denote the standard mollifier, and define
\[f^\eps = \left( \eta^\eps * f^\frac{1}{n} + \eps\right)^n.\]
It is easy to verify that $f^\eps \in C^\infty([0,1]^n)$,  $f^\eps \to f$ uniformly on $[0,1]^n$ as $\eps\to 0$, and 
\begin{equation}\label{eq:moll}
[(f^\eps)^\frac{1}{n}]_{1;[0,1]^n} \leq [f^\frac{1}{n}]_{1;[0,1]^n}.
\end{equation}
Let $w^\eps$ denote the maximal bounded viscosity solution of \eqref{P3} corresponding to $f^\eps$. Since $f^\eps$ is smooth and positive, the argument above yields
\[[w^\eps]_{1;[0,1]^n} \leq \sqrt{n}[(f^\eps)^\frac{1}{n}]_{1;[0,1]^n} \stackrel{\eqref{eq:moll}}{\leq}\sqrt{n} [f^\frac{1}{n}]_{1;[0,1]^n}.\]
By the comparison principle for \eqref{P3} (Theorem \ref{thm:comparison-w}) and standard results on viscosity solutions~\cite{crandall1992}, we have $w^\eps \to w$ uniformly on $[0,1]^n$ as $\eps \to 0$. Therefore 
\[[w]_{1;[0,1]^n} \leq \sqrt{n} [f^\frac{1}{n}]_{1;[0,1]^n},\]
which completes the proof.
\end{proof}
\begin{remark}\label{rem:tight}
The regularity result in Lemma \ref{lem:wlip} is tight, and cannot be significantly generalized. For intance, if $f(x) =2x_1$, then $f$ is smooth, but $f(x)^\frac{1}{n}= (2x_1)^\frac{1}{n}$ is not Lipschitz on $[0,1]^n$. The corresponding solution of \eqref{P3} is $w(x) = x_1^\frac{1}{n}$, which also fails to be Lipschitz on $[0,1]^n$. Note that the solution of \eqref{P2} is $v(x) = (x_1\cdots x_n)w(x)^n = x_1^2x_2\cdots x_n$, which is smooth on $[0,1]^n$. 
\end{remark}
We can now extend Lipschitz regularity to viscosity solutions of \eqref{P2}.
\begin{lemma}\label{lem:vexun}
Suppose $f$ is a nonnegative function for which $f^\frac{1}{n} \in C^{0,1}([0,1]^n)$, and let $v$ be the maximal viscosity solution of \eqref{P2}. Then $v \in C^{0,1}([0,1]^n)$ and 
\begin{equation}\label{eq:vholder}
[v]_{1;[0,1]^n} \leq C \|f^\frac{n-1}{n}\|_{L^\infty([0,1]^n)}\|f^\frac{1}{n}\|_{C^{0,1}([0,1]^n)}.
\end{equation}
\end{lemma}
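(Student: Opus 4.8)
The plan is to reduce everything to Lemma~\ref{lem:wlip} via the algebraic identities among $u$, $v$, and $w$ established earlier. Recall that $v = u^n/n^n$, where $u$ is the nondecreasing viscosity solution of \eqref{P1}, and that $u(x) = n(x_1\cdots x_n)^\frac{1}{n} w(x)$ on $(0,1]^n$, where $w$ is the maximal bounded viscosity solution of \eqref{P3}. Combining these gives the product formula
\[
v(x) = (x_1\cdots x_n)\, w(x)^n \qquad \text{for all } x \in (0,1]^n .
\]
Since $0 \le w \le \sup_{[0,1]^n} f^\frac{1}{n} =: M$ (from the discussion preceding Theorem~\ref{thm:comparison-w}) and $\sup_{[0,1]^n} f^\frac{1}{n} = \|f^\frac{1}{n}\|_{L^\infty([0,1]^n)} < \infty$ under our hypotheses, the right-hand side is bounded and extends continuously to $[0,1]^n$ with value $0$ on $\Gamma$, which matches $v|_\Gamma = 0$. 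Hence it suffices to bound the Lipschitz seminorm of this product on $(0,1]^n$.

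First I would record the two ingredients. (a) The map $g(x) := x_1\cdots x_n$ is bounded by $1$ on $[0,1]^n$ and Lipschitz there with constant at most $\sqrt{n}$, since each partial derivative $\prod_{i\neq j} x_i$ lies in $[0,1]$. (b) By Lemma~\ref{lem:wlip}, $w \in C^{0,1}([0,1]^n)$ with $[w]_{1;[0,1]^n} \le \sqrt{n}\,[f^\frac{1}{n}]_{1;[0,1]^n}$, and $\|w\|_{L^\infty([0,1]^n)} \le M$. Using the elementary inequality $|a^n - b^n| \le n\max(a,b)^{n-1}|a-b|$ for $a,b \ge 0$, it follows that $h := w^n$ satisfies $\|h\|_{L^\infty([0,1]^n)} \le M^n$ and $[h]_{1;[0,1]^n} \le n M^{n-1}[w]_{1;[0,1]^n} \le n^\frac{3}{2} M^{n-1}[f^\frac{1}{n}]_{1;[0,1]^n}$.

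Next I would apply the product rule for Lipschitz functions, $[gh]_1 \le \|g\|_{L^\infty}[h]_1 + \|h\|_{L^\infty}[g]_1$, to obtain
\[
[v]_{1;(0,1]^n} \;\le\; 1\cdot n^\frac{3}{2} M^{n-1}[f^\frac{1}{n}]_{1;[0,1]^n} + M^n \cdot \sqrt{n} \;\le\; n^\frac{3}{2} M^{n-1}\big( [f^\frac{1}{n}]_{1;[0,1]^n} + M \big).
\]
Since $M^{n-1} = \|f^\frac{n-1}{n}\|_{L^\infty([0,1]^n)}$ and $M + [f^\frac{1}{n}]_{1;[0,1]^n} = \|f^\frac{1}{n}\|_{C^{0,1}([0,1]^n)}$, this is exactly the claimed bound \eqref{eq:vholder} with $C = n^\frac{3}{2}$. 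Finally, since $v$ is continuous on $[0,1]^n$ and Lipschitz on the dense subset $(0,1]^n$, it is Lipschitz on the whole closed cube with the same constant, so $v \in C^{0,1}([0,1]^n)$.

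There is no deep obstacle in this argument: the lemma is essentially a corollary of Lemma~\ref{lem:wlip} once the identity $v = (x_1\cdots x_n)w^n$ is available. The only points that need a word of care are (i) checking that this identity, a priori valid only on $(0,1]^n$, is consistent with the boundary condition $v = 0$ on $\Gamma$ — which follows from the boundedness of $w$ — and (ii) upgrading Lipschitz continuity on $(0,1]^n$ together with continuity up to $\Gamma$ to Lipschitz continuity on $[0,1]^n$; both are routine. It is also worth noting where the hypothesis is used: the estimate genuinely requires $f^\frac{1}{n} \in C^{0,1}$ rather than merely $f \in C^{0,1}$, in line with Remark~\ref{rem:tight}.
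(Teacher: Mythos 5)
Your proposal is correct and follows essentially the same route as the paper: write $v = (x_1\cdots x_n)\,w(x)^n$, apply the Lipschitz product rule together with the bound $[w^n]_{1} \le n\|w\|_{L^\infty}^{n-1}[w]_{1}$, and invoke Lemma~\ref{lem:wlip}. The extra care you take with the boundary values on $\Gamma$ and the explicit constant $C = n^{3/2}$ are fine but not a different argument.
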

\begin{proof}
Notice that we can write
\[v(x) = x_1\cdots x_n w(x)^n, \ \ \ x \in [0,1]^n,\]
where $w$ is the maximal bounded viscosity solution of \eqref{P3}. Therefore
\[[v]_{1;[0,1]^n} \leq [w^n]_{1;[0,1]^n} + [x_1\cdots x_n]_{1;[0,1]^n}\sup_{[0,1]^n} f.\]
The estimate \eqref{eq:vholder} follows from Lemma \ref{lem:wlip} and the inequality
\[[w^n]_{1;[0,1]^n} \leq n\|w\|^{n-1}_{L^\infty([0,1]^n)}[w]_{1;[0,1]^n}.\qedhere\]
\end{proof}
We expect Lemma \ref{lem:vexun} to hold under the weaker condition that $f \in C^{0,1}([0,1]^n)$, but we do not currently know how to prove this.

\subsubsection{Convergence rate}
\label{sec:ratesub}

Since we know the scheme \eqref{S3} converges, we can prove a result analogous to Proposition \ref{prop:extension} regarding extensions of viscosity solutions of \eqref{P3}.
\begin{proposition}\label{prop:extension2}
Suppose $f \in C([0,1]^n)$ is nonnegative and let $w$ be the maximal bounded viscosity solution of \eqref{P3}. Then $\bar{w}(x) = w(x_+)$ is a viscosity solution of
\begin{equation}\label{eq:extension2}
H(x_+,\bar{w},D\bar{w}) =  \bar{f}\ \ \text{ on } (-\infty,1]^n,
\end{equation}
where $\bar{f}(x) = f(x_+)$.
\end{proposition}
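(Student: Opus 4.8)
The plan is to verify the viscosity sub- and supersolution properties of $\bar{w}(x)=w(x_+)$ directly from the definition, splitting into the case where the test point lies in the interior $(0,1]^n$ (where $x_+ = x$ and the claim is essentially vacuous) and the case where the test point has some coordinates that are zero or negative. For a test point $y \in (-\infty,1]^n$, partition the index set into $A = \{i : y_i > 0\}$ and $B = \{i : y_i \le 0\}$; then $y_+$ agrees with $y$ on coordinates in $A$ and equals $0$ on coordinates in $B$, and crucially $(y_i)_+ = 0$ for $i \in B$, so the term $n(y_i)_+ p_i$ in $H(x_+,z,p)$ drops out for those indices. Thus $H(y_+,\bar w(y),p)$ depends on $p$ only through the coordinates $p_i$ with $i \in A$.

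First I would handle the supersolution inequality. Let $\phi \in C^1$ with $\bar w - \phi$ attaining a local minimum at $y$, normalized so $\phi(y) = \bar w(y)$. The key observation is that for $i \in B$ the function $t \mapsto \bar w(y + t e_i)$ is locally constant near $t = 0$ (since the $i$-th coordinate stays $\le 0$ and $\bar w$ only sees $x_+$), so the minimum condition forces $\phi_{x_i}(y) = 0$ for $i \in B$; for $i \in A$, the map $x \mapsto \bar w$ agrees with $w$ composed with the projection, which near $y$ is just a translation in the positive orthant, so $w - \tilde\phi$ has a local minimum at $y_+$ for the restricted test function $\tilde\phi$. One then applies the supersolution property of $w$ for \eqref{P3} at $y_+ \in (0,1]^n$ (or uses that $w$ extends to the boundary via Theorem \ref{thm:convS3} / the structure established in Section \ref{sec:P3}), and reassembles: since the $i \in B$ terms in $H(y_+, \bar w(y), D\phi(y))$ contribute exactly $\bar w(y) = w(y_+)$ (as $(y_i)_+ = 0$), the product over all $i$ equals the product over $i \in A$ of $(w(y_+) + n(y_+)_i \tilde\phi_{x_i})_+$ times $\prod_{i \in B}(w(y_+))_+$, which is bounded below by $f(y_+) = \bar f(y)$ using the supersolution inequality for $w$ together with the fact that $w \ge \inf f^{1/n} > 0$ so the extra factors $w(y_+)$ are $\ge$ the corresponding factors $(w(y_+) + n(y_+)_i w_{x_i})_+$ only after — actually here one should be careful and instead use that on $\Gamma \cap \{x_i = 0\}$ the term $nx_i w_{x_i}$ vanishes, which is exactly the boundary-encoding phenomenon noted after \eqref{P3}. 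The subsolution inequality is handled symmetrically, using $\phi_{x_i}(y) \le 0$ is not needed — again $\phi_{x_i}(y)$ is unconstrained for $i \in B$ but those coordinates are killed by the $(y_i)_+ = 0$ factor, so only the $i \in A$ part matters and reduces to the subsolution property of $w$ at $y_+$.

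Cleaner still, and probably the route I would actually take: observe that by Theorem \ref{thm:convS3} the extended numerical solutions $\bar w_h(x) = w_h(x_+)$ (which by Proposition \ref{prop:extension} solve \eqref{eq:extension} on $(-\infty,1]^n_h$) converge locally uniformly on $(-\infty,1]^n$, with limit $\bar w(x) = w(x_+)$ by continuity of $w$ up to the boundary. Since the extended scheme \eqref{eq:extension} is monotone (the Hamiltonian $H(x_+,z,p)$ is still directed in $p$ — nondecreasing, as the $n(x_i)_+ p_i$ coefficients are nonnegative — and the zeroth-order dependence is fine), Theorem \ref{thm:bs-conv} applied with $s \to -\infty$ (or on each box $[-R,1]^n$) yields that $\bar w$ is both a viscosity subsolution and supersolution of \eqref{eq:extension2} on $(-\infty,1]^n$. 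The only points needing care are (a) confirming the hypotheses of Theorem \ref{thm:bs-conv} survive the extension — uniform boundedness holds since $\inf f^{1/n} \le w_h \le \sup f^{1/n}$ is preserved by the projection, and directedness of $H(x_+,\cdot,\cdot)$ is immediate — and (b) identifying the half-relaxed limits $\limsup^*\bar w_h$ and $\liminf_*\bar w_h$ with $\bar w$ itself, which follows from the genuine uniform convergence $w_h \to w$ on $[0,1]^n$ (Theorem \ref{thm:convS3}) and continuity of $x \mapsto w(x_+)$. The main obstacle is thus purely bookkeeping: making sure the Barles--Souganidis machinery is invoked on an appropriate exhausting family of compact domains $[-R,1]^n$ so that "half-relaxed limit equals the pointwise limit" is legitimate, and that the directedness hypothesis of Theorem \ref{thm:bs-conv} is genuinely inherited by the truncated Hamiltonian $(x,z,p) \mapsto H(x_+,z,p)$.
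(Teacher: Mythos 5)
Your second ("cleaner") route is exactly the paper's proof: extend the discrete solutions via Proposition \ref{prop:extension}, note that $\bar{w}_h \to \bar{w}$ uniformly by Theorem \ref{thm:convS3} and continuity of $x\mapsto x_+$, and apply Theorem \ref{thm:bs-conv} to the still-directed Hamiltonian $(x,z,p)\mapsto H(x_+,z,p)$. The direct test-function verification you sketch first gets tangled (as you noticed), but since you correctly identify and justify the Barles--Souganidis argument as the one to use, the proposal is correct and matches the paper.
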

\begin{proof}
Let $w_h:[0,1]^n_h \to\R_+$ denote the solution of \eqref{S3} for $h>0$. By Theorem \ref{thm:convS3}, $w_h \to w$ uniformly on $[0,1]^n$. Set $\bar{w}_h(x) = w_h(x_+)$ for $x \in (-\infty,1]^n_h$. By Proposition \ref{prop:extension}, $\bar{w}_h$ is a solution of the discrete scheme    
\[H(x_+,\bar{w}_h(x),D^-\bar{w}_h(x))  = \bar{f}(x) \ \ \text{for all } x \in (-\infty,1]^n_h.\]
Since $\bar{w}_h \to \bar{w}$ uniformly on $(-\infty,1]^n$, Theorem \ref{thm:bs-conv} shows that $\bar{w}$ is a viscosity solution of \eqref{eq:extension2}.
\end{proof}

Before proving a convergence rate for \eqref{S3}, we need another preliminary proposition.
\begin{proposition}\label{prop:H1}
For $(x_0,z_0,p_0) \in \R^n\times \R \times\R^n$ such that $H(x_0,z_0,p_0)>0$ 
\begin{equation}\label{eq:slope}
\frac{\partial H}{\partial z} (x_0,z_0,p_0) \geq nH(x_0,z_0,p_0)^\frac{n-1}{n}.
\end{equation}
\end{proposition}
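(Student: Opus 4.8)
The plan is to exploit the explicit product structure of $H$. Since $H(x_0,z_0,p_0) = \prod_{i=1}^n (z_0 + nx_{0,i}p_{0,i})_+ > 0$, every factor must be strictly positive; write $a_i := z_0 + nx_{0,i}p_{0,i} > 0$. In a neighborhood of $(x_0,z_0,p_0)$ each of the $n$ expressions $z + nx_ip_i$ stays positive, so the positive-part operations are inactive there and $H(x,z,p) = \prod_{i=1}^n (z + nx_ip_i)$ is an honest polynomial, hence smooth, near the point. This is the only place where the hypothesis $H>0$ is used, and it is what makes $\partial H/\partial z$ well-defined and computable by the ordinary product rule.

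First I would differentiate in $z$: since $\partial(z + nx_ip_i)/\partial z = 1$ for each $i$, the product rule gives
\[\frac{\partial H}{\partial z}(x_0,z_0,p_0) = \sum_{j=1}^n \prod_{i\neq j} a_i.\]
Then I would apply the inequality of arithmetic and geometric means to the $n$ nonnegative numbers $\prod_{i\neq 1} a_i,\dots,\prod_{i\neq n} a_i$, which yields
\[\sum_{j=1}^n \prod_{i\neq j} a_i \geq n\left(\prod_{j=1}^n \prod_{i\neq j} a_i\right)^{\frac1n}.\]
Finally I would simplify the right-hand side: each $a_i$ occurs in exactly $n-1$ of the inner products $\prod_{i\neq j} a_i$, so $\prod_{j=1}^n \prod_{i\neq j} a_i = \left(\prod_{i=1}^n a_i\right)^{n-1} = H(x_0,z_0,p_0)^{n-1}$, and substituting gives the claimed bound $\partial H/\partial z \geq n\,H^{(n-1)/n}$.

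There is no real obstacle here; the computation is the same AM-GM step already used in the proof of Theorem \ref{thm:lip} (cf.\ \eqref{eq:dk3}), transplanted to the continuum Hamiltonian. The only point that deserves to be stated with care is that the positivity hypothesis $H>0$ is precisely what licenses removing the $(\cdot)_+$ and differentiating; at points where some factor vanishes $H$ need not be differentiable in $z$, so the proposition is naturally restricted to the region $\{H>0\}$, which is exactly where it will be applied in the convergence-rate argument.
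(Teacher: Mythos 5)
Your proposal is correct and follows essentially the same route as the paper: use $H>0$ to drop the positive parts near the point, differentiate the product in $z$, and apply AM--GM to the $n$ partial products $\prod_{i\neq j}a_i$ (the paper writes this as $H\sum_i a_i^{-1}\geq nH(\prod_i a_i^{-1})^{1/n}$, which is the identical computation). No gaps.
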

\begin{proof}
Since $H(x_0,z_0,p_0)>0$ we can write
\[H(x,z,p) = \prod_{i=1}^n (z + nx_i p_i),\]
in a sufficiently small neighborhood of $(x_0,z_0,p_0)$. In this neighborhood
\begin{align*}
\frac{\partial}{\partial z} H(x,z,p) &= H(x,z,p)\sum_{i=1}^n \frac{1}{z + nx_ip_i} \\
&\geq nH(x,z,p)\left( \prod_{i=1}^n \frac{1}{z + nx_ip_i}\right)^\frac{1}{n} \\
&= nH(x,z,p)^\frac{n-1}{n},
\end{align*}
where we used the inequality of arithmetic and geometric means. 
\end{proof}

We now give the proof of Theorem \ref{thm:rate1}. 
\begin{proof}
We extend $f$ to a function on $(-\infty,1]^n$ by setting $f(x) = f(x_+)$.  
By Proposition \ref{prop:extension} we can extend $w_h$ to a function on $(-\infty,1]^n_h$ satisfying 
\[H(x_+,w_h(x),D^-w_h(x))= f(x) \ \ \text{for all } x \in (-\infty,1]^n_h,\]
by setting $w_h(x) = w_h(x_+)$. Similarly, by Proposition \ref{prop:extension2} we can extend $w$ to a function on $(-\infty,1]^n$ by setting $w(x) = w(x_+)$, and $w$ is a viscosity solution of
\[H(x_+,w,Dw) = f \ \ \text{ in } (-\infty,1]^n.\]

We will show that
\begin{equation}\label{eq:halfrate}
w_h - w \leq C\sqrt{h}.
\end{equation}
The opposite inequality is proved similarly.
We can assume that $\sup_{[0,1]^n_h} (w_h - w)>0$. For $\alpha>0$, $x \in (-\infty,1]^n_h$ and $y \in (-\infty,1]^n$, set 
\[\Phi(x,y) = w_h(x) - w(y) - \frac{\alpha}{2}|x-y|^2.\]
Since $w_h(x) = w_h(x_+)$, $w(y) = w(y_+)$ and $|x_+ - y_+| \leq |x-y|$, there exists $x_\alpha \in [0,1]^n_h$ and $y_\alpha \in [0,1]^n$ such that 
\begin{equation}\label{eq:maxcond}
\Phi(x_\alpha,y_\alpha) = \max_{(-\infty,1]^n_h \times (-\infty,1]^n} \Phi.
\end{equation}

Let $\phi(x) = \frac{\alpha}{2}|x-y_\alpha|$. Then $w_h - \phi$ attains its maximum at $x_\alpha$ with respect to the grid $(-\infty,1]^n_h$. Therefore $D^-_iw_h(x_\alpha) \geq D^-_i\phi(x_\alpha)$ for all $i$. Since $x_\alpha \in [0,1]^n_h$ and $H$ is directed, 
\begin{equation}\label{eq:monotonicity}
H(x_\alpha,w_h(x_\alpha),D^-\phi(x_\alpha)) \leq H(x_\alpha,w_h(x_\alpha),D^-w_h(x_\alpha)) = f(x_\alpha).
\end{equation}
Since $\Phi(x_\alpha,x_\alpha) \leq \Phi(x_\alpha,y_\alpha)$, we see that
\begin{equation}\label{eq:pbound}
\frac{\alpha}{2}|x_\alpha - y_\alpha|^2 \leq w(x_\alpha) - w(y_\alpha) \leq C|x_\alpha - y_\alpha|,
\end{equation}
where we invoked Remark \ref{rem:wlip} in the last inequality. Since
\[D^- \phi(x_\alpha) = \alpha(x_\alpha - y_\alpha) - \frac{\alpha h}{2}\vb{1},\]
we have
\[|D^- \phi(x_\alpha) - p_\alpha| \leq C\alpha h,\]
where $p_\alpha = \alpha(x_\alpha - y_\alpha)$. By \eqref{eq:pbound}, $|p_\alpha| \leq C$. Therefore, we can invoke the local Lipschitzness of $H$ in all variables to obtain
\[|H(x_\alpha,w_h(x_\alpha),D^-\phi(x_\alpha)) - H(y_\alpha,w_h(x_\alpha),p_\alpha)| \leq C\left( \alpha h + \frac{1}{\alpha}\right).\]
Combining this with \eqref{eq:monotonicity} yields
\[H(y_\alpha,w_h(x_\alpha),p_\alpha) \leq f(y_\alpha) + C\left(\alpha h + \frac{1}{\alpha}\right).\]
By \eqref{eq:maxcond} we also have
\[H(y_\alpha,w(y_\alpha),p_\alpha) \geq f(y_\alpha).\]
Subtracting these equations yields
\[H(y_\alpha,w_h(x_\alpha),p_\alpha)-H(y_\alpha,w(y_\alpha),p_\alpha) \leq C\left(\alpha h + \frac{1}{\alpha}\right).\]
Since 
\[w_h(x_\alpha) - w(y_\alpha) \geq \Phi(x_\alpha,y_\alpha) \geq \sup_{[0,1]^n_h} (w_h - w)>0,\]
we have $w_h(x_\alpha) > w(y_\alpha)$.  Since $f(y_\alpha)>0$, we can invoke Proposition \ref{prop:H1} to find that 
\[nf(y_\alpha)^\frac{n-1}{n} (w_h(x_\alpha) - w(y_\alpha)) \leq H(y_\alpha,w_h(x_\alpha),p_\alpha)-H(y_\alpha,w(y_\alpha),p_\alpha) \leq  C\left(\alpha h + \frac{1}{\alpha}\right).\]
Choosing $\alpha = \frac{1}{\sqrt{h}}$ we have
\[\sup_{[0,1]^n_h} (w_h - w) \leq w_h(x_\alpha) - w(y_\alpha) \leq C\sqrt{h}. \qedhere\]
\end{proof}

\subsection{The scheme (S2)}
\label{sec:ratesS2}

We now prove a convergence rate for \eqref{S2}. This follows directly from Theorem \ref{thm:rate1} and the following result.
\begin{lemma}\label{lem:changeofvar}
Suppose $f \in C^{0,1}([0,1]^n)$ is positive. Let $w_h$ be the solution of \eqref{S3}, and let $v_h$ be the maximal solution of \eqref{S2}. Then 
\begin{equation}\label{eq:changeofvar}
|x_1\cdots x_n w_h(x)^n - v_h(x)| \leq Ch  \ \ \text{ for all } x \in [0,1]^n_h,
\end{equation}
where
\[C = C\Big(n,[f]_{1;[0,1]^n}, \inf_{[0,1]^n}f\Big).\]
\end{lemma}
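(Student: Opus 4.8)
The plan is to relate the two schemes via the substitution that motivated \eqref{P3} in the first place: if $w_h$ solves \eqref{S3} then $g_h(x) := x_1\cdots x_n\, w_h(x)^n$ should be an approximate solution of \eqref{S2}, and then we invoke the comparison principle for \eqref{S2} (Theorem \ref{thm:Svcomp}, together with Remark \ref{rem:poscomp}) to sandwich $v_h$ between $g_h$ and a perturbation of it. The first step is to compute $D^-_i g_h(x)$ in terms of $D^-_i w_h(x)$. Using the product rule Proposition \ref{prop:finite-diff-properties} (ii)--(iii) on the factors $x_1\cdots x_n$ and $w_h^n$, and using that $t\mapsto t^n$ has $D^-_i(w_h^n)(x) = w_h(x)^{n-1}D^-_iw_h(x) + O(h)\cdot(\text{lower order})$ when $w_h$ is bounded with bounded difference quotients, one expects an identity roughly of the form
\[
D^-_i g_h(x) = \frac{x_1\cdots x_n}{x_i}\big(w_h(x)^{n-1}(w_h(x) + nx_i D^-_iw_h(x)) \big) + h\,E_i(x),
\]
where $E_i$ is controlled by $n$, $\sup|w_h|$, and $\sup|D^-_i w_h|$, all of which are bounded independently of $h$ thanks to Theorem \ref{thm:lip} and the two-sided bound $\inf f^{1/n}\le w_h\le \sup f^{1/n}$ from the proof of Theorem \ref{thm:convS3}. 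Multiplying these over $i=1,\dots,n$ and using $\prod_i (w_h + nx_iD^-_iw_h) = f$ (which holds since $w_h>0$, so the positive parts are inactive) should give
\[
\prod_{i=1}^n (D^-_i g_h(x))_+ = (x_1\cdots x_n)^{n-1} w_h(x)^{n(n-1)} f(x) + O(h) = g_h(x)^{n-1} f(x) + O(h),
\]
i.e. $g_h$ satisfies \eqref{S2} up to an additive error $O(h)$, with the constant depending only on $n$, $[f]_{1;[0,1]^n}$ and $\inf f$ (the Lipschitz bound on $w_h$ enters through Theorem \ref{thm:lip}, and positivity of $f$ is needed to keep all the $a_i(x) = w_h(x) + nx_iD^-_iw_h(x)$ bounded below).

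The second step is to absorb the $O(h)$ error into a comparison argument. Since $f\in C^{0,1}$ and $f>0$, replacing $f$ by $f \pm C h/\inf(x_1\cdots x_n \cdots)$ is awkward near $\Gamma$; instead I would perturb $g_h$ additively. The cleaner route: let $\bar g_h := g_h + C'h\,(x_1+\cdots+x_n)$ and check, using the directedness/monotonicity of the scheme operator $S$ (Lemma \ref{lem:scheme}) and the explicit form of the error, that $\bar g_h$ is a supersolution of \eqref{S2} in the sense of \eqref{eq:Ssuper}, while $g_h - C'h(x_1+\cdots+x_n)$ is a subsolution; then Theorem \ref{thm:Svcomp} gives $g_h - C'h(x_1+\cdots+x_n) \le v_h \le g_h + C'h(x_1+\cdots+x_n)$ on $[0,1]^n_h$, which is exactly \eqref{eq:changeofvar}. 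One has to be slightly careful that the additive perturbation actually changes the value of $\prod_i (D^-_i\cdot)_+$ in the right direction by at least the error term; here the zeroth-order structure helps — bumping $g_h$ up by a strictly increasing function increases each $D^-_i$ by a positive amount bounded below, so the product increases, and comparing against $g_h(x)^{n-1}f(x)$ one gets the needed slack once $C'$ is large enough relative to the $O(h)$ constant. Boundary values match since both sides vanish on $\Gamma_h$ (as $x_1\cdots x_n = 0$ there).

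The main obstacle I expect is the bookkeeping in the first step: expanding $D^-_i\big(x_1\cdots x_n\, w_h^n\big)$ via the finite-difference product rule (Lemma \ref{lem:prod-rule} applied to the $n+1$ factors, or iterated Proposition \ref{prop:finite-diff-properties}) and then taking the product over $i$ produces many cross terms, and one must verify that every term not equal to $g_h(x)^{n-1}f(x)$ carries at least one factor of $h$ with an $h$-independent coefficient. The key facts that make this work are (i) $w_h$ is uniformly bounded above and below by positive constants, (ii) $D^\pm_i w_h$ is uniformly bounded by Theorem \ref{thm:lip}, and (iii) the factor $x_i$ in $nx_i D^-_iw_h$ cancels the $1/x_i$ coming from differentiating the monomial $x_1\cdots x_n$, so no negative powers of $x_i$ survive and the estimate is uniform up to $\Gamma$. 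A secondary subtlety is handling grid points with some $x_i = 0$, where $D^-_i w_h$ is undefined; but there $g_h(x) = 0$, $D^-_j g_h(x) = 0$ for the vanishing coordinate directions (by the extension $w_h(x)=w_h(x_+)$ from Proposition \ref{prop:extension}, or directly since $g_h$ vanishes on the relevant face), and the inequality is trivial, matching the behavior of $v_h$ on $\Gamma_h$. I would do the clean verification in $n=2$ first using the closed forms \eqref{eq:S2closed} and \eqref{eq:S3closed} as a sanity check, then present the general case via the product-rule expansion.
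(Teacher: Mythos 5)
Your Step 1 is exactly the paper's first step: expand $D^-_i\bigl(x_1\cdots x_n w_h^n\bigr)$ via the discrete product rule, use the uniform bounds $0<\inf f^{1/n}\le w_h$ and $\sup|D^\pm w_h|\le C$ from Theorem \ref{thm:lip}, note that the factor $x_1\cdots x_n/x_i$ cancels correctly, and conclude that $g_h=x_1\cdots x_n w_h^n$ satisfies \eqref{S2} up to an error which (if you track the prefactor $\tfrac{x_1\cdots x_n}{x_i}w_h^{n-1}$ on the error term as well, not just on the main term) is of the form $g_h^{n-1}\cdot O(h)$, i.e.\ $\prod_i(D^-_ig_h)_+=g_h^{n-1}\bigl(f+O(h)\bigr)$. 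That multiplicative form of the error is important for what follows.

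The gap is in Step 2: the additive perturbation $\bar g_h=g_h+C'h(x_1+\cdots+x_n)$ is \emph{not} in general a supersolution of \eqref{S2}, no matter how large $C'$ is. The supersolution inequality requires the increase in $\prod_i(D^-_i\cdot)_+$ to dominate the increase in the zeroth-order term $(\cdot)^{n-1}f$; in dimension $n=2$ this comes down to $C'h\,(D^-_1g_h+D^-_2g_h)+ (C'h)^2 \ge C'h\,(x_1+x_2)f + O(h)g_h$, and the inequality $D^-_1g_h+D^-_2g_h\ge(x_1+x_2)f$ can fail by a fixed amount $\delta>0$ at interior points where $g_h$ is bounded below (take $w=1+\eps(x_1+x_2)$, so $f=(1+\eps(3x_1+x_2))(1+\eps(x_1+3x_2))$; at $x=(\tfrac12,\tfrac12)$ one finds $\partial_1g+\partial_2g=(1+\eps)(1+2\eps)<(1+2\eps)^2=(x_1+x_2)f$). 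Then one would need $C'(C'h-\delta)\ge c>0$, forcing $C'\gtrsim \delta/h$ and destroying the $O(h)$ conclusion. The perturbation must respect the homogeneity of the equation: the paper instead uses a \emph{multiplicative} perturbation, $\bar v_h=(1+\alpha)v_h$ and $\underline v_h=(1-\alpha)v_h$ with $\alpha=Ch/\inf f$, for which $\prod_i(D^-_i\bar v_h)_+=\bar v_h^{\,n-1}(1+\alpha)f\ge\bar v_h^{\,n-1}(f+Ch)$ uniformly, so that $\bar v_h$ is a supersolution of the scheme with right-hand side $f+Ch$ (of which $g_h$ is a subsolution) and $\underline v_h$ is a subsolution of the scheme with right-hand side $f-Ch$; Theorem \ref{thm:Svcomp} and Remark \ref{rem:poscomp} then give $v_h-Ch\le g_h\le v_h+Ch$ since $v_h\le\sup f$. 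Replacing your additive bump by this multiplicative one (applied to either $g_h$ or $v_h$) closes the argument; the rest of your outline, including the boundary discussion, is fine.
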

\begin{proof}
Set $v(x) = x_1\cdots x_n w_h(x)^n$ for $x \in [0,1]^n_h$. By Lemma \ref{lem:prod-rule}, Remark \ref{rem:wlip}, and the inequality $w_h \geq \inf f^\frac{1}{n}>0$, we have
\[D^-_iv(x) = \frac{x_1\cdots x_n}{x_i} w_h(x)^{n-1}\left( w_h(x) + nx_i D^-_iw_h(x) + O(h)\right).\]
Here, we use the notation $O(h)$ to denote function of $x$ that is bounded uniformly by $Ch$ with
\[C = C\Big(n,[f]_{1;[0,1]^n},\inf_{[0,1]^n}f\Big).\]
Since $w_h$ satisfies \eqref{S3} we have
\[\prod_{i=1}^n \left( w_h(x) + nx_i D^-_iw_h(x) + O(h)\right)_+ = f(x) + O(h).\]
It follows that
\[(D^-_1v(x))_+ \cdots (D^-_nv(x))_+ = v(x)^{n-1}\left(f(x) + O(h)\right).\]

Therefore, there exists a constant $C$ such that
\[(D^-_1v(x))_+ \cdots (D^-_nv(x))_+ \leq v(x)^{n-1}\left(f(x) + Ch\right).\]
Let $\alpha = Ch/\inf f$ and set $\bar{v}_h = (1+\alpha)v_h$.  Then 
\[(D^-_1\bar{v}_h(x))_+ \cdots (D^-_n\bar{v}_h(x))_+  = \bar{v}_h(x)^{n-1}(1+\alpha)f(x) \geq \bar{v}_h(x)^{n-1}(f(x) + Ch).\]
Since $v_h>0$ and $f>0$, we can use Theorem \ref{thm:Svcomp} and Remark \ref{rem:poscomp} to conclude that $v \leq \bar{v}_h\leq v_h + Ch$ on $[0,1]^n_h$.

Likewise, there exists a constant $C$ such that
\[(D^-_1v(x))_+ \cdots (D^-v_n(x))_+ \geq v(x)^{n-1}\left(f(x) - Ch\right).\]
Let $\alpha= Ch/\inf f$, and take $h>0$ small enough so that $\alpha < 1$. Define $\underline{v}_h = (1-\alpha)v_h$. Then 
\[(D^-_1\underline{v}_h(x))_+ \cdots (D^-_n\underline{v}_h(x))_+  = \underline{v}_h(x)^{n-1}(1-\alpha)f(x) \leq \bar{v}_h(x)^{n-1}(f(x) - Ch).\]
Since $v>0$ on $(0,1]^n_h$ we have $v \geq \underline{v}_h \geq v_h - Ch$ on $[0,1]^n_h$. 
\end{proof}
We now have the proof of Theorem \ref{thm:ratev}.
\begin{proof}
Let $w_h$ be the solution of \eqref{S3} and let $w$ be the maximal bounded viscosity solution of \eqref{P3}. By Theorem \ref{thm:rate1}, 
\[|w_h(x) - w(x)| \leq C\sqrt{h} \ \ \text{ for all } x \in [0,1]^n_h.\]
By Lemma \ref{lem:changeofvar} we have
\[|v_h(x) - v(x)| \leq Ch + x_1\cdots x_n|w_h(x)^n - w(x)^n| \leq C\sqrt{h},\]
for all $x \in [0,1]^n_h$ and $h>0$ sufficiently small.  The inequality \eqref{eq:ratev2} follows from the lower bounds
 \[v_h(x),v(x) \geq \delta^n \inf_{[0,1]^n} f \ \text{ for all } x \in [\delta,1]^n_h.\qedhere\]
\end{proof}

\section{Numerical experiments}
\label{sec:numerics}

In this section, we discuss the bisection search method for solving \eqref{S1}--\eqref{S3}, and we show the results of numerical simulations comparing the three schemes. 

\subsection{The bisection search}
\label{sec:bisection}

In dimension $n=2$, all three schemes have a closed form solution (see \eqref{eq:S1closed}, \eqref{eq:S2closed}, and \eqref{eq:S3closed}), and can therefore be efficiently solved up to machine precision. In dimensions $n\geq 3$, it is necessary to use an iterative method to approximate the solution of the scheme. Due to the monotonicity properties of all three schemes, it is natural and efficient to use the  bisection method, or slight variations thereof. The bisection method requires an initial interval to start the search, and a specified tolerance for terminating the bisections. The main idea is to set the tolerance to match the truncation error of each scheme, so that the error from the bisection method is no greater than the error incurred by using finite differences. We describe the details for each scheme below.  

For the scheme (S1), we use the bisection search method at each $x \in (0,1]^n_h$ to construct an approximation $\tilde{u}_h$ of the solution $u_h$ of \eqref{S1}. For the initial interval, we use
\[ I_1= [\max\{\tilde{u}_h(x-he_1),\dots,\tilde{u}_h(x-he_n)\},\max\{\tilde{u}_h(x-he_1),\dots,\tilde{u}_h(x-he_n)\}+ hf(x)^\frac{1}{n}],\]
and with a slight modification of the bisection method, we can ensure that the solution $\tilde{u}_h$ satisfies
\begin{equation}\label{eq:S1bisection}
f(x) \leq D^-_1\tilde{u}_h(x) \cdots D^-_n\tilde{u}_h(x) \leq (1+h)f(x) \ \ \text{ for all } x \in (0,1]^n_h,
\end{equation}
and $\tilde{u}_h(x) = 0$ for $x \in \Gamma_h$.
By comparison for \eqref{S1}  we have
\begin{equation}\label{eq:tildeuh}
u_h(x) \leq \tilde{u}_h(x) \leq (1+h)^\frac{1}{n} u_h(x) \leq u_h(x) +  (x_1\cdots x_n)^\frac{1}{n}(\sup f^\frac{1}{n})h \ \ \text{ for all } x \in (0,1]^n_h,
\end{equation}
where $u_h$ is the exact solution of \eqref{S1}. Since this error is no worse than the truncation error from using first order finite differences, it is unnecessary to continue the bisection search once \eqref{eq:S1bisection} is satisfied. 

For the scheme (S2), we use the same bisection search technique to find $\tilde{v}_h$ satisfying
\[\tilde{v}_h(x)^{n-1}f(x) \leq D^-_1\tilde{v}_h(x) \cdots D^-_n\tilde{v}_h(x) = \tilde{v}_h(x)^{n-1} (1 + h)f(x) \ \  \text{ for all } x \in (0,1]^n_h,\]
with $\tilde{v}_h(x) = 0$ for $x \in \Gamma_h$. By Remark \ref{rem:lipS}, we may take the initial interval to be 
\[ I_2= [\max\{\tilde{v}_h(x-he_1),\dots,\tilde{v}_h(x-he_n)\},\tilde{v}_h(x-he_1)+\cdots+\tilde{v}_h(x-he_n)+ h^nf(x)].\]
Letting $v_h$ denote the exact solution of \eqref{S3} we have by Theorem \ref{thm:Svcomp} and Remark \ref{rem:S2exact} that
\begin{equation}\label{eq:tildevh}
v_h(x) \leq \tilde{v}_h(x) \leq (1+h)v_h(x) \leq v_h(x) + (x_1\cdots x_n)\left(\sup f\right)h.
\end{equation}
As before, this error is smaller than the truncation error introduced by discretizing \eqref{P2}, which justifies our choice of stopping condition for the bisection search.

Finally, for the scheme \eqref{S3}, we use the bisection method to obtain $\tilde{w}_h$ satisfying
\[f(x) \leq \prod_{i=1}^n (\tilde{w}_h(x) + nx_i D^-_i\tilde{w}_h(x)) \leq (1+h)f(x) \ \ \text{ for all } x \in [0,1]^n_h.\]
We take the initial interval for the search to be
\[I_3 = \left[\sigma,\sigma + hf(x)^\frac{1}{n}\prod_{i=1}^n(nx_i + h)^{-\frac{1}{n}}\right],\]
where
\[\sigma = \max\left\{\frac{nx_1\tilde{w}_h(x-he_1)}{nx_1 + h},\dots, \frac{nx_n\tilde{w}_h(x-he_n)}{nx_n + h}\right\}.\]
By Lemma \ref{lem:S3comp} we have
\begin{equation}\label{eq:tildewh}
w_h(x) \leq \tilde{w}_h(x) \leq (1+h)^\frac{1}{n}w_h(x) \leq w_h(x) + \frac{1}{n}\left(\sup f^\frac{1}{n}\right)h,
\end{equation}
where $w_h$ is the exact solution of \eqref{S3}.

\subsection{Simulations}
\label{sec:sim}

\begin{figure}
\centering
\subfigure[$u_1$]{\includegraphics[clip=true,trim = 55 30 50 25, width=0.32\textwidth]{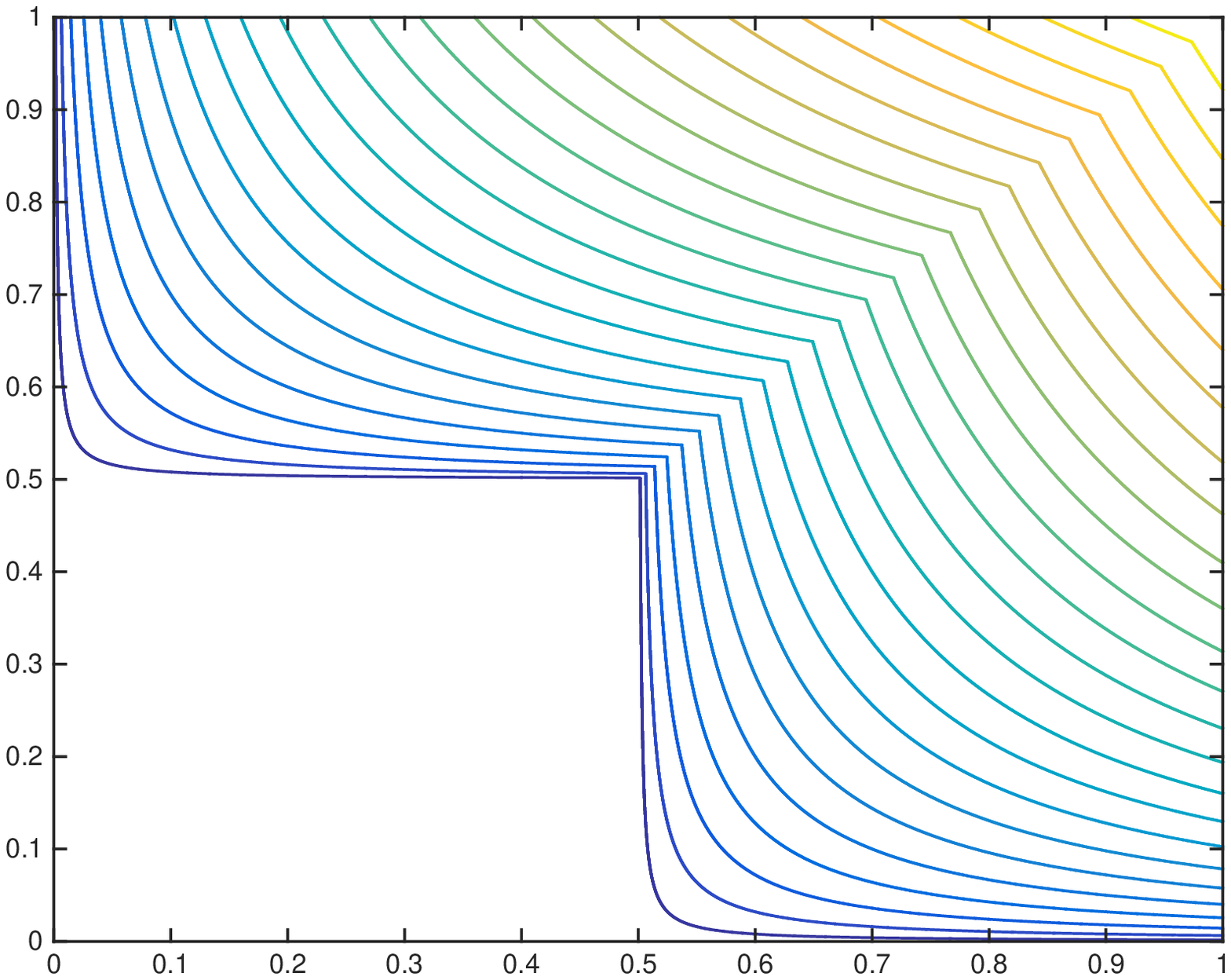}\label{fig:demo1}}
\subfigure[$u_2$]{\includegraphics[clip=true,trim = 55 30 50 25, width=0.32\textwidth]{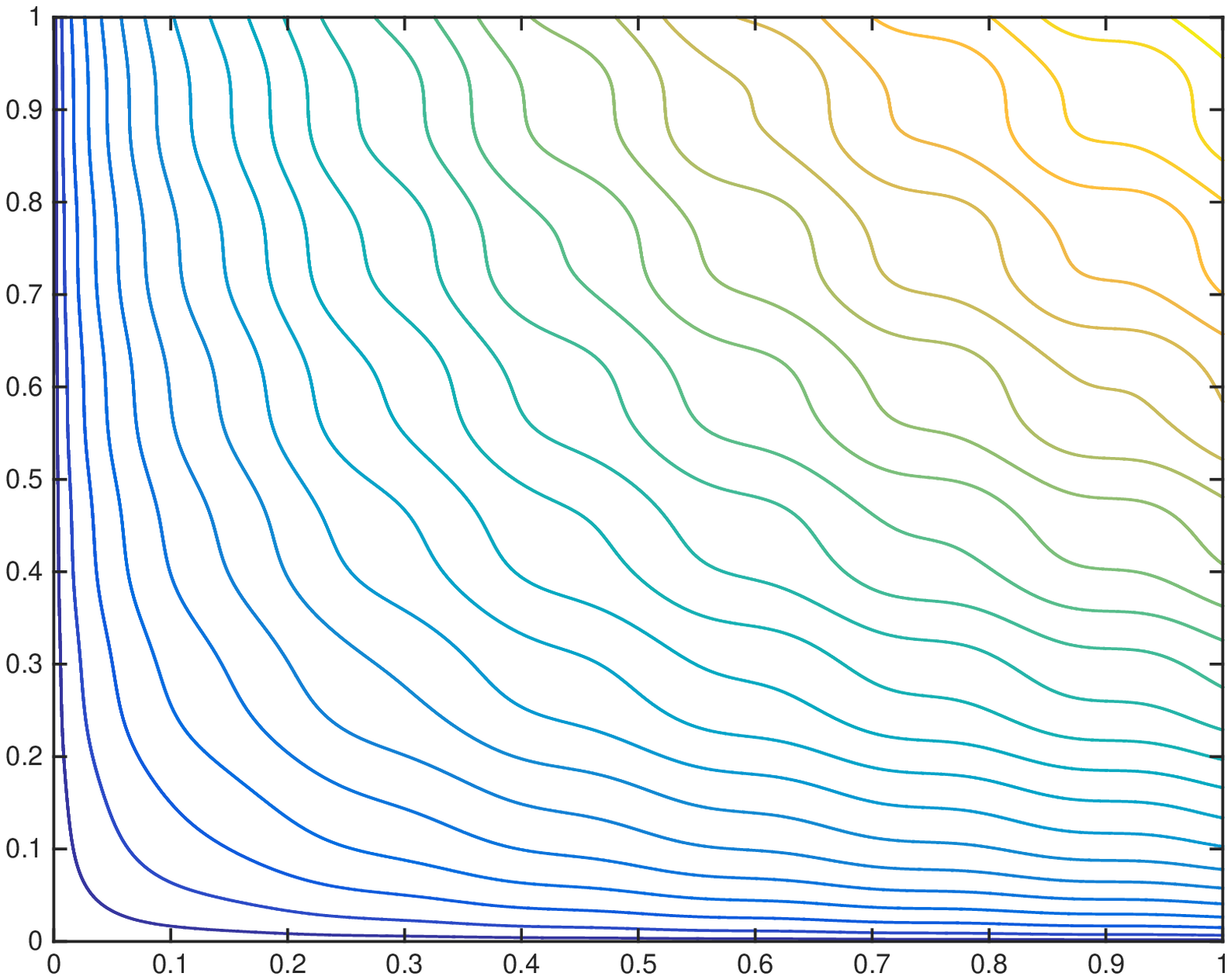}\label{fig:demo2}}
\subfigure[$u_3$]{\includegraphics[clip=true,trim = 55 30 50 25, width=0.32\textwidth]{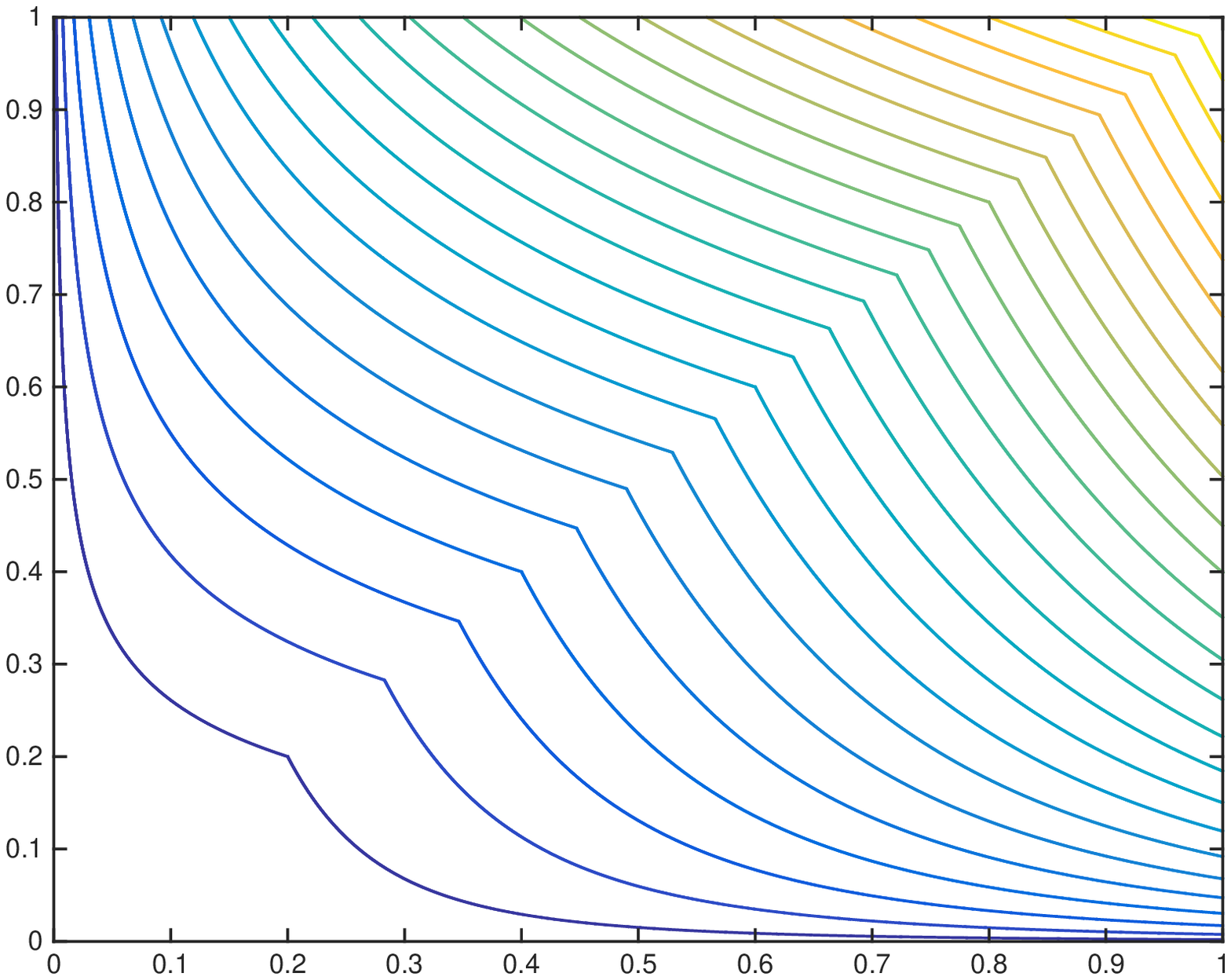}\label{fig:demo3}}
\caption{Depiction of the level sets of the viscosity solutions of \eqref{P1} in dimension $n=2$ for each of the three test cases.}
\label{fig:demou}
\end{figure}

We show here the results of some numerical simulations comparing the three schemes \eqref{S1}, \eqref{S2}, and \eqref{S3}. We first consider
\[f_1(x) = \begin{cases}
1,& \text{if } \max\{x_1,\dots,x_n\}>0.5\\
0,& \text{otherwise}.
\end{cases}\]
The solution of \eqref{P1} is given by
\[u_1(x) = n\max_{i \in \{1,\dots,n\}}\left\{\left(x_i - \frac{1}{2}\right)_+\prod_{j\neq i} x_j\right\}^\frac{1}{n}.\]
The level sets of $u_1$ are illustrated in Figure \ref{fig:demo1}.
Tables \ref{f1n2}, \ref{f1n3} and \ref{f1n4} show the $\ell^\infty$ errors and orders of convergence for $f_1$ in dimensions $n=2$, $n=3$, and $n=4$, respectively. For each scheme, the $\ell^\infty$ errors are measured by how well the schemes approximate the viscosity solution $u_1$ of \eqref{P1}. 
\begin{table}[!t]
\centering
\begin{tabular}{|c|c|c|c|c|c|c|}
 \hline
 &\multicolumn{2}{c|}{(S1)}&\multicolumn{2}{c|}{(S2)}&\multicolumn{2}{c|}{(S3)}\\
\hline
Mesh size $h$& $\ell^\infty$ Error & Order& $\ell^\infty$ Error & Order& $\ell^\infty$ Error & Order\\
\hline
$2.5\times10^{-2}$&$7.1\t{2}$ &         &$2.1\t{2}$ &        &$6.7\t{2}$ & \\
$6.3\times10^{-3}$&$3.4\t{2}$ &$0.54$   &$5.7\t{3}$ &$0.93$  &$3.3\t{2}$ &$0.51$\\
$1.6\times10^{-3}$&$1.6\t{2}$ &$0.51$   &$1.5\t{3}$ &$0.97$  &$1.6\t{2}$ &$0.50$\\
$3.9\times10^{-4}$&$8.2\t{3}$ &$0.50$   &$3.8\t{4}$ &$0.98$  &$8.2\t{3}$ &$0.50$\\
$9.8\times10^{-5}$&$4.1\t{3}$ &$0.50$   &$9.7\t{5}$ &$0.99$  &$4.1\t{3}$ &$0.50$\\
$2.4\times10^{-5}$&$2.0\t{3}$ &$0.50$   &$2.4\t{5}$ &$1.00$  &$2.0\t{3}$ &$0.50$\\
\hline
\end{tabular}
\caption{Rates of convergence for $f_1$ in dimension $n=2$.}
\label{f1n2}
\end{table}
\begin{table}[!t]
\centering
\begin{tabular}{|c|c|c|c|c|c|c|}
 \hline
 &\multicolumn{2}{c|}{(S1)}&\multicolumn{2}{c|}{(S2)}&\multicolumn{2}{c|}{(S3)}\\
\hline
Mesh size $h$& $\ell^\infty$ Error & Order& $\ell^\infty$ Error & Order& $\ell^\infty$ Error & Order\\
\hline
$5\times10^{-2}$&$3.1\t{1}$ &         &$9.1\t{2}$ &        &$2.1\t{1}$ & \\
$2.5\times10^{-2}$&$2.3\t{1}$ &$0.41$   &$5.3\t{2}$ &$0.79$  &$1.7\t{1}$ &$0.34$\\
$1.3\times10^{-2}$&$1.8\t{1}$ &$0.38$   &$3.0\t{2}$ &$0.80$  &$1.3\t{1}$ &$0.34$\\
$6.3\times10^{-3}$&$1.4\t{1}$ &$0.36$   &$1.7\t{2}$ &$0.82$  &$1.1\t{1}$ &$0.33$\\
$3.1\times10^{-3}$&$1.1\t{1}$ &$0.35$   &$9.5\t{3}$ &$0.84$  &$8.5\t{2}$ &$0.33$\\
$1.6\times10^{-3}$&$8.5\t{2}$ &$0.34$   &$5.3\t{3}$ &$0.85$  &$6.7\t{2}$ &$0.33$\\
\hline
\end{tabular}
\caption{Rates of convergence for $f_1$ in dimension $n=3$.}
\label{f1n3}
\end{table}
\begin{table}[!t]
\centering
\begin{tabular}{|c|c|c|c|c|c|c|}
 \hline
 &\multicolumn{2}{c|}{(S1)}&\multicolumn{2}{c|}{(S2)}&\multicolumn{2}{c|}{(S3)}\\
\hline
Mesh size $h$& $\ell^\infty$ Error & Order& $\ell^\infty$ Error & Order& $\ell^\infty$ Error & Order\\
\hline
$2.5\times10^{-1}$&$1.1\t{0}$ &         &$3.8\t{1}$ &        &$4.9\t{1}$ & \\
$1.3\times10^{-1}$&$7.9\t{1}$ &$0.46$   &$2.4\t{1}$ &$0.68$  &$4.1\t{1}$ &$0.26$\\
$6.3\times10^{-2}$&$6.0\t{1}$ &$0.40$   &$1.5\t{1}$ &$0.64$  &$3.5\t{1}$ &$0.25$\\
$3.1\times10^{-2}$&$4.7\t{1}$ &$0.36$   &$9.5\t{2}$ &$0.70$  &$2.9\t{1}$ &$0.25$\\
$1.6\times10^{-2}$&$3.7\t{1}$ &$0.32$   &$5.7\t{2}$ &$0.72$  &$2.5\t{1}$ &$0.25$\\
$7.8\times10^{-3}$&$3.1\t{1}$ &$0.29$   &$3.4\t{2}$ &$0.75$  &$2.1\t{1}$ &$0.25$\\
\hline
\end{tabular}
\caption{Rates of convergence for $f_1$ in dimension $n=4$.}
\label{f1n4}
\end{table}

This is an interesting test case because $f_1$ is discontinuous, so Theorems \ref{thm:ratev} and \ref{thm:rate1} do not apply. We see that \eqref{S1} and \eqref{S3} have experimental convergence rates on the order of $O(h^\frac{1}{n})$. This should be expected, as $u_1$ and $w_1 := u_1/n(x_1\cdots x_n)^\frac{1}{n}$ are at most H\"older continuous with exponent $\frac{1}{n}$, and have similar gradient singularities where $u_1$ transitions from zero to a positive value. On the other hand, $v_1 := u_1^n/n^n$ is Lipschitz continuous on $[0,1]^n$, and we correspondingly observe a better convergence rate from \eqref{S2}. This illustrates the important fact that the transformation $v=u^n/n^n$ regularizes gradient singularities \emph{anywhere} in the domain $[0,1]^n$, and therefore we expect \eqref{S2} to have the best convergence rate in general. It is interesting to note that \eqref{S2} attains its formal convergence rate of $O(h)$ in dimension $n=2$, but appears to have a strictly worse rate in higher dimensions. 

The second case we consider is 
\[f_2(x) = \frac{1}{n^n(k+1)^n}\prod_{i=1}^n \left(\sum_{j=1}^n \sin(kx_j)^2 + nk + nkx_i\sin(2kx_i)\right),\]
where $k>0$. In the experiments, we set $k=20$. The solution of \eqref{P1} is
\[u_2(x) = \frac{1}{k+1}(x_1\cdots x_n)^\frac{1}{n}\left(\sum_{j=1}^n \sin(kx_j)^2 + nk\right).\]
The level sets of $u_2$ are shown in Figure \ref{fig:demo2}.
This case is interesting because the solution $u_2$ is smooth on $(0,1]^n$, so we expect to see the formal rates of convergence for each scheme. Tables \ref{f2n2}, \ref{f2n3} and \ref{f2n4} show the $\ell^\infty$ errors and orders of convergence in dimensions $n=2$, $n=3$, and $n=4$, respectively. As expected, schemes \eqref{S2} and \eqref{S3} show $O(h)$ rates, while the rate for \eqref{S1} appears to be approaching $O(h^\frac{1}{n})$ as $n$ increases.
\begin{table}[!t]
\centering
\begin{tabular}{|c|c|c|c|c|c|c|}
 \hline
 &\multicolumn{2}{c|}{(S1)}&\multicolumn{2}{c|}{(S2)}&\multicolumn{2}{c|}{(S3)}\\
\hline
Mesh size $h$& $\ell^\infty$ Error & Order& $\ell^\infty$ Error & Order& $\ell^\infty$ Error & Order\\
\hline
$2.5\times10^{-2}$&$9.5\t{2}$ &         &$2.4\t{2}$ &        &$2.4\t{2}$ & \\
$6.3\times10^{-3}$&$4.6\t{2}$ &$0.53$   &$6.1\t{3}$ &$0.99$  &$5.9\t{3}$ &$1.01$\\
$1.6\times10^{-3}$&$2.3\t{2}$ &$0.50$   &$1.6\t{3}$ &$0.97$  &$1.4\t{3}$ &$1.02$\\
$3.9\times10^{-4}$&$1.1\t{2}$ &$0.50$   &$4.1\t{4}$ &$0.98$  &$3.5\t{4}$ &$1.02$\\
$9.8\times10^{-5}$&$5.6\t{3}$ &$0.50$   &$1.0\t{4}$ &$0.99$  &$8.8\t{5}$ &$1.00$\\
$2.4\times10^{-5}$&$2.8\t{3}$ &$0.50$   &$2.6\t{5}$ &$1.00$  &$2.2\t{5}$ &$0.99$\\
\hline
\end{tabular}
\caption{Rates of convergence for $f_2$ in dimension $n=2$.}
\label{f2n2}
\end{table}
\begin{table}[!t]
\centering
\begin{tabular}{|c|c|c|c|c|c|c|}
 \hline
 &\multicolumn{2}{c|}{(S1)}&\multicolumn{2}{c|}{(S2)}&\multicolumn{2}{c|}{(S3)}\\
\hline
Mesh size $h$& $\ell^\infty$ Error & Order& $\ell^\infty$ Error & Order& $\ell^\infty$ Error & Order\\
\hline
$5\times10^{-2}$&$3.6\t{1}$ &         &$6.6\t{2}$ &        &$5.6\t{2}$ & \\
$2.5\times10^{-2}$&$2.8\t{1}$ &$0.39$   &$4.8\t{2}$ &$0.46$  &$4.0\t{2}$ &$0.48$\\
$1.3\times10^{-2}$&$2.2\t{1}$ &$0.36$   &$2.4\t{2}$ &$1.02$  &$2.0\t{2}$ &$1.01$\\
$6.3\times10^{-3}$&$1.7\t{1}$ &$0.35$   &$1.2\t{2}$ &$0.94$  &$1.0\t{2}$ &$0.96$\\
$3.1\times10^{-3}$&$1.3\t{1}$ &$0.34$   &$6.2\t{3}$ &$0.98$  &$5.3\t{3}$ &$0.97$\\
$1.6\times10^{-3}$&$1.1\t{1}$ &$0.34$   &$3.2\t{3}$ &$0.96$  &$2.7\t{3}$ &$0.96$\\
\hline
\end{tabular}
\caption{Rates of convergence for $f_2$ in dimension $n=3$.}
\label{f2n3}
\end{table}
\begin{table}[!t]
\centering
\begin{tabular}{|c|c|c|c|c|c|c|}
 \hline
 &\multicolumn{2}{c|}{(S1)}&\multicolumn{2}{c|}{(S2)}&\multicolumn{2}{c|}{(S3)}\\
\hline
Mesh size $h$& $\ell^\infty$ Error & Order& $\ell^\infty$ Error & Order& $\ell^\infty$ Error & Order\\
\hline
$2.5\times10^{-1}$&$1.6\t{0}$ &         &$4.0\t{1}$ &        &$3.1\t{1}$ & \\
$1.3\times10^{-1}$&$1.2\t{0}$ &$0.42$   &$3.9\t{1}$ &        &$3.8\t{1}$ &\\
$6.3\times10^{-2}$&$6.9\t{1}$ &$0.79$   &$1.4\t{1}$ &$1.49$  &$1.1\t{1}$ &$1.81$\\
$3.1\times10^{-2}$&$5.3\t{1}$ &$0.37$   &$7.2\t{2}$ &$0.93$  &$5.8\t{2}$ &$0.88$\\
$1.6\times10^{-2}$&$4.3\t{1}$ &$0.30$   &$3.7\t{2}$ &$0.94$  &$3.0\t{2}$ &$0.96$\\
$7.8\times10^{-3}$&$3.4\t{1}$ &$0.28$   &$1.9\t{2}$ &$0.98$  &$1.5\t{2}$ &$0.96$\\
\hline
\end{tabular}
\caption{Rates of convergence for $f_2$ in dimension $n=4$.}
\label{f2n4}
\end{table}

Finally, we consider a case where $f$ is Lipschitz, and $u$ has a gradient discontinuity, which is common in Hamilton-Jacobi equations due to crossing characteristics. In such a case, the solution $u$ is not smooth, and so it is unclear \emph{a priori} whether we will observe the formal or theoretical rates. An example in this setting is 
\[u_3(x) = n(x_1\cdots x_n)^\frac{1}{n}w_3(x),\]
where
\[w_3(x) =C\max\{x_1,\dots,x_n\} +  \sum_{j=1}^n x_j.\]
The level sets of $u_3$ are shown in Figure \ref{fig:demo3}.
The corresponding right hand side of \eqref{P1} is
\[f_3(x) = \frac{1}{(C+n)^n}\left(w_3(x)  + n(1+C)x(n)\right)\prod_{i=1}^{n-1}\left( w_3(x) + nx(i)\right).\]
where $x(i) = x_{\pi_x(i)}$ for a permutation $\pi_x$ such that $x(1)\leq x(2) \leq\cdots \leq x(n)$. We set $C=10$ in the experiments. Tables \ref{f3n2}, \ref{f3n3} and \ref{f3n4} show the $\ell^\infty$ errors and orders of convergence in dimensions $n=2$, $n=3$, and $n=4$, respectively. We note that the formal rates of convergence are observed in this case for all schemes. 
\begin{table}[!t]
\centering
\begin{tabular}{|c|c|c|c|c|c|c|}
 \hline
 &\multicolumn{2}{c|}{(S1)}&\multicolumn{2}{c|}{(S2)}&\multicolumn{2}{c|}{(S3)}\\
\hline
Mesh size $h$& $\ell^\infty$ Error & Order& $\ell^\infty$ Error & Order& $\ell^\infty$ Error & Order\\
\hline
$2.5\times10^{-2}$&$8.3\t{2}$ &         &$7.5\t{2}$ &        &$3.1\t{2}$ & \\
$6.3\times10^{-3}$&$4.2\t{2}$ &$0.49$   &$1.9\t{2}$ &$1.00$  &$8.0\t{3}$ &$0.98$\\
$1.6\times10^{-3}$&$2.1\t{2}$ &$0.50$   &$4.7\t{3}$ &$1.00$  &$2.0\t{3}$ &$1.00$\\
$3.9\times10^{-4}$&$1.1\t{2}$ &$0.50$   &$1.2\t{3}$ &$1.00$  &$5.0\t{4}$ &$1.00$\\
$9.8\times10^{-5}$&$5.3\t{3}$ &$0.50$   &$2.9\t{4}$ &$1.00$  &$1.3\t{4}$ &$1.00$\\
$2.4\times10^{-5}$&$2.7\t{3}$ &$0.50$   &$7.4\t{5}$ &$1.00$  &$3.1\t{5}$ &$1.00$\\
\hline
\end{tabular}
\caption{Rates of convergence for $f_3$ in dimension $n=2$.}
\label{f3n2}
\end{table}

\begin{table}[!t]
\centering
\begin{tabular}{|c|c|c|c|c|c|c|}
 \hline
 &\multicolumn{2}{c|}{(S1)}&\multicolumn{2}{c|}{(S2)}&\multicolumn{2}{c|}{(S3)}\\
\hline
Mesh size $h$& $\ell^\infty$ Error & Order& $\ell^\infty$ Error & Order& $\ell^\infty$ Error & Order\\
\hline
$5\times10^{-2}$&$3.0\t{1}$ &         &$2.6\t{1}$ &        &$1.3\t{1}$ & \\
$2.5\times10^{-2}$&$2.5\t{1}$ &$0.31$   &$1.3\t{1}$ &$0.96$  &$6.8\t{2}$ &$0.95$\\
$1.3\times10^{-2}$&$2.0\t{1}$ &$0.32$   &$6.7\t{2}$ &$0.99$  &$3.5\t{2}$ &$0.97$\\
$6.3\times10^{-3}$&$1.6\t{1}$ &$0.33$   &$3.3\t{2}$ &$0.99$  &$1.8\t{2}$ &$0.98$\\
$3.1\times10^{-3}$&$1.2\t{1}$ &$0.33$   &$1.7\t{2}$ &$1.00$  &$8.8\t{3}$ &$0.99$\\
$1.6\times10^{-3}$&$9.9\t{2}$ &$0.33$   &$8.4\t{3}$ &$1.00$  &$4.4\t{3}$ &$1.00$\\
\hline
\end{tabular}
\caption{Rates of convergence for $f_3$ in dimension $n=3$.}
\label{f3n3}
\end{table}
\begin{table}[!t]
\centering
\begin{tabular}{|c|c|c|c|c|c|c|}
 \hline
 &\multicolumn{2}{c|}{(S1)}&\multicolumn{2}{c|}{(S2)}&\multicolumn{2}{c|}{(S3)}\\
\hline
Mesh size $h$& $\ell^\infty$ Error & Order& $\ell^\infty$ Error & Order& $\ell^\infty$ Error & Order\\
\hline
$2.5\times10^{-1}$&$8.5\t{1}$ &         &$1.4\t{0}$ &        &$6.3\t{1}$ & \\
$1.3\times10^{-1}$&$6.6\t{1}$ &$0.37$   &$7.7\t{1}$ &$0.84$  &$3.8\t{1}$ &$0.73$\\
$6.3\times10^{-2}$&$5.5\t{1}$ &$0.26$   &$4.1\t{1}$ &$0.89$  &$2.1\t{1}$ &$0.86$\\
$3.1\times10^{-2}$&$4.6\t{1}$ &$0.25$   &$2.2\t{1}$ &$0.94$  &$1.1\t{1}$ &$0.91$\\
$1.6\times10^{-2}$&$3.9\t{1}$ &$0.25$   &$1.1\t{1}$ &$0.98$  &$5.6\t{2}$ &$0.96$\\
$7.8\times10^{-3}$&$3.2\t{1}$ &$0.25$   &$5.5\t{2}$ &$0.99$  &$2.8\t{2}$ &$0.99$\\
\hline
\end{tabular}
\caption{Rates of convergence for $f_3$ in dimension $n=4$.}
\label{f3n4}
\end{table}

\section*{Acknowledgments}
The author is grateful to Selim Esedo\=glu and Lawrence C.~Evans for valuable discussions about this work.
\appendix

\section{Properties of finite differences}
We give the proof of Lemma \ref{lem:prod-rule} here.
\begin{proof}
We proceed by induction. The base case of $N=2$ is exactly Proposition \ref{prop:finite-diff-properties} (ii). Assume \eqref{eq:prod-rule} holds for some $N\geq 2$. Then by the inductive hypothesis and base case
\begin{align}\label{eq:prod1}
D^\pm_k(u_1\cdots u_N u_{N+1}) &= (u_1 \cdots u_N) D^\pm_k u_{N+1} + u_{N+1}D^\pm_k (u_1\dots u_N) \pm hD^\pm_k (u_1\cdots u_N) D^\pm_k u_{N+1}\notag\\
& = \sum_{j=1}^{N+1} D^\pm_k u_j\prod_{i\neq j} u_i  + u_{N+1}\sum_{j=2}^N (\pm h)^{j-1} \sum_{|I|=j} \prod_{i \in I} D_k^\pm u_i \prod_{i\not\in I} u_i\notag \\
&\hspace{1.5in} \pm hD^\pm_k (u_1\cdots u_N) D^\pm_k u_{N+1}.
\end{align}
Notice that
\begin{align}\label{eq:prod2}
\pm hD^\pm_k& (u_1\cdots u_N) D^\pm_k u_{N+1}\notag \\
&= \pm h D^\pm_k u_{N+1}\left(\sum_{j=1}^N D^\pm_k u_j\prod_{i\neq j} u_i  + \sum_{j=2}^N (\pm h)^{j-1} \sum_{|I|=j} \prod_{i \in I} D_k^\pm u_i \prod_{i\not\in I} u_i  \right)\notag \\
&= \sum_{j=1}^N \pm h D^\pm_k u_{N+1}D^\pm_k u_j\prod_{i\neq j} u_i  + \sum_{j=2}^N (\pm h)^j \sum_{|I|=j} D^\pm_k u_{N+1}\prod_{i \in I} D_k^\pm u_i \prod_{i\not\in I} u_i\notag \\
&= \sum_{j=2}^{N+1} (\pm h)^{j-1} \sum_{\substack{|I|=j \\ N+1 \in I}} \prod_{i \in I} D_k^\pm u_i \prod_{i\not\in I} u_i,
\end{align}
where in the final summation, $I \subset \{1,\dots,N+1\}$. 
We also have
\[u_{N+1}\sum_{j=2}^N (\pm h)^{j-1} \sum_{|I|=j} \prod_{i \in I} D_k^\pm u_i \prod_{i\not\in I} u_i = \sum_{j=2}^{N+1} (\pm h)^{j-1} \sum_{\substack{|I|=j \\ N+1 \not\in I}} \prod_{i \in I} D_k^\pm u_i \prod_{i\not\in I} u_i.\]
Combining this with \eqref{eq:prod1} and \eqref{eq:prod2} we have
\[D^\pm_k(u_1\cdots u_N u_{N+1}) = \sum_{j=1}^{N+1}D^\pm_k u_j \prod_{i\neq j} u_i+ \sum_{j=2}^{N+1} (\pm h)^{j-1} \sum_{|I|=j} \prod_{i \in I} D_k^\pm u_i \prod_{i\not\in I} u_i.\]
This verifies \eqref{eq:prod-rule} for $N+1$. The proof is completed by mathematical induction.
\end{proof}

\end{document}